\documentclass[a4paper]{article}

\usepackage[utf8x]{inputenc}
\usepackage{amsmath, amsfonts}

\usepackage[a4paper,top=3cm,bottom=2cm,left=2.5cm,right=2.5cm,marginparwidth=1.5cm]{geometry}

\usepackage{amsmath} 
\usepackage{graphicx} 
\usepackage{pdfpages}
\usepackage[colorlinks=true, allcolors=blue]{hyperref} 
\usepackage{xcolor}
\usepackage{authblk}
\usepackage[shortlabels]{enumitem}
\usepackage{ulem}

\graphicspath{ {./figures/} }

\newtheorem{thm}{Theorem}[section]
\newtheorem{lemma}[thm]{Lemma}
\newtheorem{prop}[thm]{Proposition}

\newtheorem{rem}[thm]{Remark}

\newcommand{\Z}{\ensuremath{\mathbb{Z}}}
\newcommand{\R}{\ensuremath{\mathbb{R}}}
\def\C{{\mathbb{C}}}

\def\cC{{\cal{C}}}
\def\C{{\mathbb{C}}}

\def\Re{{\textrm{Re}}}
\def\Im{{\textrm{Im}}}

\def\cC{{\it{C}}}
\def\Id{\rm{Id}}
\def\r{{\rm{r}}}

\def\Id{\mathrm{Id}}

\newcommand{\X}[3]{#1_{#3,#2}}
\newcommand{\Xe}[3]{#1^*_{#3,#2}}

\numberwithin{equation}{section}
\newenvironment{pf}{\vspace{0.5em}\noindent\textbf{Proof.} }{\quad \hfill $\Box$ \\ \vspace{0.2em}\\}

\title{On the rate of convergence to steady state\\ in a linear chromatography model}
\author[1]{Joaqu\'{\i}n Menacho}
\author[2]{Marta Pellicer}
\author[3]{J. Sol\`a-Morales}
\affil[1]{IQS School of Engineering, Universitat Ramon Llull, Via Augusta, 390, 08017 Barcelona, Catalunya, Spain, joaquin.menacho@iqs.edu}
\affil[2]{Departament d'Inform\`atica, Matem\`atica Aplicada i Estad\'istica, Universitat de Girona, C. Maria Aur\`elia Capmany, Girona, Catalunya, Spain, marta.pellicer@udg.edu}
\affil[3]{Departament de Matem\`atiques, Universitat Polit\`ecnica de Catalunya, Av. Diagonal
647, 08028 Barcelona, Catalunya, Spain, jc.sola-morales@upc.edu}

\begin{document}

\maketitle
\begin{abstract}
We study the rate of convergence to the steady state in the True Moving Bed model of linear chromatography, as a function of the six parameters that appear in the model. The model is a system of eight linear partial differential equations of hyperbolic type, coupled through the equations themselves and also through boundary conditions. We prove that the rate of convergence is given by a dominant eigenvalue, whose existence we prove by means of the Krein-Rutman Theorem, and by comparison arguments. We show how to construct a (not at all simple) characteristic function, whose roots are the eigenvalues. We also study the asymptotic profile of the solutions for large times, although this part is not purely analytical, but a combination of analytical and numerical techniques. Beyond the theoretical results, these models also offer explicit quantitative information: we apply all our results to a Case Study, namely the separation of omeprazole enantiomers. Finally, we consider a simpler limit case, where all the calculations become explicit.
\end{abstract}


\section{Introduction and summary}

Chromatography is one of the common techniques used in industry to separate the components of a mixture, taking advantage of their different affinities for the surface of an adsorbent. If the mixture passes in the liquid phase through a column containing a bed of adsorbent particles, each component of the mixture will advance at a different speed due to greater or lesser adsorption. Chromatographic separation takes advantage of these different velocities to separate the components.

In industry, the Simulated Moving Bed (SMB) technology is very often used to perform separations in continuous processes. The device consists of a series of cyclically connected chromatographic columns through which the mixture dissolved in a fluid phase circulates. Four ports are connected at certain points between the columns, delimiting four zones. At two ports, called raffinate and extract ports, part of the circulating liquid is extracted, while at the other two ports, the mixture (feed port) and the pure solvent (desorbent port) are injected. At regular time intervals, the four ports are moved one column in the direction of the liquid stream through a valve system, as can be seen in Figure \ref{F1} (left). This movement of the ports is essentially equivalent to a displacement of the solid phase in the opposite direction, thus simulating the countercurrent movement of the chromatographic bed, as described in M. Schulte et al. \cite{Schmidt-Traub5}, for example.

Although not implemented in industrial practice, the ideal countercurrent separator model is useful for analysing the behaviour of SMBs, since there is a correspondence between both systems (see Z.Ma, N.-H.L. Wang \cite{Ma-Wang}, A. Susanto et al. \cite{Schmidt-Traub7}, and V. Grosfils et al. \cite{Grosfils}). This ideal model, usually called True Moving Bed (TMB), consists of the liquid phase circulation with the mixture along a circular conduit while the solid phase circulates in the opposite direction (see Figure \ref{F1}, right). Four ports —with the same functions as those of an SMB— are located at four fixed locations. The TMB has a steady state, while the SMB only reaches a periodic state after a sufficiently long time. This makes solving a model of the TMB much simpler than that of the SMB. Therefore, analysing the TMB model is useful for designing and optimizing the operating parameters of an SMB, as, for instance, in I. Arrieta et al. \cite{Arrieta}, W. Jeong et al. \cite{Jeong}, and H.-J. Kang et al. \cite{Kang}.

\begin{figure}[htpb!]
  \centering
  \includegraphics[scale=0.4]{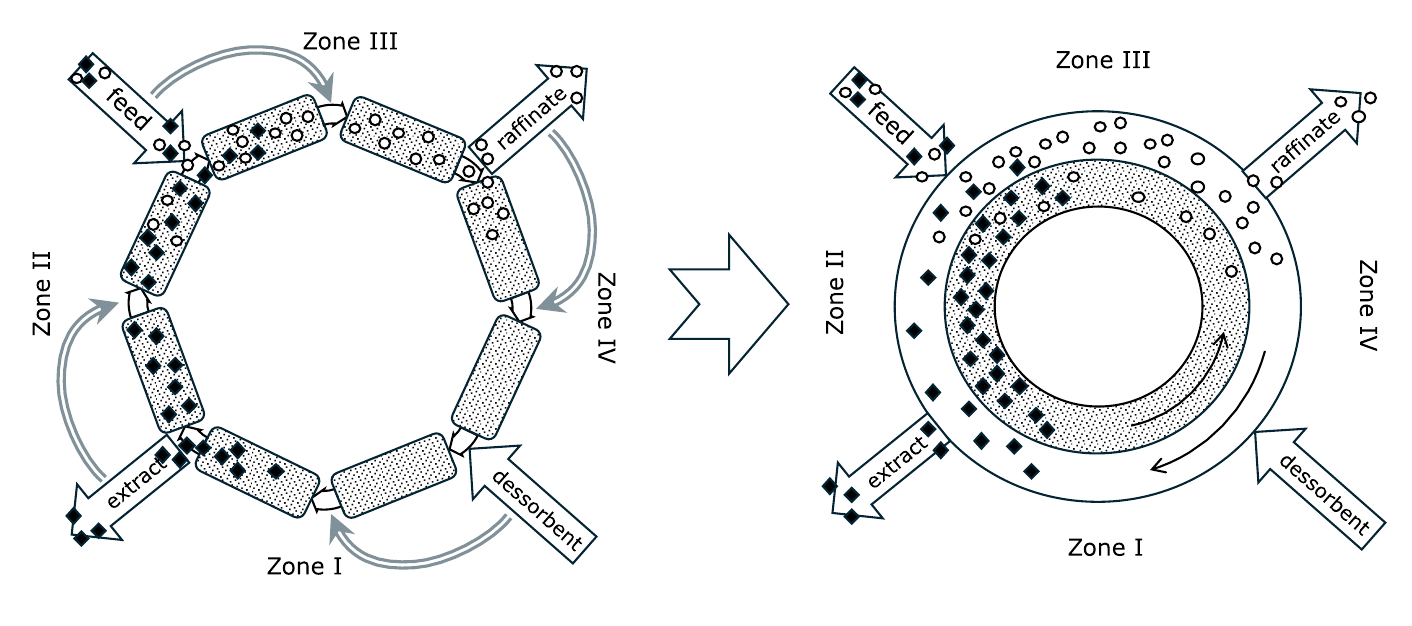}\\
  \caption{Left: scheme of an SMB. Right: scheme of a TMB model. The circles represent the less adsorbable component, while the squares represent the more adsorbable one. The arrows on the left figure represent the movement of the ports at regular time intervals, and in the right figure, the direction of circulation of the liquid and solid phases.}\label{F1}
\end{figure}

\subsection{The equations}
The simplest model of countercurrent linear chromatography is (see G. Guiochon et al. \cite{Guiochon2003}, R. Aris and N.R. Amundson \cite{ArisAmundson1973}, \cite{Grosfils}, or J.P. Aniceto and C.M. Silva \cite{Aniceto})
\begin{equation}\label{sysbasic}
\begin{cases}
    c_t+u c_x=D c_{xx}-Fk(Hc-q) \\
    q_t-u_s q_x=k(Hc-q)
\end{cases}
\end{equation}
where $H$ is the equilibrium constant, $F=(1-\epsilon)/\epsilon$ is the phase volume ratio ($\epsilon$ being the column void fraction), $u$ and $u_s$ are the fluid and solid phase velocities, $D$ is the diffusion coefficient and $k \hspace{1mm} (s^{-1})$ is a kinetic constant. Here we will consider the model without the diffusion term, that is, $D \approx 0$. The unknowns $c=c(x,t)$ and $q=q(x,t)$ are the concentrations of the solute in the liquid and solid phases, respectively.

For a four-zone TMB model, $L$ being the length of each zone, the model \eqref{sysbasic} stands for every zone, each one with a different liquid phase velocity $u_i, i=1, 2, 3, 4$ and with the following continuity boundary conditions:
\begin{equation}\label{bcbasic}
\begin{cases}
    q(x_i^{-})=q(x_i^{+}), x_i=-2L^{+}, 2L^{-}, \pm L, 0\\ c(x_i^{-})=c(x_i^{+}), x_i=\pm L \\ u_1 c(-2L^{+})=u_4c(2L^{-}) \\ f_{in}+u_2 c(0^{-})=u_3c(0^{+})
\end{cases}
\end{equation}
where $f_{in}=c_{feed} Q_{feed}$ is the solute mass flow injected with a concentration $c_{feed}$. The inequalities $u_1>u_2$ and $u_3>u_4$ will be adopted, and they mean that $x=\mp L$ are extraction ports. Also for the inequalities $u_2<u_3$ and $u_4<u_1$, that mean that $x=0$ and $x=\pm 2L$ are injection ports.

Let us now define dimensionless variables as
\begin{equation}\label{dimless}
x'=x / L,\quad c'=\sqrt{H/F}\frac{c}{c_{feed}},\quad q'=\frac{q}{c_{feed}},\quad t'=\frac{u_s}{L}t
\end{equation}
and the system \eqref{sysbasic} becomes (without the primes)
\begin{equation}\label{eqn0t}
\begin{cases}
    c_t+v_i c_x=R(-P^2 c+Pq) \\ q_t-q_x=R(Pc-q),
\end{cases}
\end{equation}
for $x\in I_i=(x_i, x_{i+1})$, $i=1,2,3,4$ and $x_i=-3+i$, where $v_i=u_i/u_s$, $R=kL/u_s$ and $P=\sqrt{FH}$, with the continuity conditions \eqref{bcbasic} that now become
\begin{equation}\label{bcn0t}
\begin{cases}
    q(x_i^{-})=q(x_i^{+}), x_i=-2^{+}, 2^{-}, \pm 1, 0\\ c(x_i^{-})=c(x_i^{+}), x_i=\pm 1 \\ v_1 c(-2^{+})=v_4 c(2^{-}) \\ f_0+v_2 c(0^{-})=v_3 c(0^{+})
\end{cases}
\end{equation}
where $f_0=\sqrt{H/F} \hspace{1mm}Q_{feed}/(u_sL^2)$ is the dimensionless inflow.

The above inequalities between the $u_i$'s now take the form
\begin{equation}\label{ineq}
v_1>v_2>0, \ v_2<v_3, \ v_3>v_4>0 \text{ and } v_4<v_1.
\end{equation}
These inequalities represent the fact that ports $1$ and $3$ ($x=-1$, $x=1$, respectively) are extraction ports, and ports $2$ and $4$ ($x=0$, and either $x=2$ or $x=-2$, respectively) are injection ones. Throughout the paper, there will be situations in which these inequalities can be equalities. That is, $v_1=v_2$ and $v_3=v_4$. Then, \eqref{ineq} would imply that $v_i=v>0$ for $i=1,2,3,4$. We will explicitly indicate when this special situation applies. Moreover, in some cases, we can consider these strict inequalities as non-strict ones: we will also specify this when it applies.

The situation described by \eqref{eqn0t}, \eqref{bcn0t}, and \eqref{ineq} is illustrated in the following figure.

\begin{figure}[htpb]
  \centering
\includegraphics[width=13.5cm]{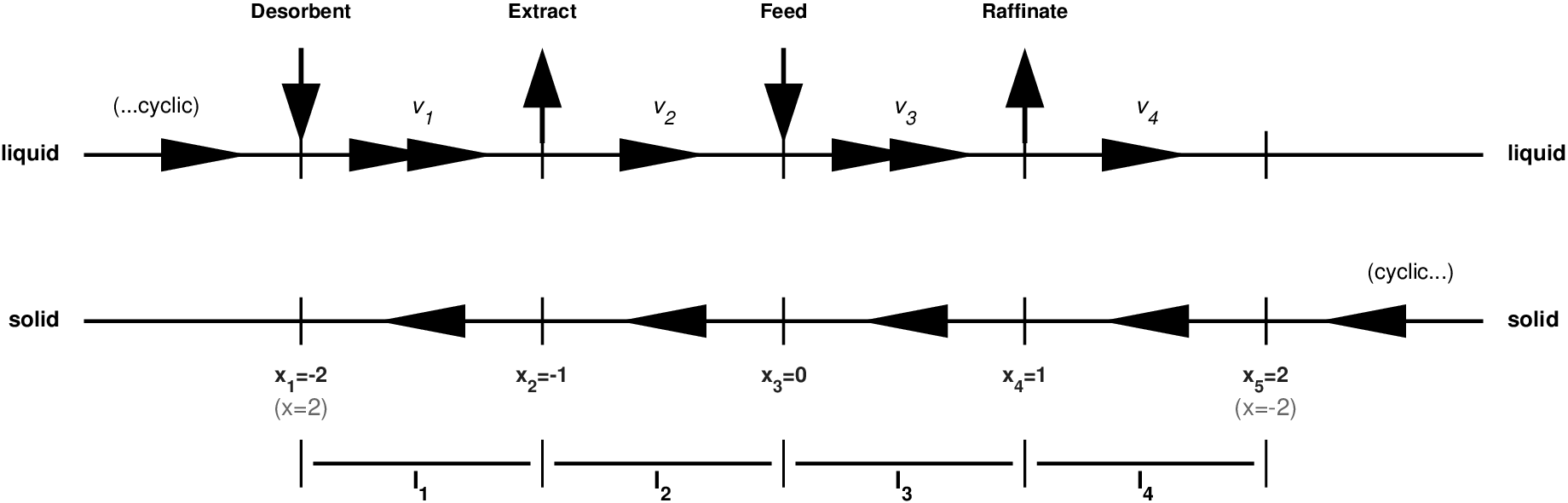}
\caption{Scheme of the situation described by \eqref{eqn0t}, \eqref{bcn0t}, and \eqref{ineq}.}\label{AbstractStright}
 \end{figure}

\begin{rem}
In the limit case of the inequalities \eqref{ineq}, namely when $v_i=v>0,\ i=1,2,3,4$, the contour conditions \eqref{bcn0t} become simply continuity conditions for $c$ and $q$ at all the boundaries of each interval $I_i$. This is a case of not much practical interest, but it is the unique case where we have been able to find an explicit and complete picture of the eigenvalues. It will be studied in Section \ref{limit}.
\end{rem}

The last problem \eqref{eqn0t} and \eqref{bcn0t}, with the inequalities \eqref{ineq}, is the dimensionless TMB model of a four-zone chromatography device. Due to its linearity, the perturbations of a steady-state obey the homogeneous case, that is, when there is no injection of solute ($f_0=0$). In this paper, we are mainly studying this homogeneous case. In the case of several different solutes, which is often the case of practical interest, if there is no interaction at the level of adsorption, due to linearity, it is enough to study the evolution of each substance independently.

In the previous work of J. Menacho and J. Sol\`a-Morales \cite{Menacho-SM}, some of the authors of this paper proved the exponential convergence to a unique steady state of this system. By means of asymptotic analysis, an approximation of the steady-state concentration profiles was found, for the cases of low mass transfer resistance -that is, $k$ (or $R$) large. In the present work, we are interested in the rate of convergence to the steady state and the long-time profile of the limiting concentrations.

This rate is directly related to the duration of the transient states of an SMB. Except for some notable works (C.B. Ching et al. \cite{Ching1991}, I.B.R. Nogueira et al.\cite{Nogueira2017}, and T.H. Oh et al. \cite{Oh2019}), the study of these transients has so far received not much attention in published research.

More recently, the authors of the present paper studied a simpler system very similar to \eqref{sysbasic} that arises mostly in biomathematics when considering a correlated random walk model in J. Menacho et al. \cite{Menacho-Pell-SM}. There, the model only considers one zone, with a unique velocity $v$  in both directions,  with homogeneous Dirichlet boundary conditions at the inflow points. The equality of the velocities in the two directions produces a dimensionless form of the problem with only three parameters ($F, K$, and $S=v/(KL)$) instead of $R, P$ and four $v_i$. For this correlated random walk case, the authors describe the spectrum of the operator, with relatively simple formulas to find the so-called dominant eigenvalue and its corresponding eigenfunction, and prove that they give the optimal rate of convergence to the equilibrium, and the asymptotic convergence profile, respectively.

It is worth writing here the equations for the eigenvalues $\lambda$ and eigenfunctions $(\tilde{c}(x), \tilde{q}(x))^T$ of problem \eqref{eqn0t}-\eqref{bcn0t}, with $f_0=0$. They correspond to solutions of \eqref{eqn0} of the form $c(t,x)=e^{\lambda t}\tilde{c}(x)$, $q(t,x)=e^{\lambda t}\tilde{q}(x)$. For simplicity of notation we write $c$ instead of $\tilde{c}(x)$ and $q$ instead of $\tilde{q}(x)$:
\begin{equation}\label{eqn0}
\left\{ \begin{array}{rll}
\lambda c+v_i c_x &= RP(-Pc+q)\quad &\hbox{for}\: x\in I_i, \; i=1,\dots 4, \\
\lambda q-q_x &= R(Pc-q)\quad &\hbox{for}\: x\in I_i, \; i=1,\dots 4,
\end{array}\right.
\end{equation}
with the same previous boundary conditions, in the homogeneous and time-independent case:
\begin{equation}\label{bcn0}
\left\{ \begin{array}{lll}
  &c(x_i^-)=c(x_i^+) & \hbox{for}\: x_i=-1,1,\\
  &v_1c(-2^+)=v_4c(2^-),&\\
  &v_3c(0^+)= v_2c(0^-),& \\
  &q(x_i^-)=q(x_i^+) & \hbox{for}\: x_i=-1,0,1,\pm 2 \hbox{. ($2^+$ means $-2^-$)}.
\end{array}\right.
\end{equation}
We look for non-trivial solutions of \eqref{eqn0}-\eqref{bcn0}, real or complex. But our main purpose is to find a real solution $(c_0(x)>0,q_0(x)>0)^T$, with $\lambda=\lambda_0<0$ (principal or dominant eigenvalue), that we prove that exists. We note that for the eigenfunctions, $c(x)$ will be discontinuous at $x=0, \pm 2$ and $q(x)$ will be continuous, but not necessarily of class $\mathcal{C}^1$.

\subsection{Summary of results}
In this section, we present a summary of the main results of the present work, together with a justification of the techniques that we have used to prove them.

Section \ref{sec:functional} is devoted to the functional framework of the evolution problem \eqref{eqn0t}-\eqref{bcn0t} with $f_0=0$. This problem can be written in the abstract form
\begin{equation}\label{abs}
(c_t,q_t)^T = A(c,q)^T ,\ t\in[0,+\infty)
\end{equation}
for a suitable unbounded operator on a Hilbert space $X$, of $L^2$ type, defined only on a certain domain $D(A)$ made of functions $(c,q)^T$ that are continuous except at the jumps of $c(x)$ at $x=0,\pm2$. Then, the eigenvalue problem \eqref{eqn0}-\eqref{bcn0} can be written in the abstract form as $A(c,q)^T=\lambda (c,q)^T$.

We understand as one of our main achievements the writing of the eigenvalue problem  \eqref{eqn0}-\eqref{bcn0} as a single scalar equation of the form $\Delta(\lambda)=0$ (see \eqref{eq:f}), which deserves to be called the characteristic equation, and where $\lambda$ can be real or complex. This characteristic equation is constructed through a Return or Monodromy map, that is, a linear $2\times 2$ matrix $C(\lambda)$, and the equation $\Delta(\lambda)=0$ is equivalent to asking the matrix $C(\lambda)$ to have $1$ as one of its eigenvalues. This will be presented in Section \ref{sec:carac}.

The complicated form of the characteristic equation and of the description of the eigenfunctions is one of the reasons for the use of functional analytic techniques, such as the theory of Positive Operators and the Krein-Rutman Theorem, even to prove that the equation $\Delta(\lambda)=0$ has at least one solution. Section \ref{posit} is devoted to giving the tools and proof of the main theorem of the present paper, that is, Theorem \ref{thfa} below.
\begin{thm}\label{thfa}\textbf{(Large time behaviour of the solutions)}
We consider the operator $A$ of the evolution problem \eqref{abs} (that is, the abstract form of problem \eqref{eqn0t}-\eqref{bcn0t} with $f_0=0$). Then, the following assertions are true.
\begin{enumerate}[(i)]
\item If the inequalities \eqref{ineq} are satisfied, there exists a (unique) largest real eigenvalue of $A$, $\lambda_0\in\mathbb{R}$, $\lambda_0<0$.
Moreover, if $\lambda$ is any eigenvalue of $A$, then $Re(\lambda)\leq \lambda_0$. This $\lambda_0$ has a strictly positive eigenfunction $(c_0(x),q_0(x))^T$, in the sense that $c_0(x)>0$ and $q_0(x)>0$ for all $x\in[-2,2]$ . It also satisfies the (non-optimal) bound $-M_0\le\lambda_0<0,$ where $M_0$ is the following positive quantity
\begin{equation}\label{eps0}
M_0: = R-\dfrac{ R^2P^2v^{min}}{v^{min} \left( \frac{v^{max}-v^{min}}{2}\right)+v^{max}\left(RP^2+1\right)} >0
\end{equation}
where $v_{max}:=\max\{v_1,v_2,v_3, v_4\}$ and $v_{min}:=\min\{v_1,v_2,v_3,v_4\}$.
\item If $(c(x,t),q(x,t))^T=e^{tA}(c(x,0),q(x,0))^T$ with a (a.e.) bounded initial condition $(c(x,0),q(x,0))^T$, then there exist a $K>0$, depending only on the initial condition, such that
\begin{equation}\label{final}
 -K e^{t\lambda_0}\left(|c_0(x)|,|q_0(x)|\right)^T \leq \left(c(x,t),q(x,t)\right)^T\le K e^{t\lambda_0}\left(|c_0(x)|,|q_0(x)|\right)^T
\end{equation}
for all $t\ge 0$ and a.e. in $x$. If $(c,q)^T$ is a non-real solution, then the same previous inequality holds for the real and imaginary parts of the solution. The number $K$ can be simply chosen as the smallest real number for which the inequalities \eqref{final} hold at $t=0$.

\item In the limit case $v_1=v_2$ and $v_3=v_4$, the largest real eigenvalue is $\lambda_0=0$. The corresponding solutions also satisfy inequality \eqref{final} (with $\lambda_0=0$).
\end{enumerate}
 \end{thm}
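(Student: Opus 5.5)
The plan is to combine positivity of the semigroup with compactness, apply the Krein--Rutman Theorem, and then use comparison with the positive eigenfunction.

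\textbf{Positivity and compactness.} The system \eqref{eqn0t} is a linear hyperbolic system with cooperative coupling: the off-diagonal terms $RPq$ and $RPc$ are nonnegative. The boundary conditions \eqref{bcn0t} with $f_0=0$ transmit nonnegative data with nonnegative coefficients $v_4/v_1$ and $v_2/v_3$. Integrating along characteristics therefore shows that the semigroup $e^{tA}$ is positive on $X$, and that it is order preserving on $L^\infty$ data. Since everything is transported around the loop $[-2,2]$, some power of the semigroup, or the resolvent $(\mu-A)^{-1}$ for large real $\mu$, will be shown to be irreducible: a positive datum becomes strictly positive in both components everywhere. This uses that $c$ travels right, $q$ travels left, and they exchange mass at rate $R$. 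Compactness is the delicate part, because pure transport is not compact. I would instead work with the Return (Monodromy) map $C(\lambda)$ of Section~\ref{sec:carac}. For real $\lambda$ this is a $2\times2$ matrix with positive entries, and $\Delta(\lambda)=0$ means that $C(\lambda)$ has eigenvalue $1$. Perron--Frobenius then gives a positive eigenvector, provided the Perron root $r(\lambda)$ equals $1$. This finite-dimensional substitute for Krein--Rutman avoids the compactness issue. To find $\lambda_0$, I would show that $r(\lambda)$ is continuous and strictly decreasing in real $\lambda$, tends to $+\infty$ as $\lambda\to-\infty$ (or at least exceeds $1$ at $\lambda=-M_0$), and is $<1$ at $\lambda=0$ when \eqref{ineq} holds strictly, because of the loss at the extraction ports. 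Uniqueness of $\lambda_0$ then follows from the strict monotonicity.

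\textbf{Bounds.} The bound $\lambda_0<0$ comes from mass loss. Integrating $c+q$ over $[-2,2]$ for a positive eigenfunction gives $\lambda_0\int (c_0+q_0)$ equal to the negative outflow $(v_2-v_1)c_0(-1)+(v_4-v_3)c_0(1)$. For the lower bound $-M_0$, I would build an explicit positive subsolution: piecewise constant $(\alpha,\beta)$ satisfying $\lambda c+v_ic_x\ge\cdots$ on each zone, with $\alpha$ chosen using $v^{max}$ and $v^{min}$ to absorb the jumps. A comparison argument then gives $r(-M_0)\ge1$. The algebra produces the rational expression \eqref{eps0}.

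\textbf{Dominance and (ii).} Let $\lambda$ be any eigenvalue with eigenfunction $\phi$. Then $|\phi|$ satisfies the eigenvalue inequality with $\Re\lambda$, by Kato's inequality along characteristics and positivity of the coupling. Hence $r(\Re\lambda)\ge1$, which gives $\Re\lambda\le\lambda_0$. For (ii), the function $Ke^{\lambda_0 t}(c_0,q_0)$ is an exact solution. Choose $K$ with $|(c,q)(\cdot,0)|\le K(c_0,q_0)$; this is possible because $c_0,q_0$ are bounded below by positive constants and the data are bounded. Positivity of the semigroup applied to $K e^{\lambda_0t}(c_0,q_0)\mp(c,q)$ then yields \eqref{final}, and for complex data one applies it to the real and imaginary parts separately. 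For (iii), when all $v_i=v$ there is no loss, the constant profile $(1,P)$ is a stationary solution, and $\lambda_0=0$; the same comparison argument gives \eqref{final}.

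\textbf{Main obstacle.} I expect the main obstacle to be the rigorous positivity, irreducibility and monotonicity of the Perron root of $C(\lambda)$ in $\lambda$, together with the link between the spectrum of the unbounded operator $A$ and the roots of $\Delta$.
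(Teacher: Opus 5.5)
\textbf{The central step fails: the Return map $C(\lambda)$ is not a positive matrix, so Perron--Frobenius cannot replace Krein--Rutman here.}

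In the $x$-ODE form \eqref{eqn1v}, the $q$-equation is integrated against the direction in which $q$ is transported. This puts $-RP<0$ in position $(2,1)$ of $F_i(\lambda)$, while position $(1,2)$ is $RP/v_i>0$. By Cayley--Hamilton, $e^{F}=\alpha I+\beta F$ for any real $2\times 2$ matrix, so the off-diagonal entries of each $e^{F_i(\lambda)}$ have opposite signs (or vanish). Compare $RP\sinh b_i/v_i$ with $-RP\sinh b_i$ in \eqref{eq:Mi3}, or the sign-changing $\sin b_i$ terms when the eigenvalues of $F_i(\lambda)$ are non-real, as in zones 2 and 4 of the case study. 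In the equal-velocity case, $C(\lambda)=e^{4F(\lambda)}$ itself has this sign pattern. So even a positive eigenvector $(c,q)(-1)$ of $C(\lambda)$ would not propagate to a positive eigenfunction.

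Moreover, the spectral radius of $C(\lambda)$ is not decreasing in $\lambda$:
\begin{itemize}
\item by \eqref{traceplusinfty}, ${\rm trace}\,C(\lambda)\sim e^{4\lambda}$, so it blows up as $\lambda\to+\infty$, not only as $\lambda\to-\infty$;
\item for equal velocities, $C(0)=e^{4F(0)}$ has eigenvalues $1$ and $e^{4R(v-P^2)/v}$, and the latter exceeds $1$ when $v>P^2$. By continuity the same holds for nearby strictly dissipative velocities.
\end{itemize}
Thus the chain ``$r(\lambda)$ decreasing, $r(0)<1$, $r(-M_0)\ge 1$'' is false. With it go your existence and uniqueness of $\lambda_0$, the positivity of $(c_0,q_0)$, the bound $-M_0$, and the dominance step, which you derive from $r(\Re(\lambda))\ge 1$.

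\textbf{The missing idea is that compactness is available for the resolvent, even though not for $e^{tA}$.} The operator $(I-A)^{-1}$ maps $X$ boundedly into $D(A)$ with the piecewise $H^1$ norm, hence is compact on $X$. This also settles your worry about $\sigma(A)$ versus the zeros of $\Delta$, since $\sigma(A)$ then consists of eigenvalues only.

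The paper's route is as follows.
\begin{enumerate}
\item Positivity of $(sI-A)^{-1}$ comes from a maximum-principle argument; your characteristics argument would also serve.
\item $\mathrm{r}((I-A)^{-1})>0$, quantitatively, comes from a positive test function $(c^p,q^p)\in D(A)$. Here $c^p$ is piecewise linear with the prescribed jumps and values in $[v^{min},v^{max}]$, and $q^p\equiv Q^p$ is a large constant. A piecewise constant $c$ cannot satisfy the jump conditions unless $v_1v_3=v_2v_4$.
\item For this test function, $(I-A)(c^p,q^p)^T$ has a negative first and a positive second component. Hence $(\varepsilon I-(I-A)^{-1})^{-1}$ cannot be positive for small $\varepsilon$, and optimizing $Q^p$ yields $M_0$.
\item Weak Krein--Rutman applied to $(I-A)^{-1}$ gives $\lambda_0=1-1/\mathrm{r}_0$ with a nonnegative eigenfunction. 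ODE uniqueness then makes it strictly positive.
\end{enumerate}

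\textbf{Once $(c_0,q_0)>0$ is available, the rest of your plan is sound.}
\begin{itemize}
\item Your (ii) and (iii) are essentially the paper's arguments.
\item Your comparison argument gives dominance directly. For any eigenpair $(\lambda,\phi)$ one has $e^{t\Re(\lambda)}|\phi|=|e^{tA}\phi|\le e^{tA}|\phi|\le Ke^{t\lambda_0}(c_0,q_0)^T$. This is more robust than bounding eigenvalues $\mu$ of the resolvent by $|\mu|\le \mathrm{r}_0$, which alone does not control $\Re(1/\mu)$.
\item Your mass argument for $\lambda_0<0$ works once you use the conserved quantity $\int(c+Pq)$ rather than $\int(c+q)$. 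It gives $\lambda_0\int_{-2}^{2}(c_0+Pq_0)=-(v_1-v_2)c_0(-1)-(v_3-v_4)c_0(1)<0$.
\end{itemize}
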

\begin{rem}
If $Re(\lambda)\leq \lambda_0$ for all $\lambda\in \sigma(A)$ (spectrum of the operator $A$), then we will call $\lambda_0$ to be the dominant eigenvalue of $A$.
\end{rem}
\begin{rem}
Point (ii) of Theorem \ref{thfa} says that $\lambda_0$ gives the optimal decay rate of solutions of problem \eqref{eqn0t}-\eqref{bcn0t} with $f_0=0$ and, hence, the optimal rate of convergence of solutions of problem \eqref{eqn0t}-\eqref{bcn0t} to its steady state. This completes the work of \cite{Menacho-SM}.
\end{rem}
\begin{rem}
We will see that if $(c(x,0),q(x,0))^T\in D(A)$ then the solutions are piecewise continuous in $x$ and $t$ (with the known exceptions for $c(x,t)$ at $x=0,\pm2$), and then the above a.e. caution conditions are not needed.
\end{rem}
\begin{rem}
Observe in \eqref{eps0} that $-M_0>-R$. It is perhaps not accurate, but it becomes easier to remember that $-R<\lambda_0<0$.
\end{rem}
We want to emphasize that the decay rate of the solutions of an abstract dissipative linear equation of the form $U_t=A\,U$ is given by the spectral bound of $A$ (the dominant eigenvalue of $A$ in our case), at least in finite dimensional systems, or in systems where the Spectral Mapping Theorem can be applied to $e^{tA}$ for $t>0$. As we said, in our previous work \cite{Menacho-Pell-SM}, we proved similar results to Theorem \ref{thfa} for a simpler model. In that case, the evolution operator $e^{tA}$ needed to be eventually compact, as this enabled us to apply the Spectral Mapping Theorem. With that, we were able to prove that the dominant eigenvalue gave the optimal decay rate of the solutions. This also enabled us to prove that the corresponding eigenfunction gave the profile of the solutions for large times.

However, in the present work, we have not been able to prove that $e^{tA}$ is compact or eventually compact, and it is possible that this property does not hold (see Remark \ref{rem:compact}). This is another of the reasons why we rely on Functional Analysis techniques: instead of using the Spectral Mapping Theorem, we overcome this difficulty by applying the Krein--Rutman Theorem together with positivity arguments. This is done in Section \ref{posit}, where the complete proof of Theorem \ref{thfa} is provided.

The results on the optimal decay rate of solutions of \eqref{eqn0t}-\eqref{bcn0t} established in Theorem \ref{thfa} suggest that the large time profile of the decay of these solutions is governed by the eigenfunction associated with the dominant eigenvalue $\lambda_0$.
We have not been able to prove this result analytically, but all our numerical experiments consistently support this behaviour (see Fig. \ref{fig:conv}). We therefore formulate it as a reasonable conjecture and leave its proof as a direction for future work (see also Fig. \ref{fig:conv}):

\textit{Let $(c(x,t),q(x,t))^T=e^{tA}(c(x,0),q(x,0))^T$ be a solution of the initial value problem. Then
\begin{equation}\label{conj}
(c(x,t),q(x,t))^T= M_1
e^{t\lambda_0}(c_0(x),q_0(x))^T+o(e^{t\lambda_0})
\end{equation}
when $t\to\infty$, where $M_1=\langle (c(x,0),q(x,0))^T, (c^*_0(x),q^*_0(x))^T\rangle_X$, $\langle\ ,\ \rangle_X$ is the scalar product in $X$ and
$(c^*_0(x),q^*_0(x))^T$ is the eigenfunction associated to $\lambda_0$ of the adjoint operator $A^*$, normalized (for instance) in such a way that
$\langle (c_0(x),q_0(x))^T, (c^*_0(x),q^*_0(x))^T\rangle_X=1.$
}

Section \ref{sens} is devoted to sensitivity analysis of $\lambda_0$, that is, to analyse the variation of the dominant eigenvalue $\lambda_0$ with respect to the six parameters of the problem. That is, $v_i$ for $i=1,\ldots 4$, $R$ and $P$. As we will see, this is done using the adjoint problem and the derivative problem, which will be computed for each parameter.

As we have said, neither the calculation of $C(\lambda)$ nor the solution of the equation $\Delta(\lambda)=0$ is very straightforward, and has to be done by using a mixture of analytical and numerical methods. This has an exception, namely the limit conservative case $v_1=v_2=v_3=v_4=v>0$, where the description of the whole spectrum can be done quite explicitly. This situation represents no extraction nor injection through the ports, and perhaps is not very important in practice, but it will be a relevant theoretical example. This will be done in Section \ref{limit}.

In Section \ref{case} we present an example of the complete program for a set of realistic values of the parameters, that correspond to a case of omeprazole enantiomers described in F. Wei et al. \cite{Wei}. The full program consists of the following: first, computing the steady state and a general solution of \eqref{eqn0t}-\eqref{bcn0t} with $f_0=1$, illustrating its convergence towards the steady state (see Fig. \ref{steady}, \ref{ivp} and \ref{fig:conv}); second, deriving and analysing the characteristic function $\Delta(\lambda)$, determining the dominant eigenvalue $\lambda_0$, and obtaining $(c_0(x), q_0(x))$, $(c_0^*(x), q_0^*(x))$ (the eigenfunctions of the problem and the adjoint problem corresponding to $\lambda_0$, respectively); and, third, performing the sensitivity analysis of $\lambda_0$ with respect to small perturbations of the six parameters.

Finally, the paper concludes with three appendices, where we provide the explicit and complete calculations and procedures required throughout the work: the construction of four matrices $M_i$, $i=1,2,3,4$, used in the computation of $\Delta(\lambda)$, the computation of the eigenfunctions coefficients, the sensitivity analysis, the asymptotic behaviour of $\Delta(\lambda)$ as $|\lambda|\to\infty$, and the numerical method employed to simulate the solution of the evolution problem \eqref{eqn0t}--\eqref{bcn0t}. All these details are included to describe the (non-trivial) implementation of the computations. For clarity of exposition, we present them in separate appendices.

\section{Abstract Setting}\label{sec:functional}
We write the evolution problem \eqref{eqn0t} with boundary conditions \eqref{bcn0} in the abstract form given in \eqref{abs},
where $A:D(A)\subset X\rightarrow X$ is the linear operator
\begin{equation} \label{opA}
\left.\left(A\begin{pmatrix}c\\q\end{pmatrix}\right)\right|_{I_i}=A_i \left(\left.\begin{pmatrix}c\\q\end{pmatrix}\right|_{I_i}\right):=\begin{pmatrix}
-v_i\frac{d}{dx}-R P^2 \,\Id& RP\, \Id\\RP\, \Id&\frac{d}{dx}-R\, \Id
\end{pmatrix}\begin{pmatrix}c\\q\end{pmatrix}\ \textrm{for }  I_i=[x_i,x_{i+1}],\ i = 1,2,3,4.
\end{equation}
Here, $X=L^2(-2,2)\times L^2(-2,2),$
real or complex,
with its natural scalar product
$$\langle (c_1,q_1)^T,(c_2,q_2)^T\rangle_X =\int_{-2}^2 \left(c_1(x)\overline{c_2(x)} +q_1(x)\overline{q_2(x)}\right)\, dx,$$
and
the domain $D(A)$ is made of the Sobolev class functions
\begin{equation*}
D(A)=\left\{(c, q)^T \in \prod _{i=1}^4 H^1(I_i)\times\prod _{i=1}^4H^1(I_i), \text{ satisfying the conditions \eqref{bcn0} } \right\}.
\end{equation*}
\begin{rem}
Observe that the domain condition for $q$ is as merely asking $q\in H^1(-2,2)$, with $q(-2)=q(2)$.
\end{rem}
We define now the following natural quadratic energy for solutions $(c(x,t),q(x,t))^T$ of \eqref{abs}:
\begin{equation}\label{eq:energy}
E(t) =\dfrac{1}{2}\int_{-2}^{2} (|c(x,t)|^2+|q(x,t)|^2)\ dx.
\end{equation}
We write $|c|^2$ and $|q|^2$ in the energy instead of $c^2$ and $q^2$ in order to also deal with possibly complex-valued solutions.
\begin{lemma}\label{lem:dEdt}
If inequalities \eqref{ineq} hold (even if they are not strict), then
\begin{enumerate}[(i)]
\item
For any $(c,q)^T\in D(A)$, the operator $A$ defined in \eqref{opA} is dissipative, that is,
\begin{equation}\label{eq:maxmon}
{\rm Re}(\langle A(c,q)^T, (c,q)^T\rangle_X\le 0
\end{equation}
\item
For any $(c(t),q(t))^T$ solution of \eqref{abs}, the energy \eqref{eq:energy} is decreasing in $t$, that is,
$$\dfrac{dE}{dt} \leq 0,\ \textrm{ for all }t\ge 0.$$
\end{enumerate}
\end{lemma}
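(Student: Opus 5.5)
Part (ii) follows from (i), since for a solution $(c(t),q(t))^T$ of \eqref{abs} we have $\frac{dE}{dt}={\rm Re}\langle A(c,q)^T,(c,q)^T\rangle_X$. So the work is in (i), which I would prove by splitting ${\rm Re}\langle A(c,q)^T,(c,q)^T\rangle_X$ into a reaction part and a transport part and showing each is nonpositive.

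\emph{Reaction part.} The zero-order terms contribute
$$-\int_{-2}^2\Big(RP^2|c|^2-2RP\,{\rm Re}(q\bar c)+R|q|^2\Big)dx=-R\int_{-2}^2|Pc-q|^2\,dx\le 0.$$
The off-diagonal entries $RP$ are symmetric, so the cross terms combine into $2RP\,{\rm Re}(q\bar c)$ and complete the square.

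\emph{Transport part.} On each $I_i$ I use ${\rm Re}(c_x\bar c)=\frac12(|c|^2)_x$, valid for $H^1$ functions. Integrating interval by interval gives
$$-\sum_{i=1}^4\frac{v_i}{2}\Big(|c(x_{i+1}^-)|^2-|c(x_i^+)|^2\Big)+\frac12\Big(|q(2)|^2-|q(-2)|^2\Big).$$
The $q$-term vanishes because $q$ is continuous on $[-2,2]$ with $q(-2)=q(2)$.

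For the $c$-terms I regroup by port. Write $a=|c(-2^+)|^2$, $b=|c(-1)|^2$, $d=|c(0^-)|^2$, $e=|c(0^+)|^2$, $f=|c(1)|^2$, $g=|c(2^-)|^2$, using continuity of $c$ at $\pm1$. The sum becomes
$$-\tfrac12\Big[(v_1-v_2)b+(v_3-v_4)f+(v_2d-v_3e)+(v_4g-v_1a)\Big].$$
The first two brackets are $\ge0$ by \eqref{ineq}. For the third, the boundary condition $v_3c(0^+)=v_2c(0^-)$ gives $e=(v_2/v_3)^2d$, so
$$v_2d-v_3e=\frac{v_2}{v_3}(v_3-v_2)\,d\ge 0.$$
Similarly $v_1c(-2^+)=v_4c(2^-)$ gives $v_4g-v_1a=\frac{v_4}{v_1}(v_1-v_4)\,g\ge0$. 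Thus the transport part is $\le 0$, and adding the reaction part proves (i).

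The main obstacle is the bookkeeping at the injection ports, where $c$ jumps. It is essential to use the flux conditions, rather than continuity of $c$, together with the inequalities $v_2\le v_3$ and $v_4\le v_1$. All the inequalities are used in non-strict form, which is why the lemma holds even when they are equalities.

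For (ii), I would first treat solutions with initial data in $D(A)$. These are classical solutions in $C^1([0,\infty),X)$, so $E'(t)={\rm Re}\langle A U,U\rangle_X\le0$ by (i). For general mild solutions I would use density of $D(A)$ and continuity of the semigroup to conclude that $E$ is nonincreasing.
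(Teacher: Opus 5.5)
Your proof is correct and is essentially the paper's argument: you complete the square in the zero-order terms to get $-R\int_{-2}^2|Pc-q|^2\,dx$, discard the $q$-boundary term by periodicity, and regroup the $c$-boundary terms port by port using the flux conditions and the non-strict inequalities \eqref{ineq}, with (ii) following from $E'(t)={\rm Re}\langle AU,U\rangle_X$. Your injection-port terms $\frac{v_2}{v_3}(v_3-v_2)|c(0^-)|^2$ and $\frac{v_4}{v_1}(v_1-v_4)|c(2^-)|^2$ are exactly the paper's $(v_3-v_2)|c(0^-)||c(0^+)|$ and $(v_1-v_4)|c(-2^+)||c(2^-)|$ in \eqref{dEdt}, and you correctly keep the overall factor $\tfrac12$ on the boundary terms, which the paper's display drops (harmlessly, since only the sign matters).
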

\begin{pf}
The first part of the lemma is obtained from the next calculations, using both equations of system \eqref{eqn0t}, integrating, and using the boundary conditions \eqref{bcn0} for $q$:
\begin{align*}
{\Re}\left(\langle A(c,q)^T,(c,q)^T\rangle_X\right)
=\frac{1}{2}\sum_{i=1}^4 v_i\left(-|c(x_{i+1})|^2+|c(x_i)|^2\right) -\int_{-2}^2 R|Pc-q|^2\ dx
\end{align*}
Using again \eqref{bcn0}, but now for $c$, we can write the previous expression as
\begin{multline}\label{dEdt}
{\rm Re}(\langle A(c,q)^T, (c,q)^T\rangle_X=
-R\int_{-2}^{2}|Pc-q|^2 dx
-(v_1-v_2)|c(-1)|^2-(v_3-v_4)|c(1)|^2-\\ -(v_1-v_4)|c(-2^+)||c(2^-)|-(v_3-v_2)|c(0^-)||c(0^+)|\le 0
\end{multline}
which is non-negative because of the inequalities \eqref{ineq} (even if they were not strict).

The second part of the lemma follows directly from the first one. Since $E(t)=\frac{1}{2}\langle (c(t),q(t))^T, (c(t),q(t))^T\rangle_X$ we have
$\frac{d}{dt}E(t)=\frac{1}{2}\langle A(c,q)^T, (c,q)^T\rangle_X+\frac{1}{2}\langle (c,q)^T, A(c,q)^T\rangle_X={\rm Re}(\langle A(c,q)^T, (c,q)^T\rangle_X)$, which gives the desired result.
\end{pf}
\begin{rem}\label{rem:dissipE}
In the inequality \eqref{dEdt}, we can see the two phenomena that are responsible for the dissipation of the energy: the equilibrium between the two phases (integral term), and the exit or dilution of the species through the ports (boundary terms).
\end{rem}
\begin{prop}
If the inequalities \eqref{ineq} hold (even if they are not strict), the operator $A$ defined in \eqref{abs} generates a $C_0$-semigroup of contractions $T(t)=e^{At}$ on the space $X$.
\end{prop}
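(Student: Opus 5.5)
We will apply the Lumer--Phillips theorem. Lemma \ref{lem:dEdt}(i) already shows that $A$ is dissipative on $D(A)$. It therefore remains to check two things: that $D(A)$ is dense in $X$, and that $\lambda I - A$ is surjective for some (hence every) $\lambda>0$. Density is immediate, since $D(A)$ contains $C_c^\infty$ functions supported in the interiors of the $I_i$ for both components, and these are dense in $L^2(-2,2)^2$. Closedness of $A$ then follows from dissipativity together with the range condition, so it needs no separate argument.

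The main step is the range condition. Fix $\lambda>0$ large and $(f,g)^T\in X$. We must solve
$$\lambda c+v_ic_x+RP^2c-RPq=f,\qquad \lambda q-q_x+Rq-RPc=g\quad\text{on } I_i,$$
subject to the boundary conditions \eqref{bcn0}. On each interval this is a linear first-order ODE system with $L^2$ right-hand side, so its solution is in $H^1(I_i)$ and depends affinely on the boundary data. The data are prescribed in the natural upwind way: $c$ at the left endpoint of each $I_i$, and $q$ at the right endpoint.

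This reduces the problem to a finite-dimensional linear system. There are 8 unknowns, namely the inflow values $c(x_i^+)$ and $q(x_{i+1}^-)$. The transmission conditions \eqref{bcn0} supply exactly 8 linear equations: two for continuity of $c$ at $\pm1$, the two flux relations at $0$ and $\pm2$, and four for continuity of $q$ at $-1,0,1$ and periodically at $\pm 2$. By the Fredholm alternative for this $8\times 8$ system, solvability for every right-hand side is equivalent to uniqueness, that is, to injectivity of $\lambda-A$.

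Injectivity follows directly from dissipativity. If $(\lambda-A)U=0$ with $U\in D(A)$, then
$$\lambda\|U\|^2=\mathrm{Re}\langle AU,U\rangle_X\le 0,$$
so $U=0$ because $\lambda>0$. Since this argument is carried out in the finite-dimensional reduction, no compactness is needed.

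The only delicate point is the bookkeeping of the transmission conditions. We must confirm that the counting is exactly 8 conditions for 8 unknowns, and that every function produced by solving the reduced system genuinely satisfies \eqref{bcn0} and belongs to $\prod H^1(I_i)$. If the counting turns out to be awkward, an alternative is to verify the range condition through the adjoint: show that $A^*$ (computed by the same integration by parts used in Lemma \ref{lem:dEdt}) is also dissipative, and then invoke the corollary of Lumer--Phillips for closed, densely defined operators $A$ with both $A$ and $A^*$ dissipative. This route would additionally require showing that $A$ is closed, which holds because the graph norm controls the $H^1(I_i)$ norms and the traces.

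Finally, Lumer--Phillips yields a $C_0$-semigroup of contractions $T(t)=e^{At}$, which is what the proposition asserts.
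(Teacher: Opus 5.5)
Your proof is correct and follows the paper's route: Lumer--Phillips with density, dissipativity from Lemma~\ref{lem:dEdt}, and the range condition reduced to a finite-dimensional linear system that is invertible because no $\lambda>0$ can be an eigenvalue of $A$. The paper does this in Lemma~\ref{invers} by shooting from $x=-1^+$ with the $2\times 2$ return map $C(s)$ and extending from piecewise continuous data by density, while you use an $8\times 8$ system and handle $L^2$ data directly, which is equally valid.

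The one step you leave implicit is that on a single $I_i$ the problem with $c$ prescribed at $x_i^+$ and $q$ at $x_{i+1}^-$ is uniquely solvable. This follows from the energy identity on $I_i$: zero data give $\lambda\|U\|_{L^2(I_i)}^2+\tfrac{v_i}{2}|c(x_{i+1}^-)|^2+\tfrac12|q(x_i^+)|^2+R\|Pc-q\|_{L^2(I_i)}^2=0$. Alternatively, you can avoid the issue by parametrizing each interval by $(c(x_i^+),q(x_i^+))$, which still gives $8$ unknowns for the $8$ conditions.
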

\begin{pf}
We first start by noticing that $D(A)$ is dense in $X$, which is easy to see. Second, we can see that the range $\mathcal{R}(s\Id-A)=X$ for some $s>0$. This second property is more delicate and will be proved later on, in Lemma \ref{invers}. Indeed, we will prove there that $s\Id-A$ has a continuous inverse defined in $X$ for all $s>0$, and that $\|(s\Id-A)^{-1}\|\le s^{-1}$. Finally, we take into account the dissipativeness inequality \eqref{eq:maxmon}.

With these three ingredients, by the well-known Lumer-Phillips Theorem (see, for instance, Theorem 4.3 of A. Pazy \cite{Pazy}), we conclude that the operator $A$ generates a $C_0$-semigroup $e^{tA}$ of contractions.
\end{pf}
As a consequence of the previous proposition, we have the following existence and uniqueness result for the initial value problem:
\begin{thm}
For any initial condition $(c_0,q_0)^T\in D(A)$, there exists a unique solution $(c(t),q(t))^T=e^{tA}(c_0,q_0)^T$ of \eqref{abs} satisfying that $(c(t),q(t))^T\in \cC^0([0,\infty),D(A))\cap\cC^1([0,\infty)),X)$ (\textsl{strict solution}). Also, for any $(c_0,q_0)\in X$, there also exists a unique $(c(t),q(t))^T\in\cC^0([0,\infty)),X)$ which satisfies \eqref{abs} in a weak sense (\textsl{mild solution}).
\end{thm}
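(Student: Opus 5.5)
This follows directly from the preceding Proposition together with the standard theory of $C_0$-semigroups. I will invoke the classical results as stated in Pazy's book \cite{Pazy}: the Lumer--Phillips generation result already gives that $T(t)=e^{tA}$ is a contraction semigroup, and then the strict and mild existence and uniqueness statements follow from the general theory of the abstract Cauchy problem $U_t=AU$, $U(0)=U_0$.

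\textbf{Strict solutions.} For $U_0=(c_0,q_0)^T\in D(A)$, I will use the basic semigroup facts (Theorem 2.4 in Chapter 1 of \cite{Pazy}): $T(t)U_0\in D(A)$ for every $t\ge0$, the map $t\mapsto T(t)U_0$ is differentiable in $X$, and $\frac{d}{dt}T(t)U_0=AT(t)U_0=T(t)AU_0$.

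\begin{itemize}
\item \emph{Regularity in $X$.} Since $t\mapsto T(t)AU_0$ is continuous in $X$, the derivative is continuous, so $U(t)\in\cC^1([0,\infty),X)$.
\item \emph{Regularity in $D(A)$.} The graph norm $\|U\|_X+\|AU\|_X$ of $U(t)$ is continuous in $t$, because both $U(t)$ and $AU(t)=T(t)AU_0$ are continuous in $X$. Hence $U(t)\in\cC^0([0,\infty),D(A))$, with $D(A)$ carrying the graph norm. This norm is equivalent to the $\prod H^1(I_i)$ norm, since on each $I_i$ the derivatives are recovered from $AU$ plus bounded zeroth-order terms, using $v_i>0$.
\item \emph{Uniqueness.} Let $V$ be another strict solution with the same initial data. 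For fixed $t$, set $W(s)=T(t-s)V(s)$ for $0\le s\le t$. Differentiating gives $W'(s)=-T(t-s)AV(s)+T(t-s)V'(s)=0$. Therefore $V(t)=W(t)=W(0)=T(t)U_0$.

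An alternative argument for uniqueness uses the energy: the difference of two solutions satisfies $E'\le0$ by Lemma \ref{lem:dEdt} and starts with $E(0)=0$, so it vanishes.
\end{itemize}

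\textbf{Mild solutions.} For $U_0\in X$, I define $U(t)=T(t)U_0$. It is continuous in $X$ by strong continuity of the semigroup, and it satisfies the integrated equation
\[
U(t)=U_0+A\int_0^tU(s)\,ds,
\]
which is again Theorem 2.4 of \cite{Pazy}. This is what "satisfies \eqref{abs} in a weak sense" means here. Uniqueness comes from density of $D(A)$ together with the contraction bound $\|T(t)\|\le1$. Concretely, a mild solution is the $X$-limit of strict solutions: approximate $U_0$ by $U_0^n\in D(A)$, and then $\|T(t)U_0^n-T(t)U_0\|\le\|U_0^n-U_0\|$. Alternatively, if $V$ solves the integrated equation with $V(0)=0$, then $\int_0^tV$ is a strict solution with zero data, so it vanishes by the previous step, and hence $V=0$.

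\textbf{Main obstacle.} The only nontrivial ingredient is the generation result itself. It rests on the range condition of Lemma \ref{invers}, namely solving $(s\Id-A)U=F$ on the four intervals with the coupled jump conditions \eqref{bcn0}. That lemma is assumed proved later in the paper, so the present theorem is essentially a corollary.
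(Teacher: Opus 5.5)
Your proposal is correct and matches the paper, which gives no separate proof and simply presents this theorem as a direct consequence of the Lumer--Phillips generation result (the preceding Proposition, resting on Lemma~\ref{invers}) combined with the standard Cauchy-problem theory in Pazy. One minor point: of your two uniqueness arguments for mild solutions, only the second one, where $\int_0^t V(s)\,ds$ turns out to be a strict solution with zero data, actually proves uniqueness, because the density-plus-contraction argument only shows that $T(t)U_0$ is the $X$-limit of strict solutions.
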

\begin{rem}
Mild solutions are not as irrelevant in practice as they may seem at first sight, since they admit discontinuous initial conditions.
\end{rem}
Lemma \ref{lem:dEdt} has the following implications.
\begin{prop}\label{prop:Reneg}
Let $\lambda$ be any eigenvalue of $A$ defined in \eqref{opA}. If inequalities \eqref{ineq} hold (even if they are not strict), then $\Re(\lambda)\leq 0$. Moreover, if at least one of the four dissipative inequalities \eqref{ineq} holds in the strict sense, then $\Re(\lambda)<0$.
\end{prop}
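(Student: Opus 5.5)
The plan is to apply the identity \eqref{dEdt} from Lemma \ref{lem:dEdt} to an eigenfunction. Let $(c,q)^T\in D(A)$ be a nontrivial eigenfunction with $A(c,q)^T=\lambda(c,q)^T$, possibly complex. Then
$$\Re\langle A(c,q)^T,(c,q)^T\rangle_X=\Re(\lambda)\,\|(c,q)^T\|_X^2 .$$
The first assertion follows immediately from \eqref{eq:maxmon}: the right-hand side is $\le 0$ and $\|(c,q)^T\|_X>0$, so $\Re(\lambda)\le 0$.

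For the strict inequality I argue by contradiction. Suppose $\Re(\lambda)=0$. Then every term on the right of \eqref{dEdt} must vanish, since each is nonpositive. First, $R\int|Pc-q|^2=0$ with $R>0$ forces $q=Pc$ a.e. on $[-2,2]$. Second, each boundary term with a strict coefficient vanishes. For example, if $v_1>v_2$ then $c(-1)=0$, and if $v_1>v_4$ then $|c(-2^+)||c(2^-)|=0$, which together with $v_1c(-2^+)=v_4c(2^-)$ and $v_1,v_4>0$ gives $c(-2^+)=c(2^-)=0$. In every case the conclusion is that $c$ vanishes at one point of $[-2,2]$ (one-sided if at a jump).

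Next I substitute $q=Pc$ into the eigenvalue system \eqref{eqn0}. The coupling terms vanish: $RP(-Pc+q)=0$ and $R(Pc-q)=0$. On each $I_i$ this leaves
$$\lambda c+v_ic_x=0,\qquad \lambda q-q_x=0,\qquad q=Pc,$$
so $\lambda c=-v_ic_x$ and $\lambda c=c_x$, since $P>0$. Hence $(1+v_i)c_x=0$, so $c_x=0$ and then $\lambda c=0$ on each interval. Thus $c$ (and so $q$) is piecewise constant, and either $\lambda=0$ or $c\equiv0$.

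Now I use the boundary conditions \eqref{bcn0} to propagate values. Because $q=Pc$ is continuous on the whole circle, $c$ takes one and the same constant value $\kappa$ on every interval. The point where $c$ was shown to vanish then forces $\kappa=0$, so $c\equiv0$ and $q\equiv0$, contradicting nontriviality. Hence $\Re(\lambda)<0$.

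The main obstacle is bookkeeping: checking that each of the four possible strict inequalities yields a vanishing point of $c$. This holds for the jump ports too, because the product term forces one side to vanish and the flux condition (with positive velocities) then kills the other side. Beyond that, the only step needing care is confirming that, in the eigenfunction (non-evolution) form, the $L^2$ conclusion $q=Pc$ holds pointwise on each $I_i$. This is fine because $H^1$ functions are continuous.
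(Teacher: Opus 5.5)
Your proof is correct and follows the paper's argument: dissipativity \eqref{eq:maxmon} gives $\Re(\lambda)\le 0$, and if $\Re(\lambda)=0$ then \eqref{dEdt} forces $q=Pc$ together with a zero of $c$ at a port whose inequality is strict. The only difference is the last step. The paper propagates $(c,q)=(0,0)$ from that point around the loop by ODE uniqueness and the transmission conditions. You instead observe that the two decoupled equations are compatible with $q=Pc$ only if $c_x=0$, so $c$ is one global constant (and $\lambda=0$ unless $c\equiv0$), and the zero then kills that constant. Your route is equally short and also shows that $0$ is the only possible eigenvalue on the imaginary axis.
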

\begin{pf}
Let $(c,q)^T$ be the eigenfunction corresponding to $\lambda$. We have that $\lambda\langle (c,q)^T, (c,q)^T\rangle= \langle A (c,q)^T,(c,q)^T\rangle$, and by the inequality \eqref{eq:maxmon}
we conclude that $\Re(\lambda)\leq 0$.

Let us now assume that at least one of the four dissipative inequalities \eqref{ineq} holds in the strict sense. For instance, assume $v_1>v_2$ (the other cases are analogous). Suppose that $\Re(\lambda)=0$ (with $(c,q)\not\equiv (0,0)$) and we will arrive to a contradiction. As ${\rm Re}\langle A(c,q)^T,(c,q)^T\rangle=\Re(\lambda)\,\|(c,q)\|^2$, and going back to \eqref{dEdt}, we see that
$$
0=-\int_{-2}^2R|Pc-q|^2\ dx-(v_1-v_2)|c(-1)|^2-(v_3-v_4)|c(1)|^2-(v_1-v_4)|c(-2^+)||c(2^-)|-(v_3-v_2)|c(0^-)||c(0^+)|,
$$
and consequently $Pc\equiv q$ and $c(-1)=0$, and then also $q(-1)=0$. Then the system \eqref{eqn0}, that is, $A(c,q)^T=\lambda(c,q)^T$, reads on each interval $I_i$ as the homogeneous and uncoupled linear system of ordinary differential equations
\begin{equation*}
\left\{ \begin{array}{ll}
&\dfrac{dc}{dx}=-\dfrac{\lambda}{v_i} c_n  \\[10pt]
&\dfrac{dq}{dx} =\lambda q
\end{array}\right.
\end{equation*}
with the boundary conditions \eqref{bcn0}. The continuity and jump boundary conditions \eqref{bcn0} imply that if the final value in an interval is $(0,0)^T$, then also $(0,0)^T$ has to be the initial condition in the next interval. And then $c(-1)=q(-1)=0$ implies that $c(x)\equiv q(x)\equiv 0$ on $[-1,0]$ and $c(x)\equiv q(x)\equiv 0$ on $[-2,2]$, which is a contradiction.
\end{pf}
\section{The Characteristic Equation}\label{sec:carac}
In this section, we derive a characteristic equation for the eigenvalue problem \eqref{eqn0}-\eqref{bcn0}. Its solutions are the eigenvalues $\lambda$ of this problem, with $\lambda_0$ being the dominant one. As we will see, the obtention and manipulation of this characteristic equation is mathematically involved, both analytically and numerically. For this reason, we regard the results presented in this section as one of the main contributions of the present work.

For each $\lambda\in \C$ let us write \eqref{eqn0} as an explicit system of ordinary differential equations on each interval $I_i$, $i=1,2,3,4$:
\begin{equation}\label{eqn1v}
\dfrac{d}{dx}
\begin{pmatrix}
c\\q
\end{pmatrix}
=
\begin{pmatrix}
-\dfrac{\lambda+P^2R}{v_i}&\dfrac{RP}{v_i}\\-RP&\lambda+R
\end{pmatrix}
\begin{pmatrix}
c\\q
\end{pmatrix}=:F_i(\lambda)
\begin{pmatrix}
c\\q
\end{pmatrix},\text { for } x\in[x_i,x_{i+1}]
\end{equation}
whose fundamental matrix solution is $M_i(\lambda, x):=e^{(x-x_i) F_i(\lambda)}$,  for $x\in[x_i,x_{i+1}]$. Then, the solution of \eqref{eqn0} for a given $\lambda$ and satisfying the jump conditions \eqref{bcn0} with initial condition $(c(-1^+),q(-1^+))^T$ at $x=-1^+$ can be obtained by the following Transfer Matrix process:
$$\begin{pmatrix}
c(x)\\q(x)
\end{pmatrix}=M_2(\lambda,x)\begin{pmatrix}
c(-1^+)\\q(-1^+)
\end{pmatrix}, \text{ for }x\in[-1^+,0^-]$$
$$\begin{pmatrix}
c(x)\\q(x)
\end{pmatrix}=M_3(\lambda,x)\cdot\begin{pmatrix} v_2/v_3&0\\0&1\end{pmatrix}\cdot M_2(\lambda, 0^-)\begin{pmatrix}
c(-1^+)\\q(-1^+)
\end{pmatrix}, \text{ for }x\in[0^+,1^-]$$
$$\begin{pmatrix}
c(x)\\q(x)
\end{pmatrix}=M_4(\lambda,x)\cdot M_3(\lambda,1^-)\cdot\begin{pmatrix} v_2/v_3&0\\0&1\end{pmatrix}\cdot M_2(\lambda,0^-)\begin{pmatrix}
c(-1^+)\\q(-1^+)
\end{pmatrix}, \text{ for }x\in[1^+,2^-]$$
and
\begin{align*}
\begin{pmatrix}
c(x)\\q(x)
\end{pmatrix}=M_1(\lambda,x)\cdot\begin{pmatrix} v_4/v_1&0\\0&1\end{pmatrix}\cdot M_4(\lambda, 2^-)\begin{pmatrix} v_2/v_3&0\\0&1\end{pmatrix} \cdot M_3(\lambda,1^-)\cdot\begin{pmatrix} v_2/v_3&0\\0&1\end{pmatrix}\cdot M_2(\lambda,0^-)\begin{pmatrix}
c(-1^+)\\q(-1^+)
\end{pmatrix},& \\
\textrm{ for } x\in[-2^+,-1^-].&
\end{align*}
(recall that $2^-=-2^+$). Now we need the extra condition that $(c(-1^-),q(-1^-))^T$ results equal to the initial condition $(c(-1^+),q(-1^+))^T$. This extra condition is achieved only when $\lambda$ is an eigenvalue of the operator $A$. This amounts to saying that the Return Map, that is, the following $2\times 2$ matrix:
\begin{equation}\label{eq:C}
C(\lambda):= M_1(\lambda,-1^-)\cdot
\begin{pmatrix} v_4/v_1&0\\0&1\end{pmatrix}\cdot M_4 (\lambda, 2^-)\cdot M_3(\lambda, 1^-)\cdot
\begin{pmatrix} v_2/v_3&0\\0&1\end{pmatrix}\cdot M_2 (\lambda,0^-),
\end{equation}
has $\mu=1$ as one of its two eigenvalues.
Or, in other words, that the characteristic equation we are looking for is $$\Delta(\lambda)=0,$$
where the function $\Delta(\lambda)$ is defined by
\begin{equation}\label{eq:f}
\Delta(\lambda):=-\det(C(\lambda)-{\rm{Id}}), \text{ or }\Delta(\lambda):={\rm trace}(C(\lambda))-\det(C(\lambda))-1.
\end{equation}

Observe that one can avoid the intermediate values of $x_i$ and also write
\begin{equation}\label{simpleC}
C(\lambda)= e^{F_1(\lambda)}\cdot
\begin{pmatrix} v_4/v_1&0\\0&1\end{pmatrix}\cdot e^{F_4(\lambda)}\cdot e^{F_3(\lambda)}\cdot
\begin{pmatrix} v_2/v_3&0\\0&1\end{pmatrix}\cdot e^{F_2(\lambda)},
\end{equation}
or working with cyclic permutations of these six matrices, since cyclic permutations do not change the trace of their products. This would be equivalent to starting the cycle at a different $x_i$.

In contrast with the trace of $C(\lambda)$, the determinant of $C(\lambda)$ is easy to compute, using Liouville's formula for $\det(e^{F_i(\lambda)})$. We obtain

\begin{equation}\label{detC}
\det (C(\lambda))=  \dfrac{v_2v_4}{v_1v_3}\exp\left({2\left(\frac{\alpha_1(\lambda)}{v_1} + \frac{\alpha_2(\lambda)}{v_2} + \frac{\alpha_3(\lambda)}{v_3} + \frac{\alpha_4(\lambda)}{v_4}\right)}\right),
\end{equation}
where
\begin{equation}\label{alpha}
\alpha_i(\lambda) := \frac{(v_i - 1)\lambda + (v_i - P^2)R}{2}.
\end{equation}
The previous formula can also be written as
\begin{equation}\label{eq:detC2}
\det (C(\lambda)) = \dfrac{v_2v_4}{v_1v_3}\exp\left( \lambda \left(4-\sum_{i=1}^4\dfrac{1}{v_i} \right) + R\left( 4-P^2\sum_{i=1}^4\dfrac{1}{v_i}\right)   \right),
\end{equation}
where the dependence on $\lambda$ is more explicit. Observe that $\frac{v_2v_4}{v_1v_3}<1$ because of \eqref{ineq}.

Both to compute the function $\Delta(\lambda)$ or to obtain the whole eigenfunction for a given eigenvalue, it is useful to have an analytic expression for the matrices $M_i(\lambda, x)$. For this purpose, we use the eigenvalues and eigenvectors of $F_i(\lambda)$ (see \eqref{eqn1v}).

The eigenvalues $\X{\nu}{j}{i}(\lambda)$ of $F_i(\lambda)$ ($i=1,2,3,4$, $j=1,2$) are
\begin{equation}\label{eq:nu}
 \X{\nu}{1}{i}(\lambda) = \frac{\alpha_i(\lambda)}{v_i} + \sqrt{\left(\frac{\alpha_i(\lambda)}{v_i}\right)^2 + \frac{\beta(\lambda)}{v_i}} ,\qquad
 \X{\nu}{2}{i}(\lambda) = \frac{\alpha_i(\lambda)}{v_i} - \sqrt{\left(\frac{\alpha_i(\lambda)}{v_i}\right)^2 + \frac{\beta(\lambda)}{v_i}}
\end{equation}
with $\alpha_i(\lambda)$ given by \eqref{alpha} and
\begin{equation}\label{eq:beta}
\beta(\lambda) = \lambda^2 + \lambda R (1 + P^2).
\end{equation}
The corresponding eigenvectors $(\X{c}{j}{i}(\lambda),\X{q}{j}{i}(\lambda))$ ($i=1,2,3,4$, $j=1,2$) of $F_i(\lambda)$ are
\begin{equation*}
 \begin{pmatrix} \X{c}{j}{i}(\lambda) \\ \X{q}{j}{i}(\lambda) \end{pmatrix} = \begin{pmatrix} \lambda + R - \X{\nu}{j}{i}(\lambda) \\ RP \end{pmatrix}.
\end{equation*}
Then, the general solution of \eqref{eqn1v} (when $\X{\nu}{1}{i}\ne \X{\nu}{2}{i})$ is:
\begin{equation}\label{eq:fupA}
\begin{pmatrix} c_i(\lambda,x) \\ q_i(\lambda,x) \end{pmatrix} = \X{C}{1}{i} \begin{pmatrix} \lambda + R - \X{\nu}{1}{i}(\lambda) \\ RP \end{pmatrix} e^{\X{\nu}{1}{i}(\lambda)x} + \X{C}{2}{i} \begin{pmatrix} \lambda + R - \X{\nu}{2}{i}(\lambda) \\ RP \end{pmatrix} e^{\X{\nu}{2}{i}(\lambda)x},\textrm{ for }i = 1, 2, 3, 4.
\end{equation}
The matrices $M_i(\lambda,x)=e^{(x-x_i)F_i }$, for $x\in\R$, and also the matrix $C(\lambda)$ can be explicitly obtained after these expressions, as it can be seen in Appendix \ref{sec:Mi}. The eigenfunction coefficients can be obtained imposing the boundary conditions \eqref{bcn0} in each interval $I_i$: the constants  $\X{C}{j}{i}$ are then obtained by solving an $8\times 8$ linear system (see Appendix \ref{sec:Cji} for details).

\subsection{Asymptotics of $\Delta(\lambda)$}\label{sec:delta}
As an indication for the numerical treatment of the evaluations of the function $C(\lambda)$, we present a result on the asymptotic values of $\Delta(\lambda)$ for $\lambda\in\R$ and $|\lambda|$ large. The conclusion is that $\Delta(\lambda)\to +\infty$ as $\lambda\to\pm\infty$, $\lambda\in \R$. We present here the main steps for obtaining this result, but the complete computations are given in Appendix \ref{sec:appendix2}.
\begin{rem}\label{rem:Ogran}
As usual, for a real function $\varphi$ of a real variable $\lambda$ we write $\varphi(\lambda)=O(1)$ if $|\varphi(\lambda)|\le M$ for $|\lambda|$ large enough. We say $\varphi(\lambda)=O^+(1)$ if $0<m\le\varphi(\lambda)\le M$ for $|\lambda|$ large enough. So, we know the sign of $O^+(1)$, but not the sign of $O(1)$.
\end{rem}
For $\lambda \to -\infty$, we have from formula \eqref{eq:traceinfinity} that
\begin{equation}\label{traceminusinfty}
{\rm trace}(C(\lambda)) = O^+(1) \exp\left(\left(\sum_{i=1}^4\dfrac{1}{v_i}\right)|\lambda|\right)
\end{equation}
and we recall formula \eqref{eq:detC2} for $\det(\lambda)$, which implies that
\begin{align*}
\det(C(\lambda)) &= O^+(1) \exp\left(  \left(\sum_{i=1}^4\dfrac{1}{v_i}-4 \right)|\lambda|    \right)
\end{align*}
Therefore, it is clear that ${\rm trace}(C(\lambda))$ dominates $\det(C(\lambda))$ as $\lambda\to -\infty$, and $\Delta(\lambda)\to +\infty$ when $\lambda\to -\infty$. More precisely,
$$\Delta(\lambda)\sim\exp\left(\left(\sum_{i=1}^4\dfrac{1}{v_i}\right)|\lambda|\right) \textrm{ as }\lambda\to -\infty.$$
For $\lambda \to+\infty$, formula \eqref{eq:traceinfinity} gives
\begin{equation}\label{traceplusinfty}
{\rm trace}(C(\lambda) = O^+(1) \exp({4\lambda})
\end{equation}
and formula \eqref{eq:detC2} implies that
\begin{align*}
\det(C(\lambda)) &=O^+(1)\exp\left(  \left(4-\sum_{i=1}^4\dfrac{1}{v_i} \right)\lambda    \right).
\end{align*}
Therefore, it is clear that ${\rm trace}(C(\lambda))$ also dominates $\det(C(\lambda))$ as $\lambda\to +\infty$. More precisely,
$$\Delta(\lambda) \sim \exp(4\lambda)\to +\infty\textrm{ as }\lambda\to +\infty.$$
As we said, the above asymptotic expressions are carefully derived in Appendix \ref{sec:appendix2} below. But a heuristic deduction would be the following. Using the form \eqref{eqn1v} of the matrices $F_i(\lambda)$ when $|\lambda|\to \pm\infty$, we can say that, essentially,
\[
\left\{\begin{aligned}
F_i(\lambda) &\sim \begin{pmatrix}  \dfrac{|\lambda|}{v_i} & 0\\ 0 & -|\lambda|\end{pmatrix},
 \quad  \exp(F_i(\lambda)) \sim  \begin{pmatrix}  \exp\left(\dfrac{|\lambda|}{v_i} \right)& 0\\ 0 & 0\end{pmatrix},
& \quad\text{ if } \lambda\to -\infty \\[1em]
F_i(\lambda) &\sim  \begin{pmatrix}  \dfrac{\lambda}{v_i} & 0\\ 0 & -\lambda\end{pmatrix},
 \qquad  \exp(F_i(\lambda)) \sim   \begin{pmatrix}  0& 0\\ 0 & \exp(\lambda)\end{pmatrix},
& \quad \text{ if } \lambda\to +\infty
\end{aligned}\right.
\]
Then, performing the five products of diagonal matrices in \eqref{simpleC} for $\lambda\to-\infty$ and for $\lambda\to +\infty$, one gets \eqref{traceminusinfty} and \eqref{traceplusinfty} as reasonable results. But this is just an educated guess. The reason why this is not completely rigorous is that it is an exponential of a matrix with a very large term on the diagonal, and this term somehow contaminates terms that have other positions in the matrix. So we need the Appendix \ref{sec:appendix2} for the rigorous proof.

In Fig. \ref{mult} (left), there are four plots of the graph of the function $\Delta(\lambda)$ near the dominant eigenvalue for realistic values of the parameters. Concretely, for $v_1=1.53, v_2=1.12,  v_3=1.43, v_4=1.02$, $P=1.03$, and four different values of $R$: $R=9$ (dashed-blue), $R=18$ (continuous-black), $R=27$ (dotted-red), and $R=36$ (dash-dot green). The case $R=18$ corresponds to a real industrial case, which will be studied in more detail in Section \ref{case}.  Only the graph for $R=18$ is in the right scale. To fit in the same scale, the other three have been expanded or compressed: for $R=9$, the plot is that of $330\cdot\Delta$, for $R=27$ is that of $\Delta/100$, and for $R=36$ is that of $\Delta/10^4$. In Fig. \ref{mult} (right), we can see a general view of the same cases, now for $(2/\pi)\arctan(\Delta(\lambda))$ (and all at the same scale).

\begin{figure}[htpb]
  \centering
  \includegraphics[scale=0.5]{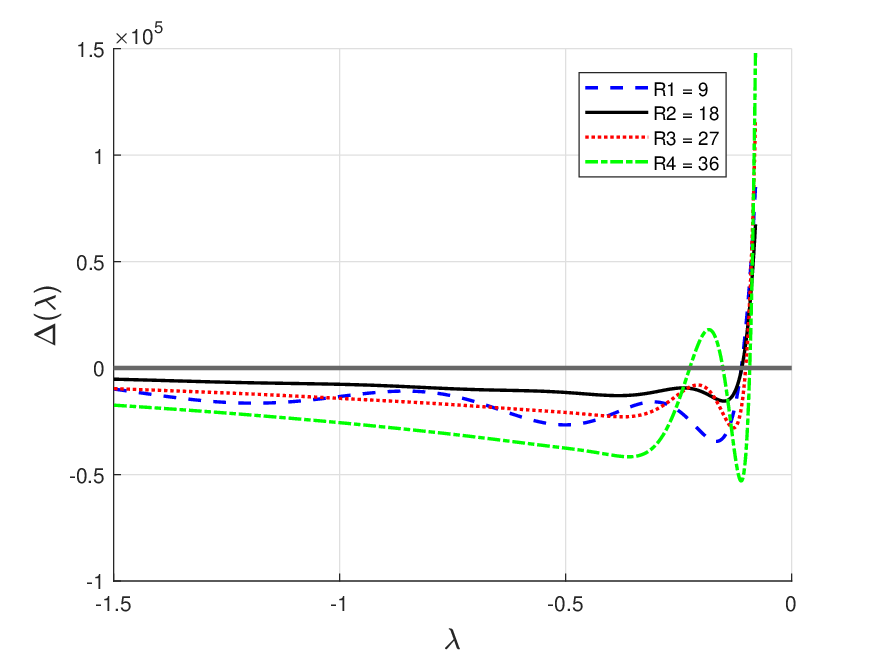} \includegraphics[scale=0.5]{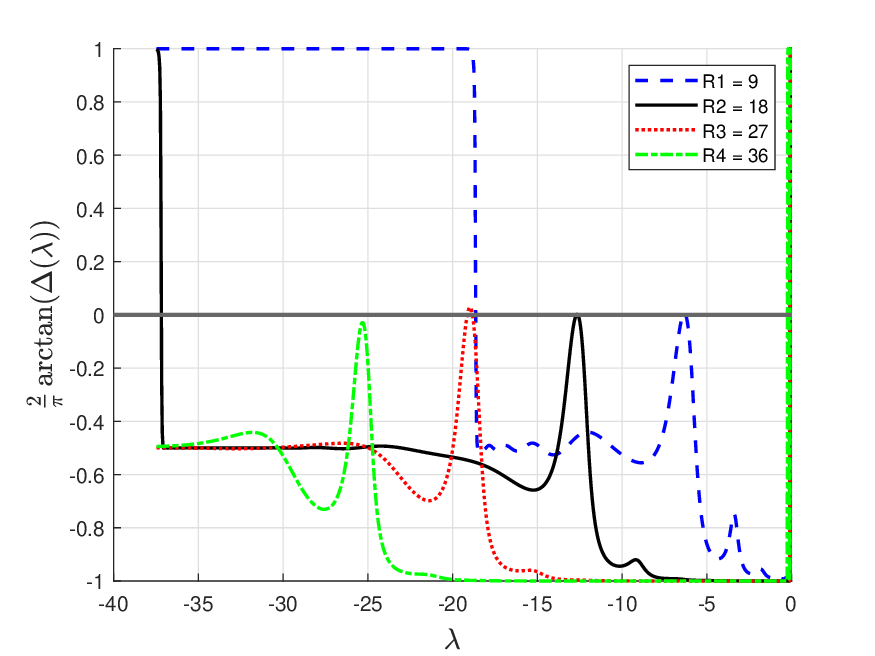}
  \caption{Left: partial plots of the graph of the function $\Delta(\lambda)$, for $R=9, 18, 27, 36$ ($v_i$ for $i=1,\ldots 4$, and $P$ as in the case study of Section \ref{case}), at quite different scales. Right: complete plots of the graph of the function $(2/\pi)\arctan(\Delta(\lambda))$, for the same values of $R$, at the same scale.}\label{mult}
 \end{figure}

Observe that the graphs are very steep near their largest real zero (see Fig. \ref{mult} (left)). Figure \ref{mult} (right) also shows the rapid growth of $\Delta(\lambda)$ when $|\lambda|\to\infty$, leading to numerical instabilities when dealing with it. Moreover, since we know that $\Delta(\lambda)\to\infty$ when $\lambda\to-\infty$, there should be at least a second real zero for sufficiently negative values of $\lambda$ (maybe more than two, depending on the values of the parameters), as Fig. \ref{mult} (right) clearly shows.
The numerical experiences further indicate that the graph remains relatively flat near $\Delta=-1$ between the first and last real zero, with some small fluctuations that may produce new real zeroes (see Fig. \ref{mult}, right), and it is very steep outside this interval, giving it a U-shaped profile. Although these features are numerically evident, we have not been able to establish them analytically. Such information would have allowed the direct application of Bolzano's Theorem, for instance, to guarantee the existence of an eigenvalue (one of the hypotheses required to apply Krein--Rutman's Theorem). Since we have not succeeded in this straightforward approach, we have instead relied on the four Functional Analysis lemmas in Section \ref{posit} to prove Theorem \ref{thfa}.


%
%

%
\section{Positivity and consequences: existence of $\lambda_0$}\label{posit}
In this section, we make a complete proof of the three parts of Theorem \ref{thfa}. We recall that one of the main purposes of this work is to prove the existence of a real dominant eigenvalue $\lambda_0$ and to prove that it gives the rate of convergence to the equilibrium of the solutions of problem \eqref{eqn0t}-\eqref{bcn0t}, as well as to localize it as accurately as possible. As we have seen in Section \ref{sec:carac}, this is equivalent to finding the largest real root of the equation  $\Delta(\lambda)=0$, where $\Delta(\lambda)$ is given in \eqref{eq:f}. But, as we can see in Section \ref{sec:carac} (and also Appendix \ref{sec:appendix2}), the analytical (and numerical) study of this function is highly intricate. The reason for the present Functional Analysis section is due to this fact: these are the tools that we have found to prove the main results of this paper, given in Theorem \ref{thfa}.

Let $(A,D(A))$ and $X$ be the operator and spaces defined in Section \ref{sec:functional}. We consider in $X$ the closed convex cone $X_+$ of (almost everywhere) non-negative components $c(x)\ge 0, q(x)\ge 0$. It induces a partial order relation in $X$. Since $X_++(-X_+)=X$, the cone $X_+$ is a reproducing cone.

The proofs are mainly based on the positivity of the operators $(-A+s\Id)^{-1}$ for $s>0$ and $e^{tA}$ for $t>0$ with respect to the cone $X_+$. Also on the application of the Krein-Rutman Theorem to the operators $(-A+s\Id)^{-1}$, whose compactness will be proved, though not to  $e^{tA}$, whose possible compactness is unclear (see Remark \ref{rem:compact}). The proofs will be done through the following series of four lemmas. These lemmas will be used to prove Theorem \ref{thfa}.
\begin{lemma}\label{invers}
For $s\in(0,+\infty)$ the linear operator $(-A+s \Id)^{-1}$ exists, and is a bounded and compact operator from $X$ to $X$ with $\|(-A+s \Id)^{-1}\|\le s^{-1}$.
\end{lemma}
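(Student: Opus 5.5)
The plan is to solve the resolvent equation $(s\Id-A)(c,q)^T=(f,g)^T$ explicitly as a boundary value problem for a linear ODE system, exactly in the spirit of the transfer-matrix construction of Section \ref{sec:carac}, and then read off existence, the norm bound and compactness from that construction together with the dissipativity of Lemma \ref{lem:dEdt}.

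\emph{Step 1 (a priori bound and injectivity).} For $(c,q)^T\in D(A)$ with $(s\Id-A)(c,q)^T=(f,g)^T$, taking the scalar product with $(c,q)^T$ and the real part, and using \eqref{eq:maxmon}, gives $s\|(c,q)\|_X^2\le {\rm Re}\langle (f,g)^T,(c,q)^T\rangle_X\le \|(f,g)\|_X\|(c,q)\|_X$. Hence $\|(c,q)\|_X\le s^{-1}\|(f,g)\|_X$. In particular $s\Id-A$ is injective for every $s>0$, and the bound $\|(s\Id-A)^{-1}\|\le s^{-1}$ follows as soon as surjectivity is proved.

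\emph{Step 2 (surjectivity via the return map).} On each $I_i$ the equation is the inhomogeneous system $\frac{d}{dx}(c,q)^T=F_i(s)(c,q)^T+(f/v_i,-g)^T$, by \eqref{eqn1v}. By variation of constants, for a given $(c(-1^+),q(-1^+))^T=:w$, the solution after one full turn, with the jump matrices inserted at $0$ and $\pm2$, is $C(s)w+b$. Here $b$ depends linearly and continuously on $(f,g)\in L^2$. Since $(f,g)\in L^2$, the resulting $c,q$ are in $H^1(I_i)$. The periodicity condition becomes $(\Id-C(s))w=b$. If $1$ were an eigenvalue of $C(s)$, the homogeneous problem would have a nontrivial solution, i.e. $s$ would be an eigenvalue of $A$. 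This contradicts Proposition \ref{prop:Reneg} (or simply Step 1). So $\Id-C(s)$ is invertible, $w$ is uniquely determined, and we obtain $(c,q)^T\in D(A)$ solving the equation. This gives existence and boundedness of the inverse.

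\emph{Step 3 (compactness).} The formula above shows that the range of $(s\Id-A)^{-1}$ lies in $\prod_i H^1(I_i)^2$. Moreover, by the ODE itself, $\|c'\|_{L^2(I_i)}+\|q'\|_{L^2(I_i)}\le C_s(\|(c,q)\|_X+\|(f,g)\|_X)\le C'_s\|(f,g)\|_X$. Thus bounded sets of $X$ are mapped to bounded sets of $\prod_i H^1(I_i)^2$. By the Rellich--Kondrachov theorem on each of the four bounded intervals, these sets are relatively compact in $L^2$, so the resolvent is compact.

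The main obstacle I expect is Step 2: making sure the variation-of-constants construction genuinely satisfies all the jump and continuity conditions \eqref{bcn0}, and that the reduction to the $2\times2$ invertibility of $\Id-C(s)$ is exact. The key point, which I would state carefully, is that the homogeneous problem having only the trivial solution is precisely equivalent to $\det(\Id-C(s))\neq0$.
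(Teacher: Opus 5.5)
Your proof is correct and follows the paper's argument. The common steps are the reduction via the return map to invertibility of $\Id-C(s)$, which holds because $s>0$ cannot be an eigenvalue of $A$; the dissipativity estimate giving $\|(s\Id-A)^{-1}\|\le s^{-1}$; and compactness from the piecewise-$H^1$ bound combined with the compact embedding into $X$. The only difference is that you handle $L^2$ data directly by variation of constants, whereas the paper first treats piecewise continuous data and then extends by density, so you avoid having to argue that the extended operator still produces weak-$H^1$ solutions in $D(A)$.
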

\begin{pf}
Given $(\varphi_1,\varphi_2)^T\in X$ we have to find $(c,q)^T\in D(A)$ such that $-A(c,q)^T+s(c,q)^T=(\varphi_1,\varphi_2)^T$, that is
\begin{equation}\label{eqn1nh}
\dfrac{d}{dx}
\begin{pmatrix}
c\\q
\end{pmatrix}
=
\begin{pmatrix}
-\dfrac{s+P^2R}{v_i}&\dfrac{RP}{v_i}\\-RP&s+R
\end{pmatrix}
\begin{pmatrix}
c\\q
\end{pmatrix}+\begin{pmatrix}-\dfrac{1}{v_i}\varphi_1\\ \varphi_2\end{pmatrix} \hspace{0.25cm} \textrm{      in each interval $I_i=[x_i,x_{i+1}]$} .
\end{equation}
We start with the case when the functions $\varphi_1,\varphi_2$ are continuous on each closed interval $I_i$. In this case, this is a standard non-homogeneous linear system of ordinary differential equations on each interval $I_i$. As in Section \ref{sec:carac}, the simplest way to solve it with the boundary conditions \eqref{bcn0} is perhaps in the order $I_2,I_3,I_4, I_1$ (that is, starting at $x=-1^+$). Observe that the initial condition of the equation in the next interval is obtained after the final value in the previous interval (and imposing the boundary conditions \eqref{bcn0}). Then, at the end, one has to merely impose that $(c(-1^-),q(-1^-))^T$ coincides with $(c(-1^+),q(-1^+))^T$.

To show that such a solution exists, we can construct a solution of the form $(c,q)^T = (c^h,q^h)^T + (c^p,q^p)^T  $, where $(c^h,q^h)^T$ is a solution of the homogeneous version of system \eqref{eqn1nh}, and $(c^p,q^p)^T$ is a particular solution of system \eqref{eqn1nh}. In both cases, we impose that the boundary conditions \eqref{bcn0} must be fulfilled, but we still have to impose the conditions at the first end $x=-1^+$.

In the case of the particular solution, we impose $(c^p(-1^+),q^p(-1^+))^T=(0,0)^T$, for example. Then, the particular solution $(c^p,q^p)^T$ exists in each interval $I_i$ because of the Existence and Uniqueness Theorem. In the case of the solution of the homogeneous system, we impose $(c^h(-1^+),q^h(-1^+))^T=(c_1,q_1)^T$, where $(c_1,q_1)^T$ is the algebraic solution of the linear system
$$C(s)(c_1,q_1)^T+(c^p(-1^-),q^p(-1^-))^T=(c_1,q_1)^T$$
with $C(s)$ being the Return Map given by the $2\times2$ matrix defined in \eqref{eq:C}. Recall that applying $C(s)$ to any solution at $x=-1^+$ means to run a whole cycle, that is,  $C(s)(c_1,q_1)^T =C(s)(c^h(-1^+),q^h(-1^+))^T = (c^h(-1^-),q^h(-1^-))^T$. We can assure that the above linear system has an algebraic solution since we know that the matrix $C(s)$ has not the eigenvalue $1$ when $s>0$ (because, in this case, $\lambda=s>0$ would be an eigenvalue of $A$, which is not possible, see Proposition \ref{prop:Reneg}). Hence, again by the Existence and Uniqueness Theorem for ODEs, $(c^h,q^h)^T$ exists. Observe that, with the previous definitions, we have $(c(-1^+),q(-1^+))^T = (c(-1^-),q(-1^-))^T$, and $(c,q)^T$ is a solution of the system \eqref{eqn1nh} with boundary conditions \eqref{bcn0}, as desired.

Still with the functions $\varphi_1,\varphi_2$ being continuous on each $I_i$,  we multiply the equality $-A(c,q)^T+s(c,q)^T=(\varphi_1,\varphi_2)^T$ by $(c,q)^T$ with the scalar product $\langle\cdot,\cdot\rangle_X$, and we get that
$$\langle-A(c,q)^T,(c,q)^T\rangle_X+s\langle (c,q)^T,(c,q)^T\rangle_X=
\langle (\varphi_1,\varphi_2)^T,(c,q)^T\rangle_X.$$
Then, we can take the real part of the previous equality and recall that $\Re\left(\langle-A(c,q)^T,(c,q)^T\rangle_X\right)\ge 0$ (see \eqref{eq:maxmon}). Consequently, by using the Cauchy-Schwarz inequality, we obtain
$$
s\|(c,q)^T\|_X^2\le \Re\left(\langle (\varphi_1,\varphi_2)^T,(c,q)^T\rangle_X\right)\le\|(\varphi_1,\varphi_2)^T\|_X\|(c,q)^T\|_X.
$$
From this we get the useful estimate $\|(c,q)^T\|_X\le s^{-1}\|(\varphi_1,\varphi_2)^T\|_X$,
that means the continuity of $(-A+s\Id)^{-1}$ from $X$ to $X$ when restricted to continuous functions in each $I_i$.

Since these continuous functions are dense in $X$, the operator $(-A+s\Id)^{-1}$ can be extended to a continuous operator from $X\to X$. By reasoning along sequences $(\varphi_1^{k},\varphi_2^{k})$ which are convergent in $X$, we see that the extended operator $(-A+s\Id)^{-1}$ gives precisely the solutions of \eqref{eqn1nh} when the derivatives ${d}/{dx}$ are understood in the usual weak-$H^1$ sense. This in particular implies that $(-A+s\Id)^{-1}(X)\subset D(A)$. One also easily sees that  $ (-A+s \Id)^{-1}$ is bounded from $X$ to $D(A)$, when $D(A)$ is equipped with the piecewise $H^1$ norm. And then the compactness from $X$ to $X$ follows from the compactness of the embedding $D(A)\subset X$.
\end{pf}
\begin{lemma}\label{pos}
For each $s\in(0,+\infty)$, the operator $(-A+s{\rm{Id}})^{-1}$ is positive.
\end{lemma}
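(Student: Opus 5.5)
We plan to prove positivity of $(-A+s\Id)^{-1}$ through a weak maximum principle for the resolvent problem, tested against the negative parts of $c$ and $q$. By density and the continuity of $(-A+s\Id)^{-1}$ from Lemma \ref{invers}, and since $X_+$ is closed, it is enough to take real $(\varphi_1,\varphi_2)^T\in X_+$. Let $(c,q)^T=(-A+s\Id)^{-1}(\varphi_1,\varphi_2)^T\in D(A)$; it is real, because the problem has real coefficients and its solution is unique. We must show $c\ge0$ and $q\ge0$.

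Write $c^-=\max(-c,0)$ and $q^-=\max(-q,0)$. These lie in $H^1(I_i)$ on each interval. Moreover $q^-$ is continuous and periodic, and $c^-$ satisfies the same continuity and jump conditions \eqref{bcn0} as $c$, because those conditions are positive multiples ($v_2/v_3$, $v_4/v_1$) and the map $t\mapsto t^-$ commutes with positive scalings. Pair the first equation $sc+v_ic_x+RP^2c-RPq=\varphi_1$ with $-c^-$ and the second $sq-q_x+Rq-RPc=\varphi_2$ with $-q^-$, then integrate over $[-2,2]$. The right-hand side is $-\int(\varphi_1c^-+\varphi_2q^-)\le0$.

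On the left we use the identities $c\,c^-=-(c^-)^2$ and $c_x c^-=-(c^-)(c^-)_x$ a.e. With these, the transport terms become the boundary expressions $\frac12\sum v_i\bigl((c^-(x_{i+1}))^2-(c^-(x_i))^2\bigr)$ together with a vanishing $q$-term, by periodicity of $q^-$. Exactly as in the computation leading to \eqref{dEdt}, the boundary expressions are $\ge0$ by \eqref{ineq}; note that in \eqref{dEdt} they appear with the opposite sign because there they were dissipative.

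The coupling terms give
$$R\int\bigl(P^2(c^-)^2+(q^-)^2\bigr)+RP\int\bigl(q\,c^-+c\,q^-\bigr).$$
Write $q=q^+-q^-$ and $c=c^+-c^-$. The cross terms $q^+c^-$ and $c^+q^-$ are nonnegative, so the expression is at least $R\int(Pc^--q^-)^2\ge0$. Altogether
$$s\int\bigl((c^-)^2+(q^-)^2\bigr)+(\text{nonnegative terms})\le0,$$
and since $s>0$ this forces $c^-=q^-=0$.

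The main obstacle is justifying the boundary bookkeeping for $c^-$ at the jumps $x=0,\pm2$. The sum $\sum_i v_i\bigl((c^-(x_{i+1}))^2-(c^-(x_i))^2\bigr)$ must reduce to terms of the form $(v_1-v_2)(c^-(-1))^2$ and $(v_3-v_4)(c^-(1))^2$. At the jumps we get, for example, $v_2(c^-(0^-))^2-v_3(c^-(0^+))^2=c^-(0^-)^2\,v_2(1-v_2/v_3)\ge0$. To confirm this we redo the calculation from the proof of Lemma \ref{lem:dEdt}, with $c$ replaced by $c^-$, checking the sign at each port. The chain rule for $t\mapsto t^-$ in $H^1$ is standard (Stampacchia).

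As an alternative if this bookkeeping fails, we can use the construction in the proof of Lemma \ref{invers}: show that $C(s)$ has positive entries and that the Neumann series $(\Id-C(s))^{-1}=\sum C(s)^n$ converges with a positive particular solution. This is because $e^{F_i(s)}$ is a nonnegative matrix, its off-diagonal entries $RP/v_i$ and $-RP$ having the right signs for integration in the respective directions. However, the mixed directions make this route more awkward, so the energy argument is preferred.
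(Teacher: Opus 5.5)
Your main argument is correct, and it takes a genuinely different route from the paper's.

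\textbf{The paper's route.} The paper proves a classical pointwise maximum principle. It first restricts to data $\varphi_1,\varphi_2$ continuous on each closed $I_i$, so that $c,q$ are $C^1$ there. It then assumes the minimum of $\{Pc(x),q(x)\}$ is negative and reaches a contradiction by evaluating the equations at the minimiser. This needs a case analysis over interior points, the continuity points $\pm1$, and the ports $0,\pm2$. At the ports, $v_3/v_2\ge1$ and $v_1/v_4\ge1$ push the minimum of $c$ to the upstream side, where $c_x(x_0^-)\le0$. General $L^2$ data are then handled by approximation.

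\textbf{Your route.} Your Stampacchia truncation needs none of this. Lemma \ref{invers} already gives $(c,q)\in D(A)$ for every $\varphi\in X$, so your initial density reduction is superfluous. Since $(c^-,q^-)$ again satisfies the domain conditions, the two structural facts the paper exploits reappear as signs in a single energy identity:
\begin{itemize}
\item the port terms $\tfrac12(v_1-v_2)c^-(-1)^2$, $\tfrac12(v_3-v_4)c^-(1)^2$, $\tfrac12 v_2(1-v_2/v_3)\,c^-(0^-)^2$ and $\tfrac12 v_4(1-v_4/v_1)\,c^-(2^-)^2$, which encode dilution at the ports;
\item the coupling $R\int(Pc^--q^-)^2+RP\int(q^+c^-+c^+q^-)$, which encodes the quasi-monotone structure of the exchange term.
\end{itemize}
Your sign check at the jumps is right, so the obstacle you flag is not one.

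\textbf{Comparison.} Your approach avoids the case analysis and any regularity or approximation step. It even yields a quantitative form, $s\|(c^-,q^-)\|_X\le\|(\varphi_1^-,\varphi_2^-)\|_X$ for real data. The paper's approach is more elementary and works pointwise.

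\textbf{Drop the fallback.} It is wrong as stated. The matrix $e^{F_i(s)}$ is not nonnegative: its lower-left entry is $-RP\,e^{a_i}\sinh(b_i)/b_i<0$, see \eqref{eq:Mi3}. Moreover, $\Delta(s)>0$ for all $s>0$, since $\Delta$ has no zeros there by Proposition \ref{prop:Reneg} and tends to $+\infty$. Then $(\mu_1-1)(\mu_2-1)<0$ for the eigenvalues of $C(s)$, so $C(s)$ has a real eigenvalue larger than $1$ and $\sum_n C(s)^n$ diverges. Intuitively, shooting $q$ in the $+x$ direction runs against its transport direction.
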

\begin{pf}
We are going to show that for any $s>0$ we have $(-A+s\Id)^{-1}X_+\subset X_+$ , that is, if $\varphi_1,\varphi_2\geq 0$ are given functions in $X$, and we have $(c,q)$ satisfying
\begin{equation*}
\left\{\begin{array}{rll}
v_i c_x+RP^2c-RPq+s c&=\,\varphi_1\ge 0 &\hbox{ (a.e.)}\\
-q_x-RPc+Rq+s q&=\, \varphi_2\ge 0 &\hbox{ (a.e.)}
\end{array}
\right.
\end{equation*}
and the boundary conditions \eqref{eqn0t}, then the solution $(c,q)\in D(A)$ is also non-negative.

Let us start again with the case where the functions $\varphi_1,\varphi_2$ are continuous on each closed interval $I_i$, and therefore the solutions $c(x),q(x)$ are of class $\cC^1$ on each interval. Suppose that the absolute minimum of the set $\{Pc(x),q(x)\,;\ -2\le x\le2\}\subset\R$  is $Pc(x_0)<0$ with $x_0$ being an interior point of some of the intervals, and we arrive at a contradiction:
\begin{equation}\label{contc}
\underbrace{v_i c_x(x_0)}_{=0}+RP\underbrace{(Pc(x_0)-q(x_0))}_{\le 0}+ \underbrace{s c(x_0)}_{<0}=\underbrace{\varphi_1(x_0)}_{\ge 0}.
\end{equation}
If this minimum is strictly negative but it is attained at $q(x_0)$ with $x_0$ being interior to some of the intervals, the situation would be similar, but now looking at the second equation:
\begin{equation}\label{contq}
\underbrace{-q_x(x_0)}_{=0}+R\underbrace{(q(x_0)-Pc(x_0))}_{\le 0}+ \underbrace{s q(x_0)}_{<0}=\underbrace{\varphi_2(x_0)}_{\ge 0}.
\end{equation}
And this is also a contradiction.

This strictly negative minimum can also be achieved at $q(x_0)$ with $x_0=-2,-1,0,1,2$ (points where $q(x)$ is continuous, although it may not be differentiable) or else at  $Pc(x_0)$ with $x_0=\pm 1$ (points where $c(x)$ is continuous, although it may not be differentiable). In these cases, we will necessarily have that $q_x(x_0^+)\ge 0$ or that $v_ic_x(x_0^-)\leq 0$, and we will also arrive at a contradiction by using the equations \eqref{contc} or \eqref{contq}.

The situation is slightly different if this strictly negative minimum is achieved at $Pc(x_0)$ at the points $x_0=\pm2$ or $x_0=0$. In $x_0=0$ we have that $c(0^-)=(v_3/v_2)c(0^+)$ and then the absolute minimum is achieved at $Pc(0^-)$, since $v_3/v_2\ge 1$, and necessarily $c_x(0^-)\le 0$ and the same contradiction is obtained by looking at \eqref{contc}. At the point $x_0=\pm2$ the situation is similar (remember that $c(2^-)= (v_1/v_4) c(-2^+)$  and $v_1/v_4\ge1$), and $c_x(2^-)\le 0$. Because of the same equation \eqref{contc}, we also arrive at a contradiction.

To extend the positivity of $(-A+sI)^{-1}$ to general $(\varphi_1,\varphi_2)^T\in X$, one can approximate in the $L^2$ norm $(\varphi_1,\varphi_2)^T$ by continuous functions $(\varphi_1^k,\varphi_2^k)^T$ and use that the $L^2$ convergence of
$(-A+s\Id)^{-1}(\varphi_1^k,\varphi_2^k)^T$ implies pointwise convergence almost everywhere in $x$.
\end{pf}
\begin{lemma}\label{lem:posop}
$e^{tA}$ is also a positive operator from $X$ to $X$ for $t\geq 0$.
\end{lemma}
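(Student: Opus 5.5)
The plan is to deduce positivity of the semigroup from positivity of the resolvent (Lemma \ref{pos}) through the exponential formula for $C_0$-semigroups. By the Proposition of Section \ref{sec:functional}, $A$ generates a $C_0$-semigroup of contractions. Lemma \ref{invers} shows that every $s>0$ lies in the resolvent set of $A$, with $\|(-A+s\Id)^{-1}\|\le s^{-1}$. These are exactly the hypotheses of the Post--Widder type exponential formula (see Pazy, Theorem 1.8.3):
$$e^{tA}\varphi=\lim_{n\to\infty}\left(\Id-\tfrac{t}{n}A\right)^{-n}\varphi=\lim_{n\to\infty}\left(\tfrac{n}{t}\right)^{n}\left(-A+\tfrac{n}{t}\Id\right)^{-n}\varphi ,$$
for every $\varphi\in X$ and $t>0$, with convergence in the norm of $X$.

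The key steps are as follows. First, fix $\varphi=(\varphi_1,\varphi_2)^T\in X_+$ and $t>0$; the case $t=0$ is trivial since $e^{0A}=\Id$. Second, Lemma \ref{pos} with $s=n/t>0$ gives $(-A+\frac{n}{t}\Id)^{-1}X_+\subset X_+$. Iterating $n$ times and multiplying by the positive scalar $(n/t)^n$ shows that every approximant $\psi_n:=(\frac nt)^n(-A+\frac nt\Id)^{-n}\varphi$ lies in $X_+$. Third, we pass to the limit. $X_+$ is closed in $X$, because $L^2$ convergence implies a.e. convergence along a subsequence and so preserves a.e. nonnegativity of $c$ and $q$. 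Hence $e^{tA}\varphi=\lim\psi_n\in X_+$. Finally, for complex-valued data, positivity is understood on the real subspace. Since $A$ has real coefficients and real boundary conditions, $e^{tA}$ commutes with complex conjugation, so nothing further is needed.

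The only point requiring care is the applicability of the exponential formula: it needs a genuine $C_0$-semigroup together with resolvent bounds for all large real $s$. Both are already available. If one prefers to avoid citing it, an alternative is the Yosida approximation $A_s=sA(s\Id-A)^{-1}=s^2(s\Id-A)^{-1}-s\Id$. Then $e^{tA_s}=e^{-st}\exp\big(ts^2(s\Id-A)^{-1}\big)$ is a power series with nonnegative coefficients in a positive operator, hence positive, and $e^{tA_s}\varphi\to e^{tA}\varphi$ as $s\to\infty$. Either route reduces the lemma to Lemma \ref{pos} and the closedness of the cone, so I do not expect a real obstacle.
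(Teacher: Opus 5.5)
Your proof is correct and follows essentially the same route as the paper. The paper also combines the exponential formula $e^{tA}=\lim_{n\to\infty}(\frac{n}{t})^n\bigl(-A+\frac{n}{t}\Id\bigr)^{-n}$ with the positivity of the resolvent from Lemma \ref{pos}; you additionally spell out the closedness of $X_+$ under $L^2$ limits, which the paper leaves implicit, and your Yosida-approximation variant is an equally valid alternative.
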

\begin{pf}
We use the formula from K.J. Engel and R. Nagel (\cite{EngelNagel}, Corollary IV.2.5):
$$\exp(tA)\begin{pmatrix}c\\q\end{pmatrix}=\lim_{n\to\infty}\left(\Id-\dfrac{t}{n}A\right)^{-n}\begin{pmatrix}c\\q\end{pmatrix}=
\lim_{n\to\infty}\left(\dfrac{n}{t}\right)^{n}\left[\left(-A+\dfrac{n}{t}\Id\right)^{-1}\right]^n\begin{pmatrix}c\\q\end{pmatrix},
$$
for every $t>0$. It follows from Lemma \ref{pos} that if $(c,q)^T\in X_+$ then $\exp(tA)(c,q)^T\in X_+$.
\end{pf}
\begin{lemma}
For $v_1>v_2$ and $v_3>v_4$, the spectral radius $\r_0:=\r((-A+\Id)^{-1})$ is strictly positive, with a (non-optimal) lower bound
\begin{equation*}
\r_0 \geq \ \dfrac{Q^p}{(R+1)Q^p-RP v^{min}} >0
\end{equation*}
with
\begin{equation}\label{eq:Qp}
 Q^p > \  Q_0 :=\dfrac{1 }{RP}\left(v^{min} \,\dfrac{v^{max}-v^{min}}{2}+v^{max}\left(RP^2+1\right)  \right)  >0
\end{equation}
and $v^{max}:=\max\{v_1,v_2,v_3, v_4\}$ and $v^{min}:=\min\{v_1,v_2,v_3, v_4\}$.
\end{lemma}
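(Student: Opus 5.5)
We will prove the lower bound with a sub-solution (Collatz–Wielandt) argument for the positive compact operator $B:=(-A+\Id)^{-1}$ from Lemmas \ref{invers} and \ref{pos}. The key fact is the following. If there is $u\in X_+$, $u\neq 0$, and $\mu>0$ with $Bu\ge \mu u$, then $B^n u\ge \mu^n u$ for all $n$, since $B$ is positive. Hence $\|B^n\|\,\|u\|\ge \mu^n\|u\|$ (monotonicity of the $L^2$ norm on $X_+$), and Gelfand's formula gives $\r_0\ge\mu$. Equivalently, we look for $w=(c,q)^T\in D(A)$ with $w\ge 0$ and $f:=(-A+\Id)w\ge 0$ satisfying $f\le \mu^{-1}w$; then $u=f$ works, because $Bf=w\ge\mu f$. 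So the task is to build an explicit positive test function $w$ in $D(A)$ for which $(-A+\Id)w$ is nonnegative and bounded above by a constant multiple $\mu^{-1}w$, with $\mu$ equal to the stated bound.

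For the test function we try $q\equiv Q^p$ constant, which is continuous and periodic, so it satisfies the $q$-conditions in \eqref{bcn0}. For $c$ we take a piecewise affine (or piecewise constant) profile $c=c_i(x)$ on $I_i$, with values in $[v^{min}/v^{max},1]$ roughly proportional to $1/v_i$. The jump conditions $v_1c(-2^+)=v_4c(2^-)$ and $v_3c(0^+)=v_2c(0^-)$ must hold, together with continuity at $\pm1$. Since $v_1>v_2$ and $v_3>v_4$, the jumps go in the right directions to close the cycle with a linear decrease on each zone. The slopes are then bounded by about $(v^{max}-v^{min})/v^{min}$ over intervals of length 1, and this is where the term $v^{min}(v^{max}-v^{min})/2$ in $Q_0$ should come from.

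Next we compute the two components of $f$. The first is $f_1=v_ic_x+(RP^2+1)c-RPQ^p$, and we need $f_1\ge0$ everywhere. The second is $f_2=(R+1)Q^p-RPc$, which is positive once $Q^p>Pc$. The upper bound $f\le \mu^{-1}w$ must hold in both components. In the $q$-component it reads $(R+1)Q^p-RPc\le\mu^{-1}Q^p$, which forces $\mu^{-1}\ge (R+1)-RPc_{min}/Q^p$. Choosing the normalization so that $c_{min}$ corresponds to $v^{min}$ gives exactly $\mu = Q^p/((R+1)Q^p-RPv^{min})$. The condition $Q^p>Q_0$ is precisely what makes the other requirements compatible. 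These are the $c$-component inequalities $f_1\ge0$ and $f_1\le \mu^{-1}c$, which with the slope bound from the previous paragraph reduce to a linear inequality in $Q^p$; it should give $RPQ^p> v^{min}(v^{max}-v^{min})/2+v^{max}(RP^2+1)$, which is \eqref{eq:Qp}. The positivity of the final denominator follows from $Q^p>Q_0\ge Pv^{max}/1\cdots$, which gives $(R+1)Q^p>RPv^{min}$.

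The main obstacle is the second step: choosing the exact shape of $c$ (its slopes and normalization) so that a single constant $\mu$ works in both components, on all four zones and at the jump points simultaneously. We would first try $c$ linear on each zone with slope chosen to absorb the jumps. If the inequalities fail near the jumps, we would switch to an exponential profile solving $v_ic_x=-\kappa c$ for a suitable $\kappa$. A secondary technical point is that $f$ may fail to be continuous. This does not matter, because we only need $f\in X_+$ and the argument works in $L^2$.
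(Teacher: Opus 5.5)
Your abstract step is sound: if $u\in X_+\setminus\{0\}$ and $Bu\ge\mu u$, then $r_0\ge\mu$. The gap is in how you instantiate it, and it sits exactly where you expect $Q^p>Q_0$ to help.

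\textbf{The sign of $f_1$ defeats your construction.} Taking $u=f=(-A+\Id)w$ forces $f\ge 0$, in particular $f_1=v_ic_x+(RP^2+1)c-RPQ^p\ge 0$ everywhere. This is an \emph{upper} bound on $Q^p$. With your normalization $c_{min}=v^{min}$, at a point where $c=v^{min}$ the condition $Q^p>Q_0$ gives $f_1<v_ic_x-(v^{max}-v^{min})\bigl(RP^2+1+\tfrac{v^{min}}{2}\bigr)$. So $f_1\ge 0$ would need a slope far beyond your own bound, under which $v_ic_x$ is at most of order $v^{max}(v^{max}-v^{min})/v^{min}$. That is impossible as soon as $RP^2+1\ge v^{max}/v^{min}$; in the case study, $RP^2+1\approx 20$ while $v^{max}/v^{min}=1.5$. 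In the paper, $Q^p>Q_0$ is imposed precisely to make this component \emph{negative}, $\varphi^p_1<0$. So your reduction of ``$f_1\ge 0$ and $f_1\le\mu^{-1}c$'' to $Q^p>Q_0$ is false, and the construction cannot be completed as described.

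The proposed shape of $c$ is also backwards. The conditions $c(0^+)=(v_2/v_3)\,c(0^-)$ and $c(-2^+)=(v_4/v_1)\,c(2^-)$ are downward jumps, because $v_3>v_2$ and $v_1>v_4$. To close the cycle, $c$ must therefore \emph{increase} on $[-2,0]$ and on $[0,2]$, not decrease.

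\textbf{The repair.} Drop the requirement $f\ge 0$ and take $u=w$. If $w\ge 0$, $w\ne 0$ and $(-A+\Id)w\le\mu^{-1}w$, apply the positive operator $B$ to $\mu^{-1}w-(-A+\Id)w\ge 0$. This gives $Bw\ge\mu w$, and your Gelfand argument yields $r_0\ge\mu$.

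Use the paper's test function: $q\equiv Q^p$, and $c^p$ affine on $[-2,0]$ and on $[0,2]$ with $c^p(-2^+)=v_4$, $c^p(0^-)=v_3$, $c^p(0^+)=v_2$, $c^p(2^-)=v_1$. This satisfies all the domain conditions and $v^{min}\le c^p\le v^{max}$. Note that the slope printed as $(v_3-v_2)/2$ on $[-2,0]$ must be $(v_3-v_4)/2$ for the jump condition at $0$ to hold. Then:
\begin{itemize}
\item the $q$-component inequality $(R+1)Q^p-RPc^p\le\mu^{-1}Q^p$ is exactly $c^p\ge v^{min}$ when $\mu=Q^p/((R+1)Q^p-RPv^{min})$;
\item the $c$-component holds because $f_1\le 0<\mu^{-1}c^p$ once $Q^p$ is large enough.
\end{itemize}
As written, the slope term of $Q_0$ carries $v^{min}$, which controls $f_1$ only on a zone where $v_i=v^{min}$. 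With $v^{max}$ in its place, $f_1<0$ is guaranteed on all four zones. So the negative sign of $f_1$ that sinks your version is exactly what makes this one work.

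\textbf{Comparison with the paper.} The paper reaches the bound by the dual route. It applies $(\varepsilon\Id-B)^{-1}$, if it exists, to $\psi=(c^p,Q^p)^T-\varepsilon(\varphi^p_1,\varphi^p_2)^T\ge 0$ and obtains the mixed-sign vector $-(\varphi^p_1,\varphi^p_2)^T$. Hence the resolvent is not positive for $\varepsilon<\varepsilon^p$, whereas the Neumann series makes it positive for $\varepsilon>r_0$. The repaired sub-solution argument avoids the resolvent altogether. It also needs only the weaker inequality $f_1\le\mu^{-1}c^p$ rather than $f_1<0$.
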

\begin{pf}
We are going to use the following general property of positive linear bounded operators $T$ in ordered Banach spaces:  if for some real $\varepsilon>0$ the resolvent operator $(\varepsilon\Id-T)^{-1}$ either does not exist or is not positive, then the spectral radius $\r(T)$ is larger than or equal to $\varepsilon$.

This general property can be found as a particular case of Prop. 4.4.1 of P. Meyer-Nieberg \cite{Meyer} (p. 248) or of Cor. 1.3 of R. Nagel \cite{Nagel}, in both cases in the context of Banach Lattices. Let us recall the argument of the proof, just to show that it is independent of the Banach Lattice structure.

One argues by contradiction. It is a well-known fact that if $\delta>\r(T)$ then there exists a new equivalent norm $\|\cdot\|'$ of the Banach space such that $\r(T)<\|T\|'<\delta$ (see for example H.M. Rodrigues and J. Sol\`a-Morales \cite{Hildeb}). Therefore $(\delta\Id-T)^{-1}$ exists because the following power series in $x=T/\delta$
$$(\delta\Id-T)^{-1}=\dfrac{1/\delta}{\Id-T/\delta}=\sum_{j=0}^\infty \delta^{-j-1}T^j$$
is convergent in the operator norm $\|\cdot\|'$, since $\|T\|'/\delta <1$, and converges to a positive operator.

We proceed with this general property in mind, to be applied to $T=(-A+\Id)^{-1}$. Let us consider the particular functions $c^p(x)=v_4+\frac{v_3-v_2}{2}(x+2)$ for $-2\le x\le 0$, and $c^p(x)=v_2+\frac{v_1-v_2}{2}x$ for $0\le x\le 2$, and the function $q^p(x)\equiv Q^p>0$, a constant to be chosen below. The functions $c^p(x),q^p(x)$ are strictly positive and satisfy the continuity and jump conditions \eqref{bcn0}  and therefore $(c^p(x),q^p(x))^T\in X_+\cap D(A)$. We have that $v^{min}\le c^p(x)\le v^{max}$.


Because of Lemma \ref{pos} with $s=1$ we know that the operator $(-A+\Id)^{-1}$ is positive. On the other hand, we observe now that
\begin{equation}\label{eqn3x}
\left.\left((-A+\Id)\begin{pmatrix} c^p\\q^p\end{pmatrix}\right)\right|_{I_i}=
\left( \begin{array}{cc}
\underbrace{v_i c^p_x+(RP^2+1)c^p-RPQ^p}_{ < 0\text{ if } Q^p \text{ large enough}}\\
\underbrace{-q^p_x}_{=0}\underbrace{-RPc^p+(R+1)Q^p}_{ > 0 \text{ if }Q^p \text{ large enough}}
\end{array}
\right) =:
\left( \begin{array}{ll} \varphi^p_1{ <0}\\ \\ \varphi^p_2{ >0}\end{array}\right)
\end{equation}
So, we will look for a $Q^p$ large enough to fulfil \eqref{eqn3x} at all $x$ at some $I_i$. More concretely, we will choose an interval $I_i$ in which $v_i=v_{min}$. The monotonicity of the functions involved makes $Q^p$ large enough, which more precisely means that
$$Q^p > \, \max\left\{ \dfrac{RP}{R+1} v^{max}, \dfrac{1 }{RP}\left(v^{min} \,\dfrac{v^{max}-v^{min}}{2}+v^{max}\left(RP^2+1\right)  \right) \right\}$$
which, indeed, is
$$ Q^p > Q_0:=\dfrac{1 }{RP}\left(v^{min} \,\dfrac{v^{max}-v^{min}}{2}+v^{max}\left(RP^2+1\right)  \right)  >0.
 $$
Then
\begin{equation*}
-(-A+\Id)^{-1}\left( \begin{array}{ll} -\varphi^p_1 > 0\\-\varphi^p_2 < 0\end{array}\right)=\begin{pmatrix} c^p>0\\q^p>0\end{pmatrix}.
\end{equation*}
Therefore, for $\varepsilon>0$
\begin{equation}\label{eqn5x}
[\varepsilon\Id-(-A+\Id)^{-1}]\left( \begin{array}{ll} -\varphi^p_1\\-\varphi^p_2\end{array}\right)=
\begin{pmatrix} c^p\\q^p\end{pmatrix}+\varepsilon
\left( \begin{array}{cc}
-v_i c^p_x-(RP^2+1)c^p+RPQ^p\\
q_x^p+RPc^p-(R+1)Q^p
\end{array}
\right)= :
\left( \begin{array}{cc}
\psi_1^p\\
\psi_2^p
\end{array}
\right),
\end{equation}
and these two functions $\psi_1^p,\psi_2^p$ are also positive if $\varepsilon$ is small enough, depending on the choice of $Q^p$. How small $ \varepsilon$ has to be is determined only by the second component of \eqref{eqn5x}, since in the first component the factor that multiplies $\varepsilon$ is $-\varphi_1^p$, which is positive. Then, it is easily seen that the condition is
\begin{equation*}
\varepsilon<\varepsilon^p=\varepsilon^p(Q^p):= \dfrac{Q^p}{(R+1)Q^p-RP v^{min}},
\end{equation*}
(observe that the previous denominator is always strictly positive).


Therefore, we have a positive and compact operator $T=(-A+\Id)^{-1}$, such that its resolvent operator $R((-A+\Id)^{-1},\lambda)=\left(\lambda\Id-(-A+\Id)^{-1}\right)^{-1}$, if it exists, is not positive when
$\lambda=\varepsilon$ because it maps $(\psi_1>0,\psi_2>0)^T$ to $(-\varphi^p_1>0,-\varphi^p_2<0)^T$. And by the general considerations made at the beginning of this proof, necessarily
\begin{equation}\label{eq:eps0r0}
0<\varepsilon < \varepsilon^p\le \r_0={\rm r}((-A+\Id)^{-1}).
\end{equation}
\end{pf}
\begin{pf}({\bf of Theorem \ref{thfa}})
Because of the previous results in the above four lemmas in this section, that is, compactness, positivity, and strict positivity of the spectral radius, we can apply the Theorem of Krein-Rutman (see M.G. Krein and M.A. Rutman \cite{KR}, or Theorem 4.1.4 of \cite{Meyer}) to the operator $T=(-A+\Id)^{-1}$ in $X$ relative to the closed convex positive reproducing cone $X_+$. As a consequence, its spectral radius $\r_0$ is an eigenvalue of $T$ with a non-negative eigenfunction $(c_0,q_0)^T$, that is $(-A+\Id)^{-1}(c_0,q_0)^T=\r_0(c_0,q_0)^T$, $(-A+\Id)(c_0,q_0)^T=(1/\r_0)(c_0,q_0)^T$, and $A(c_0,q_0)^T=(1-(1/\r_0))(c_0,q_0)^T$. Observe that if $\mu$ is an eigenvalue of $T$, then $\lambda=1-1/\mu$ is an eigenvalue of $A$, and with the same eigenfunctions. Hence,
$$\Re (\lambda) = 1-  \dfrac{\Re(\mu)}{|\mu|^2} \leq 1-\dfrac{1}{\r_0}.$$
It must be fulfilled that the largest real eigenvalue is indeed $\lambda_0=1-\frac{1}{\r_0}$, because:  as $\r_0$ is an eigenvalue of $T$, this $\lambda_0$ is an eigenvalue of $A$; and is the largest one because  $\Re (\lambda)\leq \lambda_0$, which we know is strictly negative by Proposition \ref{prop:Reneg} (when inequalities \eqref{ineq} hold in the strict sense).

The relation between $\lambda_0$ and $Q^p$ comes from the inequality  $\varepsilon <\varepsilon^p\le\r_0$ (see \eqref{eq:eps0r0}). Consequently, $\lambda_0>1-\dfrac{1}{\varepsilon^p}=:- M^p$, where
\begin{equation}\label{eps00}
-M^p = -M^p(Q^p) := -R+\dfrac{RPv^{min}}{Q^p} <0
\end{equation}
which is strictly increasing in $Q^p$. Also, observe that
\begin{equation}\label{eq:Rlow}
-R \leq -M^p(Q^p)< \lambda_0
\end{equation}
which, indeed, is achieved in \eqref{eps00} if we take $Q^p=+\infty$.

If we take the best possible $Q^p$ in \eqref{eq:Qp}, that is, $Q^p\to Q_0 =\dfrac{1 }{RP}\left(v^{min} \,\dfrac{v^{max}-v^{min}}{2}+v^{max}\left(RP^2+1\right)  \right)$, we then have
$$-M^p(Q^p) \to -M_0 := -R+\dfrac{R^2P^2v^{min}}{v^{min}\frac{v^{max}-v^{min}}{2} +v^{max}(RP^2+1)} \leq \lambda_0.$$

Recall that there are two different statements under the name of Theorem of Krein-Rutman (even in the original paper \cite{KR}), one weaker than the other. We are using the weak one, where the operator $T$ is positive, but not strictly positive. So, now it still remains to show that $c_0,q_0$ are not only non-negative functions, but also strictly positive.

We recall that the eigenfunction equations are
\begin{equation}\label{eqn0f}
\left\{ \begin{array}{rll}
\lambda_0 c_0+v_i c_{0,x}&=\,RP(-Pc_0+q_0)\quad &\hbox{for}\: x\in I_i, \; i=1,2,3,4 \\
\lambda_0 q_0-q_{0,x}&=\,R(Pc_0-q_0)\quad &\hbox{for}\: x\in I_i, \; i=1,2,3,4,
\end{array}\right.
\end{equation}
with the boundary conditions \eqref{bcn0}.

We will argue by contradiction. Suppose that at some point $x_0\in I_i$ one has $c_0(x_0)=q_0(x_0)=0$. Then, by existence and uniqueness of ordinary differential equations, one would have $c_0(x)\equiv q_0(x)\equiv 0$ in the whole $I_i$, and by the boundary conditions, this would extend to the whole of $[-2,2]$. Contradiction.

Suppose now that $c_0(x_0)=0$ and $q_0(x_0)>0$. By the first equation in \eqref{eqn0f} we see that $c_{0,x}(x_0)>0$. This would imply $c_0(x)<0$ for $x<x_0$, and this would be a contradiction if $x_0$ is in the interior of $I_i$ or is its rightmost endpoint. If it is the leftmost endpoint of $I_i$, the boundary conditions would imply that $c_0(x)$  would also vanish at the rightmost endpoint of $I_{i-1}$. Contradiction again.

Suppose now that $c_0(x_0)>0$ and $q_0(x_0)=0$. The argument is then analogous, but after the second equation in \eqref{eqn0f} instead of the first. This finishes the proof of part {\it (i)} of Theorem \ref{thfa}.

To prove part {\it (ii)} of Theorem \ref{thfa} we observe that since $e^{tA}(c_0(x),q_0(x))^T= e^{t\lambda_0}(c_0(x),q_0(x))^T$, and $e^{tA}$ is a positive operator (see Lemma \ref{lem:posop}), the only thing we need is that $(|c(x,0)|,|q(x,0)|)^T\le M(c_0(x),q_0(x))^T$ for some $M>0$. And this is always possible since $c_0(x),q_0(x)$ are strictly positive functions.

Finally, to prove part {\it (iii)} of Theorem \ref{thfa}, observe that $\lambda=0$ is an eigenvalue in the limit case   $v_1=v_2=v_3=v_4$ (see Remark \ref{rem:limcase}). So, from Proposition \ref{prop:Reneg} we can say that $\lambda_0=0$ is the dominant eigenvalue. Also from Remark \ref{rem:limcase} below, we know that the corresponding eigenfunction is $(1,P)^T$ in this case, so it is strictly positive. So, the same argument as above can be used to prove \eqref{final} in this limit case.
\end{pf}
\section{Sensitivity Analysis: variation of $\lambda_0$ with respect to the parameters}\label{sens}
In this section, we will obtain the formula for the derivative of the dominant eigenvalue $\lambda_0$ or of any other eigenvalue $\lambda$ with respect to the six different parameters of the problem. To do so, we are going to use the Adjoint Method, which we explain in the present section. Therefore, we start computing the adjoint operator $A^*$ and its domain $D(A^*)$.

We consider the following operator as the adjoint one:
\begin{equation}\label{eqnadj}
\left.\left(A^*\begin{pmatrix} c^*\\ q^*\end{pmatrix}\right)\right|_{I_i}=A_i^*\left(\left.\begin{pmatrix} c^*\\ q^*\end{pmatrix}\right|_{I_i}\right)= \begin{pmatrix}  v_i\dfrac{d}{dx}-RP^2\, Id & RP\,Id\\ RP\,Id & -\dfrac{d}{dx}-R\,Id\end{pmatrix} \begin{pmatrix} c^*\\ q^*\end{pmatrix}
\end{equation}
for $I_i=[x_i,x_{i+1}],\ i = 1,2,3,4$, with domain
$$\mathcal{D}(A^*)= \left\{  (c^*,q^*)\in H^1(-2,2)\times H^1(-2,2)\, |\, (c^*,q^*) \textrm{ fulfil } \eqref{bcnadj} \right\}$$
with
\begin{equation}\label{bcnadj}
\left\{ \begin{array}{ll}
  &c^*(x_i^-)=c^*(x_i^+) \quad \hbox{for}\: x_i=0,\pm 2,\\
  &v_1c^*(-1^-)=v_2c^*(-1^+),\\
  &v_3c^*(1^-)= v_4c^*(1^+), \\
  &q^*(x_i^-)=q^*(x_i^+) \quad \hbox{for}\: x_i=-1,0,1,\pm 2 \hbox{. ($2^+$ means $-2^-$)}.
\end{array}\right.
\end{equation}
We can check that, indeed, $\langle A (c_1,q_1)^T, (c_2,q_2)^T\rangle_X = \langle (c_1,q_1)^T,A^*(c_2,q_2)^T\rangle_X$.
Remember that the eigenvalue problem for $A$ for a certain eigenvalue $\lambda$ is given in \eqref{eqn0}-\eqref{bcn0}.
And the eigenvalue problem for $A^*$ for the eigenvalue $\overline{\lambda}$ can be written in the same form as
\begin{equation}\label{two}
\left\{ \begin{array}{rll}
\overline{\lambda} c^*-v_i c^*_x&=RP(-Pc^*+q^*)&\quad \hbox{for}\: x\in I_i, \; i=1,\dots 4, \\
\overline{\lambda} q^*+q^*_x&=R(Pc^*-q^*)&\quad \hbox{for}\: x\in I_i, \; i=1,\dots 4,
\end{array}\right.
\end{equation}
with the boundary conditions \eqref{bcnadj}.  In Fig. \ref{first} (right), we can see a plot of the eigenfunction of $A^*$, with formula \eqref{eq:fupAadj}, corresponding to the dominant eigenvalue $\lambda_0$ in the case study of Section \ref{case}.

%
%
Our objective is to obtain an expression for the partial derivatives of an eigenvalue $\lambda$
with respect to the six parameters of the problem. To do so, we will use the Adjoint Method, following the ideas of Chpt. IV.7 of G. Ioos and D.D. Joseph \cite{Ioos}. We will apply this to the dominant eigenvalue $\lambda_0$, but the deduction is for a general eigenvalue $\lambda$.

We start deriving with respect to the velocities $v_k$. This is slightly more complicated than the other parameters, since the changes in the velocities make changes in the domain $D(A)$.
We fix $k\in\{1,2,3,4\}$, and we differentiate the eigenvalue equations \eqref{eqn0} and \eqref{bcn0} with respect to $v_k$. To simplify, let us denote by $'$ the derivatives with respect to $v_k$. We obtain the following first variation problem:
\begin{equation}\label{eqn0d}
\left\{ \begin{array}{ll}
&v_i c'_x=-\lambda c'-PR(Pc'-q')-\lambda'c-\delta_{i,k}\ c_x; \\
&q'_x=\lambda q'-R(Pc'-q')+\lambda'q,
\end{array}\right.
\end{equation}
for $x\in I_i$, $i=1,2,3,4$, with the boundary conditions
\begin{equation}\label{bcn0d}
\left\{
\begin{array}{ll}
&c'(x_i^+)=c'(x_i^-), \qquad\textrm{ for }x_i=\pm 1\\
&\delta_{1,j}\ c(-2^+)+v_1 c'(-2^+)=\delta_{4,j}\ c(2^-)+v_4c'(2^-)\\
&\delta_{3,j}\ c(0^+)+v_3 c'(0^+)=\delta_{2,j}\ c(0^-)+v_2 c'(0^-)\\
&q'(x_i^-)=q'(x_i^+), \qquad\textrm{ for } x_i=\pm 2, \pm 1, 0.
\end{array}
\right.
\end{equation}
Here $(c,q)$ is the solution of \eqref{eqn0}-\eqref{bcn0}, and $\delta_{i,k}$ is the Kronecker's delta. Observe that we now have a non-homogeneous problem for $c', q'$ and $\lambda'$, both in \eqref{eqn0d} and in \eqref{bcn0d}, once  $(c,q)^T$ is supposed to be given.

Now we multiply the two equations of \eqref{eqn0d}, by the functions $\overline{c^*}$ and $\overline{q^*}$ of \eqref{two}, respectively, subtract the results, and use \eqref{two}, obtaining
\begin{equation*}
v_i(c'\bar{c}^*)_x-(q'\bar{q}^*)_x=-\lambda'(c\bar{c}^*+q\bar{q}^*)-\delta_{i,k}c_x\bar{c}^*.
\end{equation*}
Now, integrating between $x_i$ and $x_{i+1}$, summing these integrals for $i=1,2,3,4$, and using the boundary conditions \eqref{bcn0}, \eqref{bcnadj} and \eqref{bcn0d}, one gets
$$
\sum_{i=1}^4 \int_{x_{i}}^{x_{i+1}}\left(\lambda'(c(s)\bar{c}^*(s)+q(s)\bar{q}^*(s))+\delta_{i,k}c_x(s)\bar{c}^*(s)\right)\, ds=
\left\{\begin{matrix}
-c(x_k^+)\bar{c}^*(x_k^+), \text{ if } k \text{ is odd}\\
c(x_{k+1}^-)\bar{c}^*(x_{k+1}^-), \text{ if } k \text{ is even }
\end{matrix}
\right\}.
$$
Isolating $\lambda'$, we obtain
\begin{equation}\label{lapri1}
\dfrac{\partial\lambda}{\partial v_k}=
\dfrac{-\delta_{1,k}c(-2^+)\bar{c}^*(-2^+)+\delta_{2,k}c(0^-)\bar{c}^*(0^-)-\delta_{3,k}c(0^+)\bar{c}^*(0^+)+\delta_{4,k}c(2^-)\bar{c}^*(2^-)- \int_{x_k}^{x_{k+1}}c_x\bar{c}^*}
{\int_{-2}^2(c\bar{c}^*+q\bar{q}^*)}.
\end{equation}

As an interpretation of \eqref{lapri1}, we observe that each interval $I_i$ in the liquid phase has an inlet boundary point ($x_i$ if $i$ is odd, $x_{i+1}$ if $i$ is even) and an exit one (the other one). The boundary term in the numerator of \eqref{lapri1} is evaluated in one of these inlet ports, with a sign depending on whether the port is at the left or right side of the interval.

The same ideas can be used to obtain similar formulas for the derivative of $\lambda$ with respect to $R$ and to $P$. If $'$ means now derivatives with respect to $R$, the first variation equation now reads

\begin{equation*}
\left\{ \begin{array}{ll}
&v_i c'_x=-\lambda c'+RP(-Pc'+q')+P(-Pc+q)-\lambda'c \quad \hbox{for}\: x\in I_i, \; i=1,\dots 4, \\
&q'_x=\lambda q'-R(Pc'-q')-(Pc-q)+\lambda'q\quad \hbox{for}\: x\in I_i, \; i=1,\dots 4,
\end{array}\right.
\end{equation*}
with the same boundary conditions \eqref{bcn0} for $c'$ and $q'$, since $R$ does not appear in the boundary conditions. Multiplying the first equation by $\bar{c}^*$ and the second by $\bar{q}^*$, subtracting the two equations, and integrating by parts this difference, by using the boundary conditions one gets
$$
0=\int_{-2}^2 (P(-Pc+q)\bar{c}^*+(Pc-q)\bar{q}^*)-\lambda'\int_{-2}^2 (c\bar{c}^*+q\bar{q}^*)
$$
and
\begin{equation}\label{lapriR}
\dfrac{\partial\lambda}{\partial R}=
\dfrac{\int_{-2}^2 (P(-Pc+q)\bar{c}^*+(Pc-q)\bar{q}^*)}{\int_{-2}^2 (c\bar{c}^*+q\bar{q}^*)}=
-\dfrac{\int_{-2}^2 (Pc-q)(P\bar{c}^*-\bar{q}^*)}{\int_{-2}^2 (c\bar{c}^*+q\bar{q}^*)}.
\end{equation}
Proceeding as in the last case, one also obtains the derivative of $\lambda$ with respect to $P$:
\begin{equation}\label{lapriP}
\dfrac{\partial\lambda}{\partial P}=
\dfrac{\int_{-2}^2 (R(-2Pc+q)\bar{c}^*+Rc\bar{q}^*)}{\int_{-2}^2 (c\bar{c}^*+q\bar{q}^*)}.
\end{equation}
All these derivatives will be computed for the particular case study of Section \ref{case}: see the values of the derivatives in \eqref{eq:dlamdvi} and \eqref{eq:dlamdRdP} (whose complete calculations are derived in Appendix \ref{sec:derivcasestudy}). As well as the needed dominant eigenfunctions, both of $A$ and $A^*$ (see Fig. \ref{first}), whose complete calculations can be found in Appendix \ref{sec:Cji}.
%
%
\section{A limit case: equal velocities}\label{limit}
As we have seen in Remark \ref{rem:dissipE}, one of the two phenomena responsible for the dissipation comes from the conditions  $v_1>v_2$ and $v_3>v_4$, meaning that the two extraction ports $x=-1$ and $x=1$ are active. The other is  $Pc(x)\not\equiv q(x)$, meaning that the two concentrations are not in equilibrium. It is interesting to study the limit case when these two inequalities of \eqref{ineq} become equalities, namely when $v_1=v_2$ and $v_3=v_4$. In this case, the only way of energy dissipation (see \eqref{dEdt}) is the tendency to equilibrium between the two phases. Notice that in this case the four velocities must be equal: $v_1=v_2=v_3=v_4=v$. This is due to the other two inequalities in \eqref{ineq} (non-strict version), namely $v_3\ge v_2$ and $v_1\ge v_4$, meaning that $x=0$ and $x=\pm 2$ are injection ports.

In this case, the total mass of the solute, namely $Q(t)=\int_{-2}^2(c(x,t)+Pq(x,t))\ dx$ remains constant along solutions, since $\frac{d}{dt}Q(t)=(v_2-v_1)c(-1,t)+(v_4-v_3)c(1,t)$ vanishes identically, but $\frac{d}{dt}E(t)=-\int_{-2}^2 |Pc-q|^2\ dx$ (see \eqref{dEdt}), meaning that the only source of energy dissipation is the tendency to the equilibrium of phases, namely $Pc\equiv q$.

In this case, also, some of the previous calculations with the characteristic equation \eqref{eq:f} can be made in an easier way.
In fact, when $v_i=v$ for $i=1,\ldots,4$, the expression for the matrix $C(\lambda)$ in \eqref{eq:C} becomes simpler: $C(\lambda)=\exp(4F(\lambda))$, where $F(\lambda)$ is given in \eqref{eqn1v}. So, we can obtain more explicit formulas for the trace and determinant of $C(\lambda)$:
\begin{align*}
&{\rm trace}(e^{4F(\lambda)})  = e^{4\nu_1(\lambda)}+e^{4\nu_2(\lambda)}
                       = 2\exp\left( \dfrac{4\alpha(\lambda)}{v}\right)\cosh\left(4\sqrt{\left(\dfrac{\alpha(\lambda)}{v}\right)^2+\dfrac{\beta(\lambda)}{v}}\right)\\
&{\rm det}(e^{4F(\lambda)}) = e^{4\nu_1(\lambda)}e^{4\nu_2(\lambda)}
                     = \exp\left(\frac{8\alpha(\lambda)}{v}\right)
\end{align*}
where $\nu_1(\lambda),\nu_2(\lambda)$ are given in \eqref{eq:nu}, and $\alpha(\lambda),\beta(\lambda)$ are given in \eqref{alpha} and \eqref{eq:beta}, respectively.
Hence, we have
\begin{equation}\label{flim}
\Delta(\lambda)=\exp\left( \dfrac{4\alpha(\lambda)}{v}\right)\left( 2\cosh\left(4\sqrt{\left(\dfrac{\alpha(\lambda)}{v}\right)^2+\dfrac{\beta(\lambda)}{v}}\right)-\exp\left( \dfrac{4\alpha(\lambda)}{v}\right)\right)-1.
\end{equation}
Taking $\lambda=0$, and since $\beta(0)=0$, it is easy to see that
$$\Delta(0)=\exp\left( \dfrac{4\alpha(0)}{v}\right)\left(\exp\left( \dfrac{4\alpha(0)}{v}\right)+\exp\left( -\dfrac{4\alpha(0)}{v}\right)-\exp\left( \dfrac{4\alpha(0)}{v}\right)\right)-1$$
and, therefore, $\Delta(0)=0$.

The expression \eqref{flim} of $\Delta(\lambda)$ in the present case is interesting because it exhibits its structure in a much clearer way than the expression \eqref{eq:f} for the general case. We have to remember that the roots of the equation $\Delta(\lambda)=0$ are the eigenvalues of the operator $A$. This is also true for the non-real ones, which are also of interest.

In the present case, we can have an even more explicit expression for the eigenvalues. Remember that the roots of $\Delta(\lambda)=0$ are the values of $\lambda\in\C$
for which the matrix $C(\lambda)=\exp(4F(\lambda))$ has the eigenvalue $1$. This is equivalent to saying that $F(\lambda)$ admits an eigenvalue of the form  $i\pi k/{2}$, $k\in\Z$, or, again equivalently
$$
\det
\begin{pmatrix}
-\dfrac{\lambda+P^2R}{v}-\dfrac{i\pi}{2}k&\dfrac{PR}{v}\\-PR&\lambda+R-\dfrac{i\pi}{2}k
\end{pmatrix}
=0
$$
or
$$\lambda^2+\left(R(1+P^2)+\dfrac{i\pi k}{2}(v-1)\right) \lambda+
\dfrac{i\pi k}{2}R\left(v-P^2\right)+v\dfrac{\pi^2 k^2}{4}=0.
$$
That means
\begin{equation}\label{eq:lambdapm}
\lambda_k^{\pm}=\dfrac{-1}{2} \left(R(1+P^2)+\dfrac{i\pi k}{2}(v-1)\right) \pm
\dfrac{1}{2}\sqrt{\left(R(1+P^2)+\dfrac{i\pi k}{2}({v-1})\right)^2-2i\pi k R\left(v-{P^2}\right)-v\pi^2 k^2}
\end{equation}
for $k\in\mathbb{Z}$.
Separating real and imaginary parts, this can also be written as
\begin{equation}\label{eq:lamReIm}
\lambda_k^{\pm}= \left( -\frac{R(1+P^2)}{2} \pm \frac{U(k)}{2}\right) \pm \left( -\frac{\pi k}{4}(v-1) + \frac{V(k)}{2}\right) i
\end{equation}
where
\begin{equation}\label{eq:UV}
U(k) = \sqrt{\frac{\sqrt{X(k)^2+Y(k)^2}+X(k)}{2}},
\qquad
V(k) = \operatorname{sign}(Y(k))\sqrt{\frac{\sqrt{X(k)^2+Y(k)^2}-X(k)}{2}}
\end{equation}
with
\begin{equation}\label{eq:XY}
X(k) =  - \frac{\pi^2 k^2}{4}(v+1)^2 +R^2(1+P^2)^2, \quad
Y(k) = \pi  R (P^2 - 1)(v+1)k.
\end{equation}
\begin{rem}
Observe that $\lambda_k^{\pm}$ and $\lambda_{-k}^{\pm}$ are complex conjugates of each other, for all $k\in\mathbb{Z}\setminus\{0\}$.
\end{rem}
\begin{rem}\label{rem:limcase} The expression \eqref{eq:lambdapm} for $k=0$ gives $\lambda_0^+=0$ and $\lambda_0^-=-R(1+P^2)$. Looking at the problem \eqref{eqn0}-\eqref{bcn0} we see that the eigenfunctions corresponding to the eigenvalues $\lambda=0$ and $\lambda=-R(1+P^2)$ are, respectively $(c(x),q(x))^T\equiv (1,P)^T$ (equilibrium of phases), and $(c(x),q(x))^T\equiv (-P,1)^T$.
\end{rem}
\begin{lemma}\label{lem:lambdaasymp}
For $|k|$ large enough, we have
\[
\lambda_k^+
\;=\;
\begin{cases}
\displaystyle
-R
\;+\;
\frac{4R^3(P^2-1)P^2}{\pi^2 (v+1)^2}\,\frac{1}{k^2}
\;+\;
i\left(
\frac{\pi}{2}\,k
\;-\;
\frac{2R^2P^2}{\pi (v+1)}\,\frac{1}{k}
\right)
\;+\;
O(k^{-3}),
& \text{if } P \geq 1, \\[1.5em]
\displaystyle
-RP^2
\;-\;
\frac{4R^3(P^2-1)P^2}{\pi^2 (v+1)^2}\,\frac{1}{k^2}
\;+\;
i\left(
-\frac{\pi v}{2}\,k
\;+\;
\frac{2R^2P^2}{\pi (v+1)}\,\frac{1}{k}
\right)
\;+\;
O(k^{-3}),
& \text{if } P \leq 1.
\end{cases}
\]
\vspace{0.5em}
\[
\lambda_k^-
\;=\;
\begin{cases}
\displaystyle
- R P^2
\;-\;
\frac{4 R^3 (P^2 - 1) P^2}{\pi^2 (v+1)^2}\,\frac{1}{k^2}
\;+\;
i\left(
-\frac{\pi}{2}\,k
\;+\;
\frac{2 R^2 P^2}{\pi (v+1)}\,\frac{1}{k}
\right)
\;+\;
O(k^{-3}),
& \text{if } P \geq 1,
\\[1.5em]
\displaystyle
- R
\;+\;
\frac{4 R^3 (P^2-1) P^2}{\pi^2 (v+1)^2}\,\frac{1}{k^2}
\;+\;
i\left(
\frac{\pi v}{2}\,k
\;-\;
\frac{2 R^2 P^2}{\pi (v+1)}\,\frac{1}{k}
\right)
\;+\;
O(k^{-3}),
& \text{if } P \leq 1.
\end{cases}
\]
\end{lemma}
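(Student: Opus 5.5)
The plan is to bypass the closed formulas \eqref{eq:lamReIm}--\eqref{eq:XY} and work directly with the quadratic equation derived just before \eqref{eq:lambdapm},
$$p_k(\lambda):=\lambda^2+\Big(R(1+P^2)+\tfrac{i\pi k}{2}(v-1)\Big)\lambda+\tfrac{i\pi k}{2}R(v-P^2)+v\tfrac{\pi^2k^2}{4}=0 .$$
Each root will be expanded in inverse powers of $k$. Only afterwards will we decide which root carries the label $+$ and which carries $-$ in \eqref{eq:lambdapm}.

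\textbf{Step 1 (leading order).} Substitute $\lambda=\mu k$ and keep the $k^2$ terms. This gives $\mu^2+\frac{i\pi(v-1)}{2}\mu+\frac{v\pi^2}{4}=0$, whose roots are $\mu=\frac{i\pi}{2}$ and $\mu=-\frac{i\pi v}{2}$; both are checked by direct substitution. They are distinct because $v>0$, so the two roots of $p_k$ are separated by a distance of order $|k|$.

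\textbf{Step 2 (successive orders).} For each branch, insert the ansatz $\lambda=\mu k+b+c\,k^{-1}+d\,k^{-2}+O(k^{-3})$ and match powers of $k$.
\begin{itemize}
\item At order $k$ the equation is linear in $b$: $(2\mu+\frac{i\pi(v-1)}{2})b=-R(1+P^2)\mu-\frac{i\pi R(v-P^2)}{2}$. For $\mu=\frac{i\pi}{2}$ the coefficient is $\frac{i\pi(v+1)}{2}$, which gives $b=-R$. For $\mu=-\frac{i\pi v}{2}$ the coefficient is $-\frac{i\pi(v+1)}{2}$, which gives $b=-RP^2$.
\item At orders $k^0$ and $k^{-1}$ the same coefficient $\pm\frac{i\pi(v+1)}{2}$ multiplies $c$ and $d$, so each correction follows from a linear equation. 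Order $k^0$ gives $c=\mp\frac{2iR^2P^2}{\pi(v+1)}$. Order $k^{-1}$ gives the real term $d=\pm\frac{4R^3(P^2-1)P^2}{\pi^2(v+1)^2}$. These are routine computations, and the target formulas serve as a check.
\end{itemize}

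\textbf{Step 3 (rigor of the remainder).} To make the $O(k^{-3})$ rigorous, set $\lambda=k\,\Lambda(1/k)$. Then $\Lambda(\varepsilon)$ solves a quadratic whose coefficients are polynomial in $\varepsilon=1/k$, and at $\varepsilon=0$ the two roots $\frac{i\pi}{2}$ and $-\frac{i\pi v}{2}$ are simple. By the implicit function theorem each root $\Lambda(\varepsilon)$ is analytic near $\varepsilon=0$. Its Taylor coefficients are exactly those found in Step 2, and the remainder after the $\varepsilon^3$ term of $\Lambda$ becomes $O(k^{-3})$ for $\lambda$. Equivalently, one can write $\sqrt{\cdot}$ in \eqref{eq:lambdapm} as $\pm\frac{i\pi(v+1)}{2}k\sqrt{1+a_1/k+a_2/k^2}$ and expand binomially.

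\textbf{Step 4 (labelling, the main obstacle).} In \eqref{eq:lambdapm} the square root is taken with nonnegative real part, as $U(k)\ge0$ in \eqref{eq:lamReIm}. Hence $\lambda_k^+$ is the root with the larger real part. From Step 2 the real parts are $-R+O(k^{-2})$ and $-RP^2+O(k^{-2})$. So for $P>1$ the $\mu=\frac{i\pi}{2}$ branch is $\lambda_k^+$, for $P<1$ it is $\lambda_k^-$, and this holds for $|k|$ large and both signs of $k$. This is exactly the case split in the statement.

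The delicate case is $P=1$. Then both real parts equal $-R$, and the $k^{-2}$ corrections vanish. One needs the behaviour of the principal square root: at $P=1$ we have $Y(k)=0$ and $X(k)<0$, so $U(k)=0$ and $V(k)=\operatorname{sign}(0)\cdots$. The labels $\pm$ are then a matter of convention, and both displayed formulas describe the same unordered pair of roots. I would therefore state that for $P=1$ either assignment is valid up to this convention. Finally, I would check consistency with the conjugation symmetry $\lambda_{-k}^\pm=\overline{\lambda_k^\pm}$, under which the displayed formulas are invariant.
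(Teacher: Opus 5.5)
Your Steps 1--3 are correct, and they take a different route from the paper. The paper expands the closed forms $U(k)$, $V(k)$ of \eqref{eq:UV} inside \eqref{eq:lamReIm} using $\sqrt{1+x}$. You instead perturb the quadratic $p_k$ directly and invoke the implicit function theorem at the simple roots $\frac{i\pi}{2}$, $-\frac{i\pi v}{2}$. Your route is cleaner, gives a genuine $O(k^{-3})$ remainder, and avoids tracking branches of nested square roots.

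The gap is in Step 4, where you assert that the outcome is ``exactly the case split in the statement'' (and in Step 2, that the target formulas ``serve as a check''). That check fails for $\lambda_k^-$. By your own Step 1, the branch whose real part tends to $-R$ has leading term $\frac{i\pi}{2}k$, and the branch whose real part tends to $-RP^2$ has leading term $-\frac{i\pi v}{2}k$. Hence:
for $P\ge1$ your argument yields $\Im(\lambda_k^-)=-\frac{\pi v}{2}k+\frac{2R^2P^2}{\pi(v+1)}\frac1k+O(k^{-3})$, not $-\frac{\pi}{2}k+\cdots$ as displayed;
for $P\le1$ it yields $\Im(\lambda_k^-)=\frac{\pi}{2}k-\frac{2R^2P^2}{\pi(v+1)}\frac1k+O(k^{-3})$, not $\frac{\pi v}{2}k-\cdots$.
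Everything else agrees: both $\lambda_k^+$ formulas, all real parts, and the $k^{-1}$ and $k^{-2}$ corrections.

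This is a defect of the displayed statement, not of your computation: for $v\ne1$ the two $\lambda_k^-$ formulas are false. Vieta gives exactly $\lambda_k^++\lambda_k^-=-R(1+P^2)-\frac{i\pi k}{2}(v-1)$, whereas the displayed formulas make the imaginary part of this sum $O(k^{-3})$. The slip originates in \eqref{eq:lamReIm}, where the second $\pm$ multiplies the whole imaginary part. From \eqref{eq:lambdapm} one actually gets $\Im(\lambda_k^\pm)=-\frac{\pi k}{4}(v-1)\pm\frac{V(k)}{2}$. The paper's proof expands $U$ and $V$ inside \eqref{eq:lamReIm} and so inherits the error. The conjugation check $\lambda_{-k}^\pm=\overline{\lambda_k^\pm}$ you propose cannot detect this, since both versions pass it; the trace identity does.

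So you should not claim to have proved the lemma as displayed. What your argument proves is the corrected version: in $\lambda_k^-$, replace $-\frac{\pi}{2}k$ by $-\frac{\pi v}{2}k$ for $P\ge1$, and $\frac{\pi v}{2}k$ by $\frac{\pi}{2}k$ for $P\le1$. The two versions agree only when $v=1$.

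The same correction applies to your $P=1$ remark. The two corrected cases do give the same unordered pair with swapped labels. The two displayed cases do not, since their leading imaginary coefficients are $\pm\frac{\pi}{2}$ in one case and $\mp\frac{\pi v}{2}$ in the other.
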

\begin{pf}
We can compute the asymptotic expansion of $X(k),Y(k), U(k), V(k)$ and $\lambda_k^{\pm}$ in \eqref{eq:XY}, \eqref{eq:UV} and\eqref{eq:lamReIm}, when $|k|\to\infty$. The main ingredient is to use the asymptotic expansion of $\sqrt{1+x}$ for $x=|k|^{-1}$ small, and to take the dominant terms. Then, the above formulas follow.
\end{pf}
We can now prove the following properties for the eigenvalues. They can be seen in Fig. \ref{limitspect0}, for instance.
\begin{prop}\label{prop:properties}
Consider $\lambda_k^{\pm}$, $k\in\mathbb{Z}$, the eigenvalues given in \eqref{eq:lamReIm}. If $v_i=v$ for $i=1,\ldots 4$, we have the following properties:
\begin{enumerate}
\item $\lambda_0^-=-R(1+P^2)\leq \Re(\lambda_k^-)\leq\Re(\lambda_k^+)\leq \lambda_0^+=0$.
\item The sequence $\Re(\lambda_k^+)$ is decreasing in $|k|$ and tends to $\max\{-R,-P^2R\}$ when $|k|\to\infty$, and the sequence $Re(\lambda_k^-)$ is increasing in $|k|$ and tends to $\min\{-R,-P^2R\}$ when $|k|\to\infty$.
\item For $k$ large enough, the sequences $\Im(\lambda_k^{\pm})$ tend monotonically to $\pm\infty$ (if $P\geq 1$) or $\mp\infty$ (if $P<1$) as $k\to +\infty$, and the other way round when $k\to -\infty$.
\item For most values of $R,P,v$, we have $\Im(\lambda_k^\pm)\ne 0$, except for $k=0$. For exceptional values of these parameters, it can happen that $\Im(\lambda_k^{\pm})=0$, for $k=0$ and also for $k=\pm k^*$, for a certain $k^*\in\mathbb{Z}$.
\end{enumerate}
\end{prop}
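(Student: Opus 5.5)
The plan is to work directly with the quadratic whose roots are $\lambda_k^\pm$. Put $a=R(1+P^2)$, $b_k=\frac{\pi k}{2}$, and write $\lambda=-\frac{a}{2}-\frac{i b_k}{2}(v-1)+\frac{w}{2}$, with $w^2=D_k:=(a+ib_k(v-1))^2-4ib_kR(v-P^2)-4vb_k^2$. A short expansion of $D_k$ gives
\[
\Re D_k=a^2-b_k^2(v+1)^2=X(k), \qquad \Im D_k=2b_kR(P^2-1)(v+1)=Y(k),
\]
which agrees with \eqref{eq:XY}. So $w=\pm(U(k)+iV(k))$, which is \eqref{eq:lamReIm}, and
\[
\Re(\lambda_k^\pm)=\frac{-a\pm U(k)}{2}.
\]
Every item will then come from properties of $U(k)$ and $V(k)$ as functions of $k$.

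\textbf{Item 1.} Since $U(k)\ge0$, we get $\Re\lambda_k^-\le\Re\lambda_k^+$ at once. For the outer bounds, Proposition \ref{prop:Reneg} in the non-strict case gives $\Re\lambda_k^+\le0$. The lower bound $\Re\lambda_k^-\ge -a$ is equivalent to $U(k)\le a$. I will check this with the identity $U^2=\frac{|D|+X}{2}$: the condition $U\le a$ becomes $|D|\le 2a^2-X=a^2+b_k^2(v+1)^2$. Squaring turns it into $Y^2\le 4a^2b_k^2(v+1)^2$, that is $R^2(P^2-1)^2\le R^2(1+P^2)^2$, which holds. At $k=0$ we have $U=a$, which gives the endpoint values $0$ and $-a$.

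\textbf{Item 2.} I will show that $U(k)$ is decreasing in $t=b_k^2$. With $X=a^2-\gamma t$ and $Y^2=\delta t$, where $\gamma=(v+1)^2$ and $\delta=R^2(P^2-1)^2(v+1)^2$, we have $2U^2=\sqrt{X^2+\delta t}+X$. Its $t$-derivative is $\frac{-2\gamma X+\delta}{2\sqrt{X^2+\delta t}}-\gamma$, and I need this to be negative. Since $\delta<\gamma a^2$, this amounts to $\delta-2\gamma X<2\gamma\sqrt{X^2+\delta t}$. When the left side is nonnegative, squaring reduces it to $\delta^2-4\gamma\delta X<4\gamma^2\delta t$, i.e. $\delta<4\gamma(X+\gamma t)=4\gamma a^2$, which is true. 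So $\Re\lambda_k^+$ decreases in $|k|$ and $\Re\lambda_k^-$ increases. For the limits, as $t\to\infty$ we have $U\approx \frac{|Y|}{2\sqrt{-X}}\to \frac{R|P^2-1|}{1}$. This gives $\Re\lambda^\pm_k\to\frac{-R(1+P^2)\pm R|P^2-1|}{2}$, which equals $\max\{-R,-RP^2\}$ or $\min\{-R,-RP^2\}$. It is also consistent with Lemma \ref{lem:lambdaasymp}.

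\textbf{Item 3.} The monotonicity for large $k$ follows from Lemma \ref{lem:lambdaasymp}. The imaginary parts there are $\pm\frac{\pi}{2}k+O(1/k)$ or $\mp\frac{\pi v}{2}k+O(1/k)$, whose leading coefficients are nonzero, so they are eventually monotone with the stated signs. One can also differentiate the expansion, since $V$ is smooth in $k$ for large $|k|$.

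\textbf{Item 4.} This is the step I expect to be hardest. From \eqref{eq:lamReIm},
\[
\Im\lambda_k^\pm=\pm\Bigl(-\tfrac{\pi k}{4}(v-1)+\tfrac{V(k)}{2}\Bigr),
\]
so it vanishes exactly when $V(k)=\frac{\pi k}{2}(v-1)$. I would first check that $k=0$ is always a root. Otherwise, a real eigenvalue $\lambda$ makes $F(\lambda)$ a real matrix with eigenvalue $i\pi k/2$. Then the characteristic polynomial must have trace zero, $\lambda+R-\frac{\lambda+P^2R}{v}=0$, and determinant $\frac{\pi^2k^2}{4}$. The trace condition fixes a single real $\lambda^*=\frac{R(P^2-v)}{v-1}$ (for $v\ne1$). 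The determinant condition then requires $\det F(\lambda^*)=\pi^2k^2/4$ for some integer $k$. For generic parameters no integer satisfies this, and in the exceptional case the solutions are exactly the pair $k=\pm k^*$. I would present this trace/determinant argument instead of working with $V$ directly, and treat the degenerate case $v=1$ separately, where $\lambda$ real forces $P^2=1$.
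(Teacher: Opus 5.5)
Your proof is correct in substance. For items 2 and 4 it takes a different route from the paper.

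\textbf{Comparison with the paper.}
\begin{itemize}
\item Item 1. The paper uses Proposition~\ref{prop:Reneg} to get $U(k)\le R(1+P^2)$ and reads off both bounds. Your check $Y^2\le 4a^2b_k^2(v+1)^2$ proves the same inequality with no spectral input.
\item Item 2. The paper differentiates $U$ in $k$, asserts that $dU/dk$ vanishes only at $k=0$, and takes the direction of monotonicity from the expansions in Lemma~\ref{lem:lambdaasymp}. Your sign computation for $\frac{d}{dt}(2U^2)$ with $t=b_k^2$ is self-contained and covers all $k$ at once.
\item Item 4. The paper squares $\pi k(v-1)=2V(k)$. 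Your trace/determinant argument is cleaner and tells you more:
\begin{itemize}
\item for $k\neq0$ it pins any real root to $\lambda^*=R(P^2-v)/(v-1)$;
\item for $v\neq1$ it shows at most one of $\lambda_k^\pm$ is real, since their sum has imaginary part $-b_k(v-1)\neq0$;
\item it gives directly $\frac{\pi^2k^2}{4}=-\frac{\beta(\lambda^*)}{v}=\frac{R^2(v-P^2)(P^2v-1)}{v(v-1)^2}$. If you carry the paper's squaring through, you get the same value, without the factor $2$ under the root in \eqref{eq:k}.
\end{itemize}
\item Your route also avoids \eqref{eq:lamReIm}, which matters. Your own derivation gives $\Im(\lambda_k^\pm)=-\frac{\pi k}{4}(v-1)\pm\frac{V(k)}{2}$. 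This is the version consistent with the root sum $-a-ib_k(v-1)$, and it is not what \eqref{eq:lamReIm} states. So do not identify your formula with \eqref{eq:lamReIm}, and do not quote the latter at the start of item 4.
\item Item 3. Read the leading terms off your own formula, using $V(k)\sim\operatorname{sign}(Y(k))\frac{\pi|k|}{2}(v+1)$, rather than the $\lambda_k^-$ rows of Lemma~\ref{lem:lambdaasymp}. The signs, and hence the conclusion, are unchanged.
\end{itemize}

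\textbf{Loose ends.}
\begin{itemize}
\item With your $\delta$ one has $Y^2=4\delta t$, not $\delta t$. The final requirement becomes $4R^2(P^2-1)^2<4R^2(1+P^2)^2$, which still holds, so this is cosmetic.
\item For $P=1$ you have $\delta=0$, and your strict inequality collapses to $0<0$. In that case $2U^2=|X|+X$, so $U$ is only non-increasing, with $U=0$ for $|k|\ge 4R/(\pi(v+1))$. This case must be treated separately, as the paper does.
\item Finish the case $v=1$. If also $P=1$, then $\operatorname{trace}F(\lambda)\equiv0$, and every $k$ with $|k|\le 2R/\pi$ yields the real roots $-R\pm\sqrt{R^2-\pi^2k^2/4}$. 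In this doubly degenerate case the real eigenvalues are not confined to $k=0,\pm k^*$, and you should say so explicitly.
\end{itemize}
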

\begin{rem}
$\Im(\lambda_k^{\pm})$ may not be monotone when $|k|$ is small (see, for instance, Fig. \ref{chev}).
\end{rem}
\begin{pf}
We start with the proof of the first property. From Proposition \ref{prop:Reneg} we know that $\Re(\lambda_k^{+})\leq 0$, where, from \eqref{eq:lamReIm}, $\Re(\lambda_k^{+}) = -\frac{R(1+P^2)}{2}+\frac{U(k)}{2}$. Hence, $R(1+P^2)+U(k)\leq 2R(1+P^2)$ and, therefore, $\Re(\lambda_k^{-})\geq -R(1+P^2)$. As $U(k)\geq 0$ (see formula \eqref{eq:UV}), we conclude point 1 of the present proposition.

To prove the monotonicity of $\Re(\lambda_k^{\pm})$ we first consider $P\neq 1$ and start by deriving $U(k)\geq 0$ with respect to $k$:
$$\dfrac{dU}{dk} = \dfrac{\pi (v+1) \left( -(X(k)+\sqrt{X(k)^2+Y(k)^2})\frac{\pi}{2}  (v+1) \, k+Y(k) R (P^2-1))\right)}{2\sqrt{2} \sqrt{X(k)^2+Y(k)^2} \sqrt{\sqrt{X(k)^2+Y(k)^2}+X(k)}} $$
Observe that, if $P\neq 1$, it is impossible that $X(k)=Y(k)=0$, as $Y(k)=0$ happens only when $k=0$, which would imply $X(k)>0$. Hence, $dU/dk$ is well defined if $P\neq 1$. Also observe that $dU/dk = 0$ only if $k=0$, which means that $U(k)$ is monotone for $k\in (-\infty,0)$ and for $k\in (0,\infty)$. Therefore, $\Re(\lambda_k^{\pm})$ is monotone for $k\in (-\infty,0)$ and for $k\in (0,\infty)$, and the local maximum or minimum of $\Re(\lambda_k^{\pm})$ are attained at $k=0$: $\Re(\lambda_0^{\pm})=\{ \lambda_0^-,\,\lambda_0^+\} =\{ -R(1+P^2),\, 0 \}$.

To see which is the type of monotonicity, we can use Lemma \ref{lem:lambdaasymp} to asymptotically differentiate $\Re(\lambda_k^{\pm})$ with respect to $k$:
\[
\left(\Re(\lambda_k^{\pm})\right)' = \mp \dfrac{8R^3|P^2-1|P^2}{\pi^2(v+1)^2}\dfrac{1}{k^3}+O\left(\dfrac{1}{k^4}\right).
\]
This means that $\Re(\lambda_k^+)$ is decreasing in $|k|$, and the sequence $Re(\lambda_k^-)$ is increasing in $|k|$, for $|k|$ sufficiently large and, hence, for all $k\in\mathbb{Z}$ (if $P\neq 1$). When $P=1$, we have $Y(k)=0$ and $X(k)= -{\pi^2 k^2}(v+1)^2/4+4R^2$, and, hence,
\[
U(k) =\begin{cases}
 \sqrt{X(k)} & \text{if } |k| <\dfrac{4R}{\pi (v+1)}, \\[7pt]
 0 & \text{if } |k| \geq\dfrac{4R}{\pi (v+1)}.
\end{cases}
\]
So,
\[
\Re(\lambda_k^{\pm}) =\begin{cases}
 -R\left( 1\mp\sqrt{1-\dfrac{\pi^2 k^2}{16 R^2}(v+1)^2}\right) & \text{if } |k| <\dfrac{4R}{\pi (v+1)}, \\[6pt]
 -R & \text{if } |k| \geq \dfrac{4R}{\pi (v+1)},
\end{cases}
\]
concluding the monotonicity of $\Re(\lambda_k^{\pm})$ also when $P=1$. From Lemma \ref{lem:lambdaasymp}, it is direct to see that
\[
\lim_{|k|\to\infty} \Re(\lambda_k^+)
=
\begin{cases}
 -R, & \text{if } P \geq 1, \\[6pt]
 -R P^2, & \text{if } P \leq 1 ,
\end{cases}
,\qquad
\lim_{|k|\to\infty} \Re(\lambda_k^-)
=
\begin{cases}
 -R P^2, & \text{if } P\geq 1, \\[7pt]
 -R, & \text{if } P\leq 1 ,
\end{cases}
\]
which finishes the proof of part 2 of this proposition.

To prove part 3, we now use Lemma \ref{lem:lambdaasymp} to asymptotically differentiate $\Im(\lambda_k^{\pm})$ with respect to $k$:
\[
\left(\Im(\lambda_k^{\pm})\right)' = \begin{cases}
 \pm \dfrac{\pi}{2} + O\left(\dfrac{1}{k^2}\right), & \text{if } P > 1, \\[6pt]
 \mp \dfrac{\pi}{2}v+ O\left(\dfrac{1}{k^2}\right), & \text{if } P < 1 ,
 \end{cases}
\]
And also from Lemma \ref{lem:lambdaasymp}, it is direct to see that.
\[
\lim_{k\to\pm\infty} \Im(\lambda_k^+)
=\begin{cases}
 \pm \infty, & \text{if } P \geq 1, \\[6pt]
 \mp \infty, & \text{if } P \leq 1 ,
\end{cases},\qquad \
\lim_{k\to\pm\infty} \Im(\lambda_k^-)
=\begin{cases}
 \mp \infty, & \text{if } P \geq 1, \\[6pt]
 \pm \infty, & \text{if } P \leq 1 ,
\end{cases},
\]
Finally, $\Im(\lambda_k^{\pm})=0$ is equivalent to proving that $\pi k (v-1)=2\,V(k)$. Squaring the previous equality, and using the definition of $V(k),X(k),Y(k)$ given in \eqref{eq:UV} and \eqref{eq:XY}, we can see that the following equality has to be fulfilled:
$$\frac{\pi^4 k^4(v-1)^4}{4}+ \pi^2 k^2 (v-1)^2 \left( -\frac{\pi^2 k^2}{4}(v+1)^2 + R^2 (P^2+1)\right)= \pi^2 R^2 (P^2-1)^2 (v+1)^2 k^2.
$$
For $k\neq 0$, this is equivalent to
\begin{equation}\label{eq:k}
k = \pm\dfrac{2R}{\pi (v-1)}\sqrt{\dfrac{2  \left( P^2v^2-(P^4+1)v +P^2 \right)}{ v }}.
\end{equation}
So, in order to have $\Im(\lambda_k^{\pm})=0$, the right-hand side of the previous equality has to be positive and an integer. This can happen in the following two scenarios: 1) when $P>1$, if $v<1/P^2$ or $v>P^2$, and a combination of parameters $R,P,v$ such that the right hand side of \eqref{eq:k} is an integer; 2) when $P<1$, if $v<P^2$ or $v>1/P^2$, and a combination of parameters $R,P,v$ such that the right hand side of \eqref{eq:k} is an integer. Otherwise, $\Im(\lambda_k^{\pm})\neq 0$ if $k\neq 0$. This proves the last point of the present proposition.
\end{pf}
\begin{rem}\label{rem:compact}
The second and third points of Proposition~\ref{prop:properties} show that $e^{At}$ cannot be compact (nor eventually compact) when all velocities are equal. Roughly speaking, since $\Re(\sigma(A)) \subset [-R(1+P^2),0]$, the spectrum of $e^{At}$ lies in an annulus with strictly positive inner radius. Hence, the spectrum of $e^{At}$ cannot approach $0$. As $0$ is the only possible accumulation point of the spectrum of a compact operator with infinitely many eigenvalues, this confirms that $e^{At}$ cannot be compact when all velocities coincide. Presumably, when the velocities are \textsl{close to equal}, a similar phenomenon may occur.
\end{rem}

In the following figures, we can see examples of the properties of Proposition \ref{prop:properties} being fulfilled.  The next Fig \ref{limitspect0} (left) is a plot in the complex plane of the first eigenvalues $\lambda_k^\pm$ for the parameters  $R = 18$, $P = 1.03\, (>1)$ as in the case study (see Section \ref{case}) and the velocity taken as $v=1.275$, which is the average of the four velocities
$1.53, 1.12, 1.43$ and $1.02$, which are the four velocities in the aforementioned case study. It is interesting and suggesting compare it with Fig. \ref{chev}, which is a numerical simulation of part of the spectrum of the operator $A$ for the values of the parameters in the case study. In Fig. \ref{limitspect0} (right), we can see the same picture but now for $P=0.5\, (<1)$, to see the other possible scenario in Lemma \ref{lem:lambdaasymp} and Proposition \ref{prop:properties}.

\begin{figure}[htpb]
  \centering
  \includegraphics[scale=0.4]{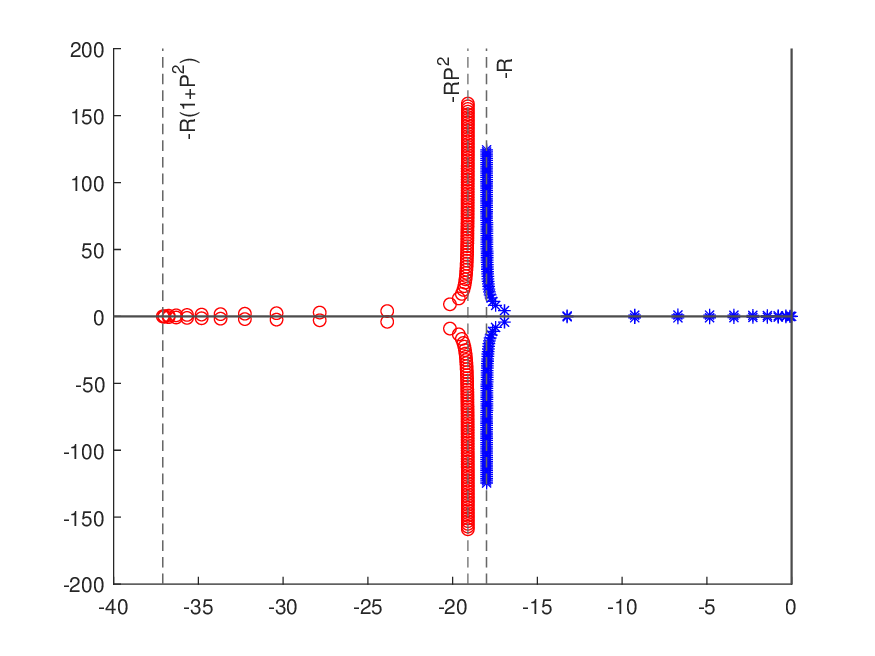}   \includegraphics[scale=0.4]{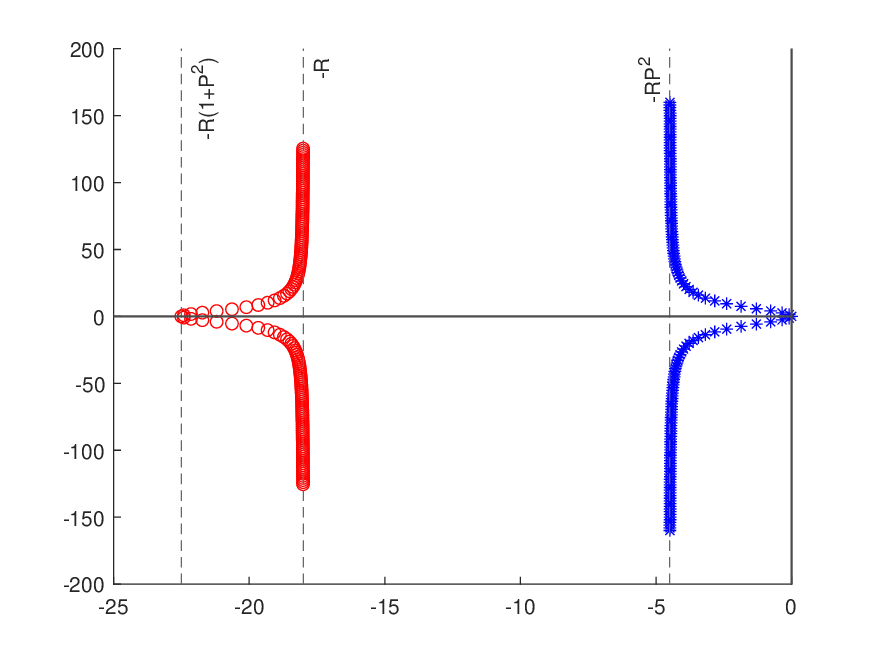}
\caption{Plot of first eigenvalues $\lambda^+$ (in blue-star) and $\lambda^-$ (in red-circle), $k=-80,\ldots 80$, for equal velocities $v=1.275$, and $R = 18$. Left: for $P=1.03\, (>1)$; right: $P=0.5\, (<1)$.} \label{limitspect0}
\end{figure}

%


\section{A case study: steady state, $\lambda_0$, dominant eigenfunction and spectrum, and sensitivity analysis}\label{case}
This section presents a case study illustrating the applicability of the results developed in this work. The practical example corresponds to the separation of the enantiomers of omeprazole. We use the physical data reported in \cite{Wei}, which describes this separation using an 8-column SMB.

The void fraction of the column is $\epsilon = 0.67$. The equilibrium constant of the enantiomers is $H_A = 2.14$ and $H_B = 2.96$ respectively, and the kinetic constant $k_A = 6.0 min^{-1}$ and $k_B = 5.0 min^{-1}$. The letters ``A'' and ``B'' designate the two enantiomers, A being the less retained by the solid phase and B the more retained one. In this brief study, we work only with the enantiomer A.

According to the so called ``triangle theory'' (see, for instance, G. Storti et al. \cite{Storti1993}, and A. Rajendran et al. \cite{Mazzotti}), to achieve the separation of the two species, the velocities $u_i$ in the different zones have to be selected such that $m_1>H_B>m_3>m_{2}>H_A>m_{4}$, where $m_i=\epsilon/(1-\epsilon) \times u_i/u_s, i=1,2,3,4$. Here we take the values
$$m_1=3.06,\quad m_{2}=2.24,\quad m_{3}=2.86 \quad\text{ and }\quad m_{4}=2.04.$$
Taking the length of each column as $L_{column} = 30$ cm and a switching time of $t_s=1.5$ min, the equivalent velocity of the solid phase is $u_s = 20$ cm/min. Then, the parameters of the dimensionless problem are (note that $L_{zone}=2L_{column}$)
$$R=k_AL_{zone}/u_s=18,\quad F=(1-\epsilon)/\epsilon=0.5,\quad P=\sqrt{FH_A}=1.03$$
$$v_1=1.53,\  v_{2}=1.12,\  v_{3}=1.43,\  \text{ and } v_{4}=1.02.$$

First, starting from these data that correspond to the species $A$, we can calculate the steady state of \eqref{eqn0t} when taking the feed rate $f_0=1$ in \eqref{bcn0t}. The result is depicted in Fig. \ref{steady}, which can be obtained in a similar procedure than the one described in Appendix \ref{sec:Cji}, but solving the corresponding 8--by--8 system.

\begin{figure}[htpb]
  \centering
  {\includegraphics[scale=0.4]{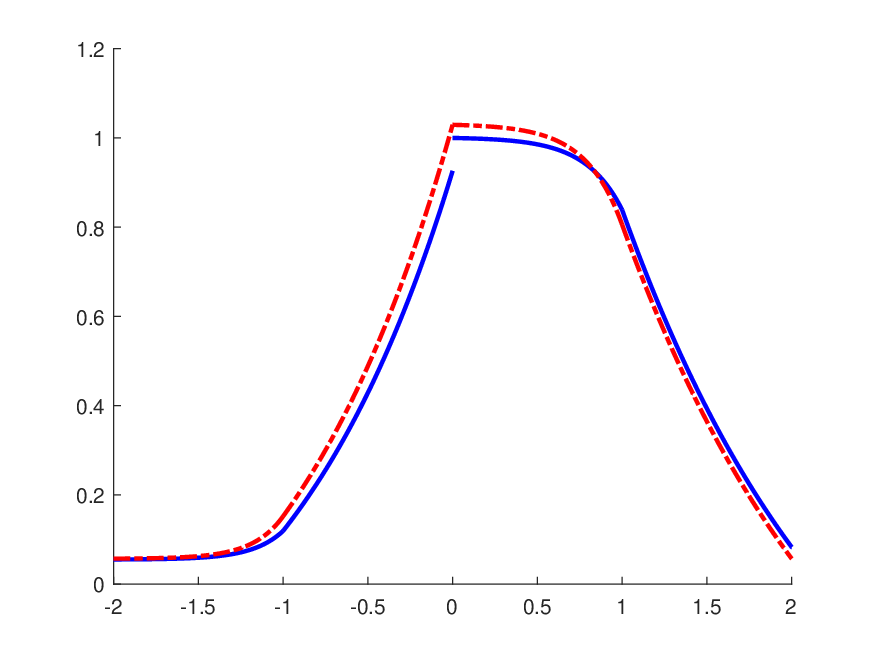}}
    \caption{Steady state for $(v_1,v_2,v_3,v_4,R,P)=(1.53, 1.12, 1.43, 1.02, 18,1.03)$ and $f_0=1$. In blue (solid line), $c$; in red (dash–dot line), $q$.}\label{steady}
\end{figure}

Now we use an initial value solver adapted to this type of problem (see J. Menacho et al. \cite{Menacho2011,Menacho2013,Menacho1}) to see the convergence to the steady state. Because of the linearity of the equations, we solve \eqref{eqn0t}-\eqref{bcn0t} with $f_0=0$, and then we see the convergence to zero. The numerical method is explained in more detail in Appendix \ref{sec:appendix3}, as well as in the above references. The resulting solution can be seen in Figure \ref{ivp}, with two different views of the time-dependent solution. Note the asymptotic profile of the solution for large values of $t$, and notice its coincidence with the profile given by the eigenfunction corresponding to the dominant eigenvalue (see Fig. \ref{first} (left), and Fig. \ref{fig:conv}).

\begin{figure}[htpb]
  {\includegraphics[scale=0.2]{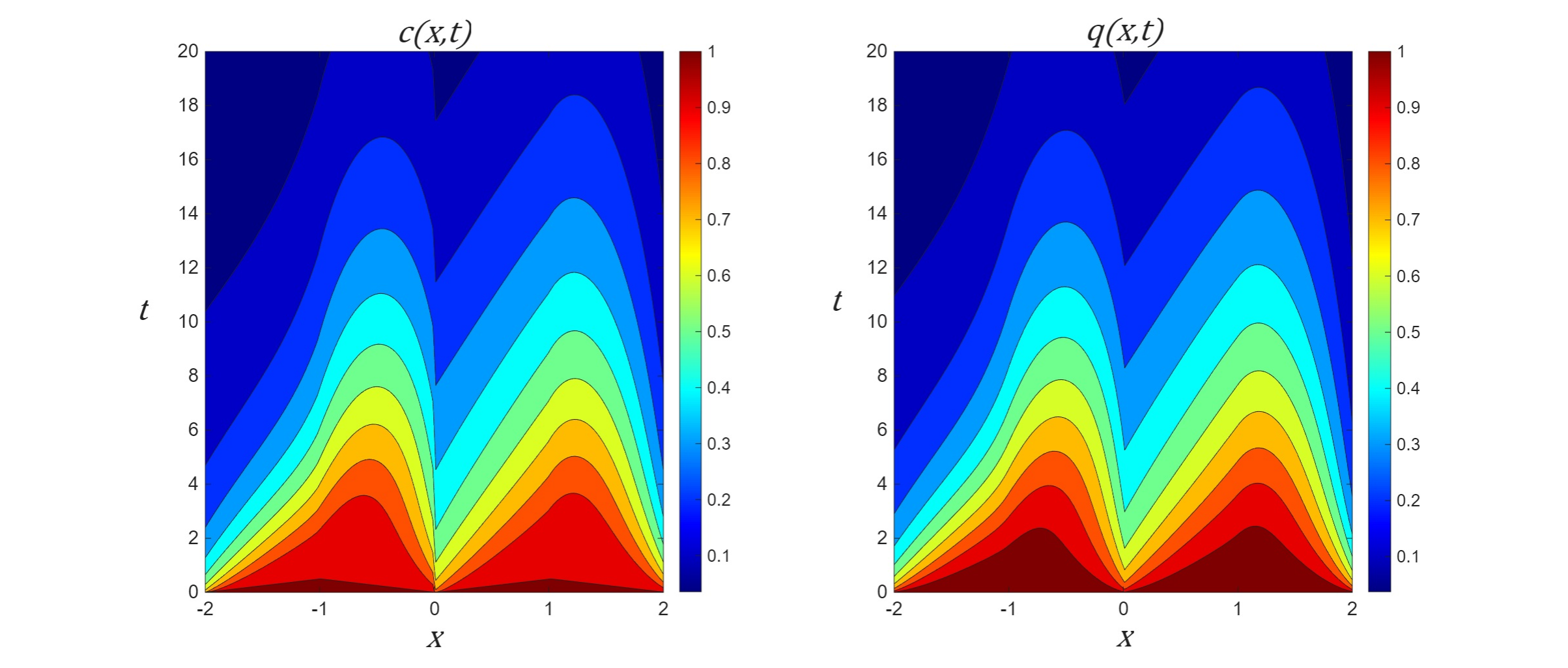}}
  \hspace{-1cm}
  {\includegraphics[scale=0.2]{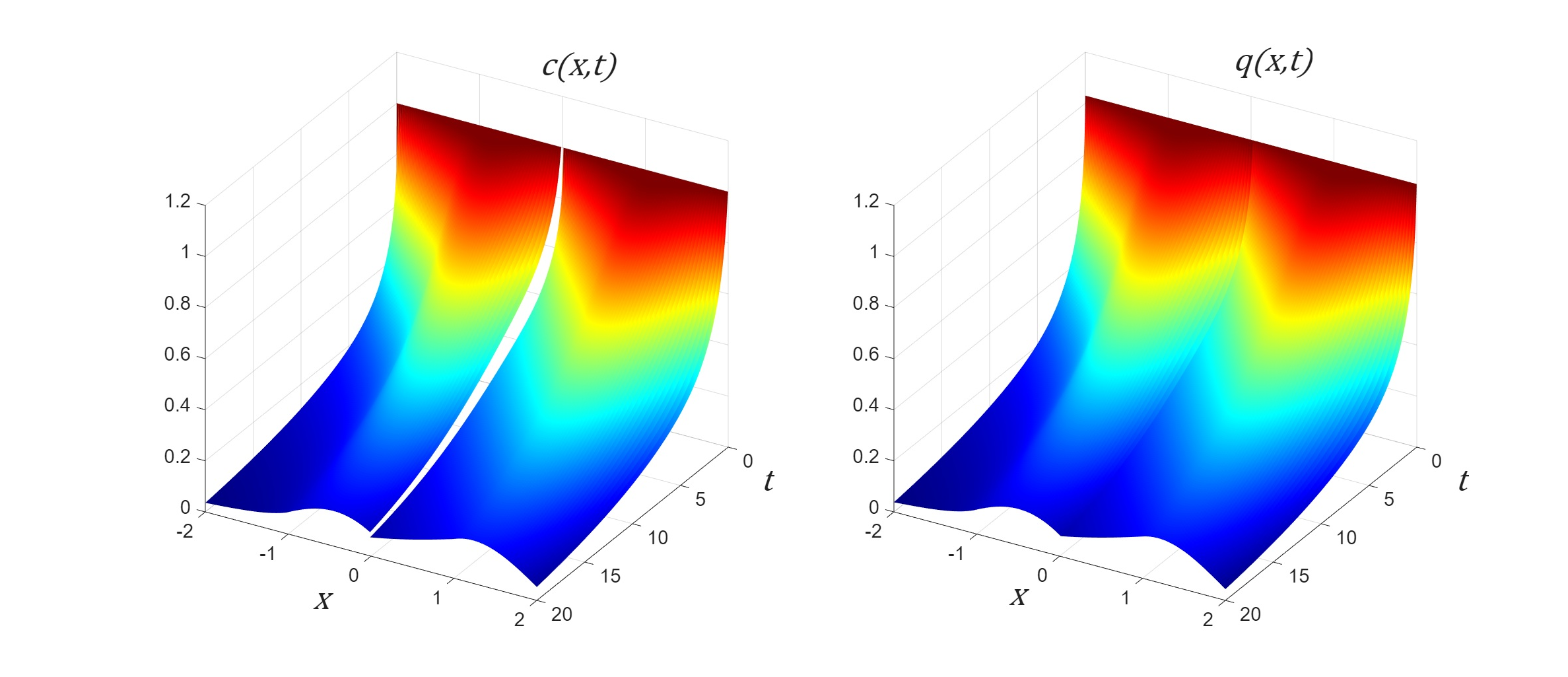}}
 \caption{Two views of the solution of the initial value problem with $f_0=0$, and initial conditions $(1,P)$, in the case study. First, contour plots of $c(x,t),q(x,t)$, respectively, at different times. Second, a view of the evolution of the solution, in terms of $t$ (and $x$). }\label{ivp}
\end{figure}

\begin{figure}[htpb]
  \centering
 \includegraphics[scale=0.5]{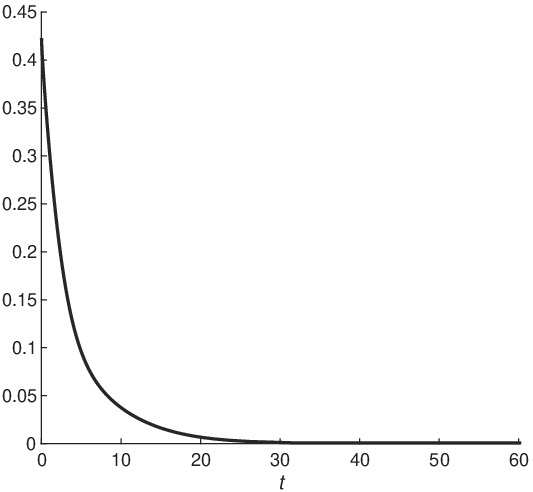}\hspace{2cm} \includegraphics[scale=0.5]{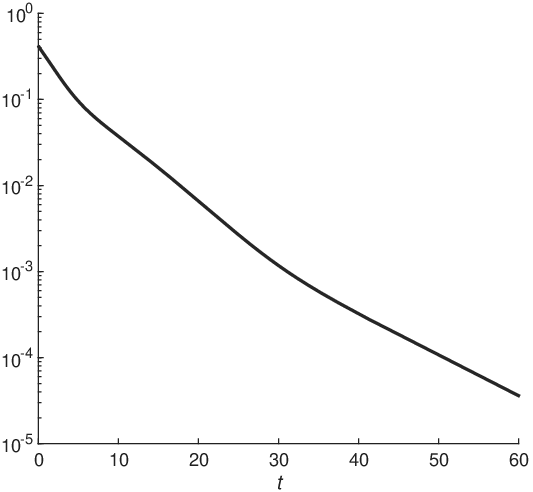}
  \caption{Left: time evolution of the root-mean-square difference between the solution and the mean-scaled eigenfunction, showing the convergence of the solution toward the eigenfunction profile. Right: same graphic, at logarithmic scale.}
  \label{fig:conv}
\end{figure}

Second, we want to compute $\lambda_0$, the dominant eigenvalue, in this case study. To numerically implement it, we proceed according to the following scheme:
\begin{enumerate}
\item We construct the matrix $C(\lambda)$ as the product defined in \eqref{eq:C} for the values of the parameters $v_i,R,P$ given above. To do this, we use the explicit formula for the matrices $M_i(\lambda)$ given in Section \ref{sec:Mi}.
\item To derive the characteristic equation $\Delta(\lambda)=0$ in \eqref{eq:f}, we recall that an explicit expression for $\det(C(\lambda))$ is provided in \eqref{detC}. However, to complete the construction of $\Delta(\lambda)$, we still need to compute the trace of the matrix $C(\lambda)$ obtained in the previous step.
\item Finally, we can find real roots of $\Delta(\lambda)=0$, being $\lambda_0$ the largest one. We can use, for instance, a bisection method, for which it is useful to know that $\lambda_0\in [-R,0]$ (see Theorem \ref{thfa} and \eqref{eq:Rlow}) or, even more precisely, that $\lambda_0\in [-M_0,0]$, where $M_0$ is given in \eqref{eps0} (see also Proof of Theorem \ref{thfa} in Section \ref{posit}).
\end{enumerate}
Note that $C(\lambda)$, as well as its determinant and trace, can be computed explicitly. But, despite dealing with 2-by-2 matrices, the computations are lengthy and intricate. That is why it is very convenient to use a symbolic or algebraic computation tool to carry out these calculations.

In the present case study, and with the help of \verb"Matlab" to implement the previous procedure, we have plotted the graph of the $\Delta(\lambda)$ in Fig. \ref{mult}, and we have obtained $\lambda_0 = -0.110377$ as the dominant eigenvalue. As $\lambda_0$ is the exponential coefficient for the dimensionless time \eqref{dimless}, this means that any transient of the chromatographic problem (the system \eqref{sysbasic} with $u=u_i$ for every zone, with boundary conditions \eqref{bcbasic}) will evolve as $e^{-t / \tau}$, where the time constant $\tau$, in this case, is $\tau=-L/(u_s \lambda_0)=13.5898$ minutes. That is, for $t=5\tau\approx68$ minutes, the magnitude of the concentration has been reduced by more than $99 \%$.

We can also plot the first eigenfunction of $A$ and the first eigenfunction of $A^*$, both corresponding to $\lambda_0 = -0.110377$. The first one is computed using formula \eqref{eq:fupA}, and the second one with formula \eqref{eq:fupAadj}. The computation of the corresponding coefficients $\X{C}{j}{i}$, $\Xe{C}{j}{i}$ is explained in detail in Appendix \ref{sec:Cji}, in general and also with the particular values of $v_i,R,P$ given above. The resulting eigenfunctions are given in Fig. \ref{first} below. Observe that when comparing Fig. \ref{first} (left) (eigenfunction of $A$) with Fig.  \ref{ivp}, we see the coincidence of the asymptotic profile with the profile of the eigenfunction. As we said in conjecture \eqref{conj}, this is something we believe is true, but that we have not been able to prove.

\begin{figure}[htpb]
  \centering
   {\includegraphics[scale=0.3]{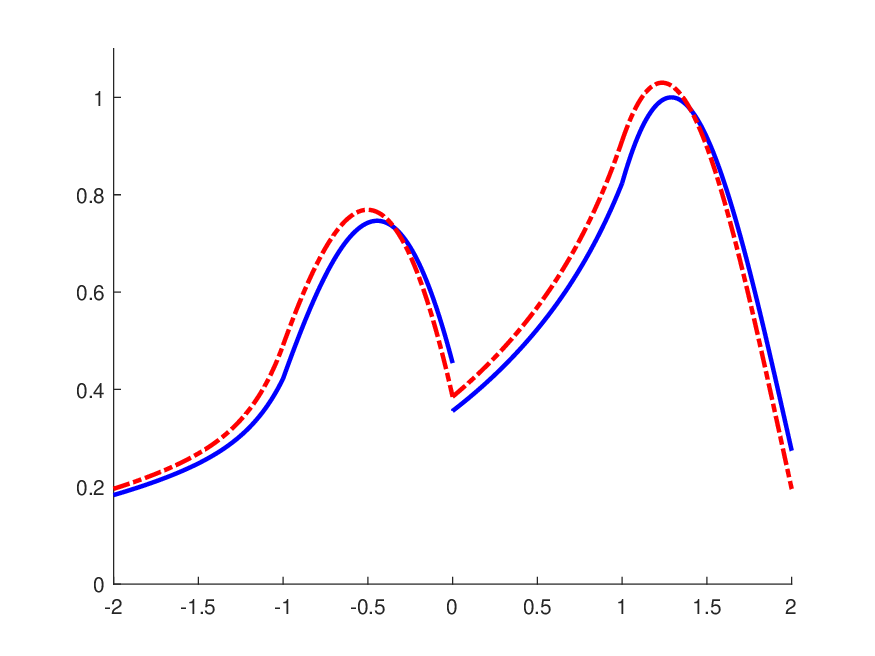}}  {\includegraphics[scale=0.3]{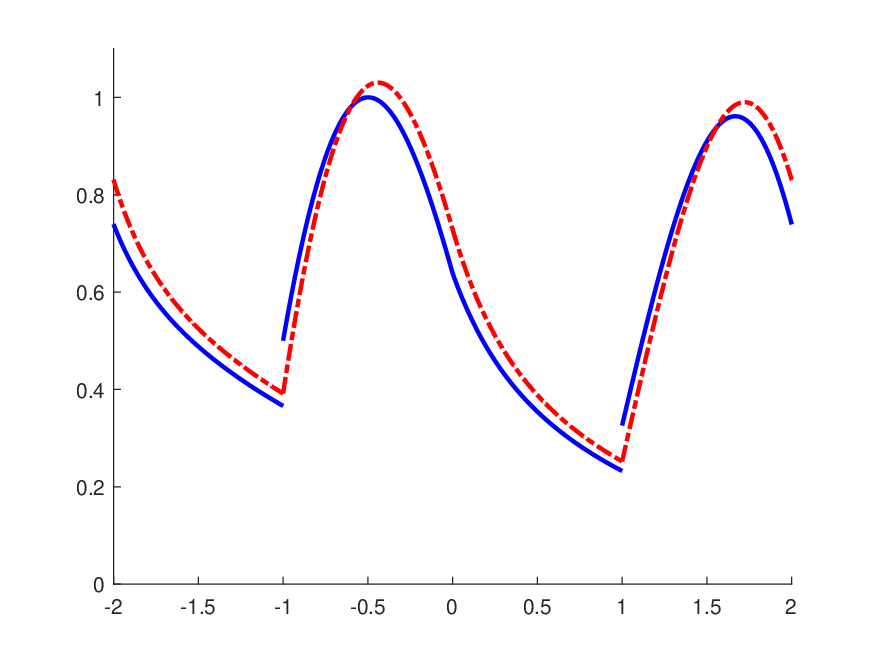}}
   \caption{Eigenfunction of $A$ (left) and of $A^*$ (right) for the dominant eigenvalue $\lambda_0$. Blue-solid line: $c(x),c^*(x)$; red dash-dot line: $q(x),q^*(x)$. The complete calculations can be found in Appendix \ref{sec:Cji}.}\label{first}
\end{figure}


Third, we perform the sensitivity analysis described in Section \ref{sens} for this case study. That is, we compute the derivatives of the dominant eigenvalue $\lambda_0$ with respect to the six parameters of the problem: $\frac{\partial\lambda_0}{\partial v_j}, \frac{\partial\lambda_0}{\partial R}, \frac{\partial\lambda_0}{\partial P}$ given in \eqref{lapri1}, \eqref{lapriR} and \eqref{lapriP}, respectively. The detailed procedure to implement the computation of these formulas is explained in Section \ref{sec:derivcasestudy} below. We have performed these computations in the case study using \verb"Matlab", obtaining the following values for each derivative:
\begin{equation}\label{eq:dlamdvi}
\dfrac{\partial\lambda_0}{\partial v_j}=
\left\{\begin{matrix}
-0.099201891332167, \text{ if } j=1\\
+0.350083208012183, \text{ if } j=2\\
-0.157873787262862, \text{ if }  j=3\\
 +0.440986116874325, \text{ if } j=4
\end{matrix}
\right.
\end{equation}
and
\begin{equation}\label{eq:dlamdRdP}
\dfrac{\partial\lambda_0}{\partial R}=0.007208020694512, \hspace{.5cm}
\dfrac{\partial\lambda_0}{\partial P}=-4.091507558661688,
\end{equation}
which have a natural interpretation, both in terms of the sign and the values. In particular, the fact $\lambda_0$ is increasing in $R$ can be observed in Fig. \ref{mult}, left.

Finally, we show in Fig. \ref{chev} a partial view of the spectrum of the operator $A$ given in \eqref{opA}, with emphasis on non-real eigenvalues. These eigenvalues have been calculated by the Chebyshev Spectral Collocation Method, with two different resolutions, namely $N=30$ and $N=50$ in order to appreciate the values where both coincide and the values where they do not, meaning numerical artifacts. Here $N$ is the maximum degree of the Chebyshev polynomials used on each interval. Observe its resemblance to Fig.\ref{limitspect0}.

\begin{figure}[htpb]
  \centering
  {\includegraphics[scale=0.28]{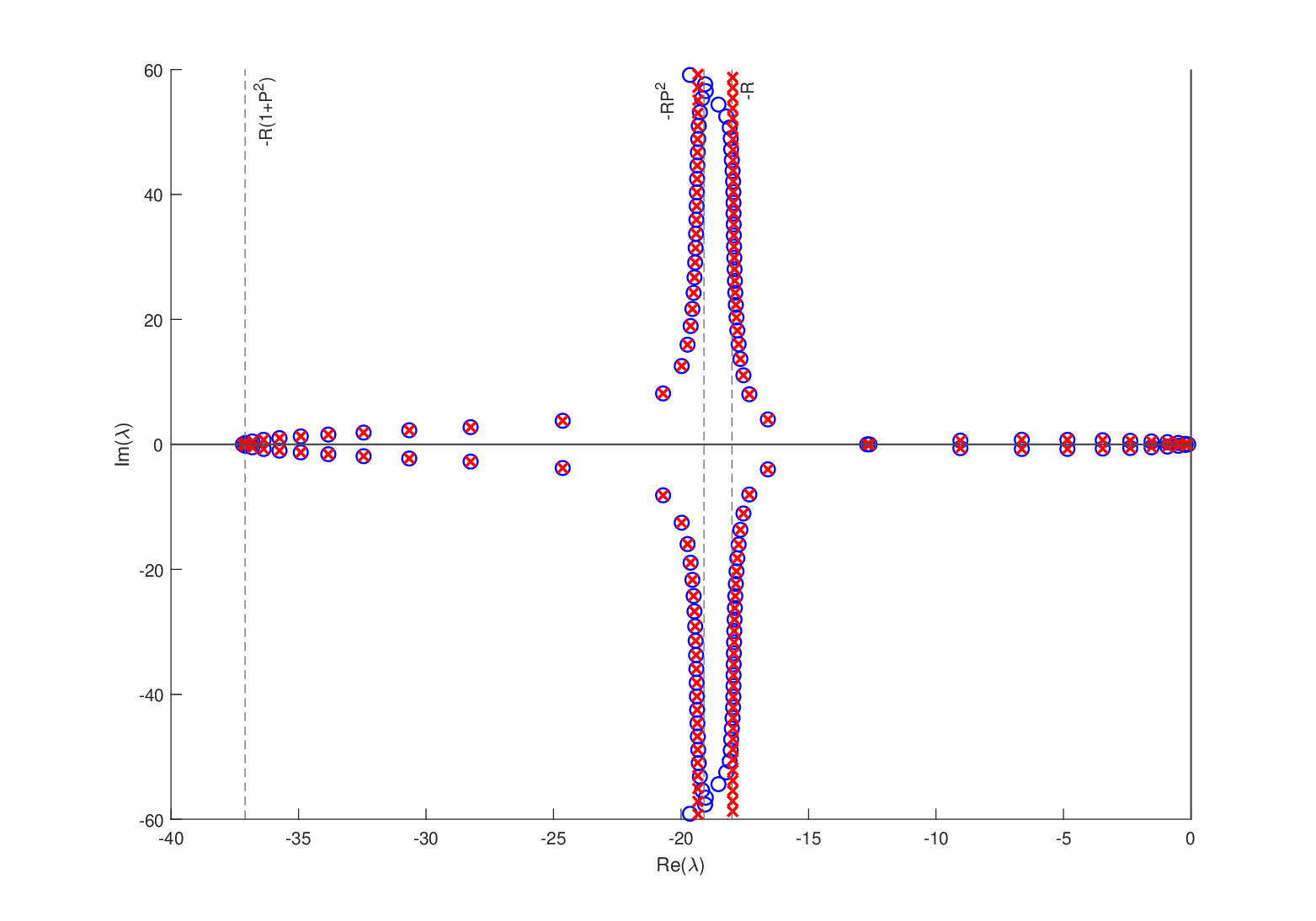}}
    \caption{Partial view of the spectrum of $A$ in the case study. In blue (circles): $N=30$ (low resolution); in red (crosses): $N=45$ (high resolution).}\label{chev}
\end{figure}

\appendix

\section{Appendix: complete calculations of $M_i$, eigenfunctions coefficients and sensitivity analysis}\label{sec:appendix1}
In this section, we present the detailed calculations used in several parts of the previous sections.  For clarity, they are placed in a separate Appendix. The formulas below and procedures turn out to be very useful when implementing numerical calculations.

\subsection{Matrices $M_i$}\label{sec:Mi}
We start by giving the explicit computation of the matrices $M_i(\lambda,x)$ defined in Section \ref{sec:carac}. These explicit formulas are very useful to plot the function $\Delta(\lambda)$ (Fig. \ref{mult}), to compute its asymptotic behaviour for $|\lambda|\to \infty$ (see Section \ref{sec:delta} and Appendix \ref{sec:appendix2}), or to compute its zeroes and, in particular, the dominant eigenvalue $\lambda_0$ (as in Section \ref{case}), for instance.

These matrices can be obtained simply by computing $M_i(\lambda)=M_i(\lambda,x_{i+1})=e^{F_i(\lambda)}$. We have an explicit formula for the matrices $M_i(\lambda)$, which in the cases where $\X{\nu}{1}{i}(\lambda)\neq \X{\nu}{2}{i}(\lambda)$ can be written as:
\begin{equation}\label{eq:Mi}
M_i(\lambda)=\dfrac{1}{\X{\nu}{1}{i}(\lambda)-\X{\nu}{2}{i}(\lambda)}
\begin{pmatrix}
-\X{\phi}{1}{i}(\lambda) e^{\X{\nu}{1}{i}(\lambda)}+\X{\phi}{2}{i}(\lambda) e^{\X{\nu}{2}{i}(\lambda)}&
\dfrac{\X{\phi}{1}{i}(\lambda) \X{\phi}{2}{i}(\lambda)(e^{\X{\nu}{1}{i}(\lambda)}-e^{\X{\nu}{2}{i}(\lambda)})}{RP}\\
(-e^{\X{\nu}{1}{i}(\lambda)}+e^{\X{\nu}{2}{i}(\lambda)})RP&
\X{\phi}{2}{i}(\lambda) e^{\X{\nu}{1}{i}(\lambda)}-\X{\phi}{1}{i}(\lambda) e^{\X{\nu}{2}{i}(\lambda)}
\end{pmatrix},
\end{equation}
where $\X{\nu}{1}{i}(\lambda)$, $\X{\nu}{2}{i}(\lambda)$ are given in \eqref{eq:nu}
and
\begin{equation}\label{eq:phiji}
\X{\phi}{j}{i}(\lambda)=\lambda+R-\X{\nu}{j}{i}(\lambda),\hspace{.5cm} j=1,2.
\end{equation}
Indeed, when $\lambda\in\mathbb{R}$, we have three possible situations:
\begin{enumerate}
\item $\X{\nu}{1}{i}(\lambda)=\overline{\X{\nu}{2}{i}(\lambda)}\in\mathbb{C}\setminus\mathbb{R}$ iff $\lambda\in (\X{\lambda}{1}{i},\X{\lambda}{2}{i})$;
\item $\X{\nu}{1}{i}(\lambda)=\X{\nu}{2}{i}(\lambda)\in\mathbb{R}$ iff $\lambda=\X{\lambda}{1}{i}$ or $\lambda=\X{\lambda}{2}{i}$;
\item $\X{\nu}{1}{i}(\lambda)\neq\X{\nu}{2}{i}(\lambda)$, both real, iff $\lambda\not\in [\X{\lambda}{1}{i},\X{\lambda}{2}{i}]$.
\end{enumerate}
where the $\X{\lambda}{j}{i}$ are the solutions of $\alpha_i(\lambda)^2+ v_i\beta(\lambda)=0$ (we recall that $\alpha_i(\lambda),\beta(\lambda)$ are given in \eqref{alpha} and \eqref{eq:beta}, respectively). That is, the solutions of
$$(v_i +1)^2\lambda^2 + 2\lambda R (v_i +1)(v_i+P^2)+(v_i-P^2)^2R^2 =0. $$
That means that
$$\X{\lambda}{1}{i}=-\dfrac{R(\sqrt{v_i}+P)^2}{v_i+1},\  \ \X{\lambda}{2}{i}=-\dfrac{R(\sqrt{v_i}-P)^2}{v_i+1}.$$
Observe that $\X{\lambda}{1}{i}<0$ and $\X{\lambda}{2}{i}\leq 0$ (actually, $\X{\lambda}{2}{i} =0$ iff  $v_i =P^2$).

Then, developing $\eqref{eq:Mi}$ according to the previous formulas and cases, we can see that the matrices $M_i(\lambda)$ can also be written in the following ways:
\begin{enumerate}
\item If $\X{\nu}{1}{i}(\lambda)=\overline{\X{\nu}{2}{i}(\lambda)}=a_i(\lambda)+b_i(\lambda)i\in\mathbb{C}\setminus\mathbb{R}$, with $a_i(\lambda)=\frac{\alpha_i(\lambda)}{v_i}\in\mathbb{R}$, $b_i(\lambda)=\frac{\sqrt{-(\alpha_i(\lambda))^2-v_i \beta(\lambda)}}{v_i}\in\mathbb{R}$:
$$M_i(\lambda)= \dfrac{e^{a_i(\lambda)}}{b_i(\lambda)} \begin{pmatrix} -\phi_{a_i}(\lambda)\sin{b_i(\lambda)}+b_i(\lambda)\cos{b_i(\lambda)} & \dfrac{RP\sin{b_i(\lambda)}}{v_i} \\
                                        -RP\sin{b_i(\lambda)} & \phi_{a_i}(\lambda)\sin{b_i(\lambda)}+b_i(\lambda)\cos{b_i(\lambda)}
                      \end{pmatrix}$$
                      where $\phi_{a_i}(\lambda)=\lambda+R-a_i(\lambda)$ .
\item If $\nu_i(\lambda):=\X{\nu}{1}{i}(\lambda)=\X{\nu}{2}{i}(\lambda)\in\mathbb{R}$ (that is $\nu_i(\lambda):=a_i(\lambda)=\frac{\alpha_i(\lambda)}{v_i}$):
$$M_i(\lambda) = e^{\nu_i(\lambda)}\begin{pmatrix} 1-\phi_i(\lambda) & \dfrac{(\phi_i(\lambda))^2}{RP} \\
                                           -RP & 1+ \phi_i(\lambda) \end{pmatrix}
$$
where $\phi_i(\lambda)$ is given in \eqref{eq:phiji}.
\item If $\X{\nu}{1}{i}(\lambda)\neq\X{\nu}{2}{i}(\lambda)$, both real. We have $\X{\nu}{1}{i}(\lambda)= a_i(\lambda)+b_i(\lambda)$, $\X{\nu}{2}{i}(\lambda)= a_i(\lambda)-b_i(\lambda)$, with $a_i(\lambda)=\frac{\alpha_i(\lambda)}{v_i}\in\mathbb{R}$, $b_i(\lambda)=\frac{\sqrt{(\alpha_i(\lambda))^2+v_i \beta(\lambda)}}{v_i}\in\mathbb{R}$. Then:
\begin{align}\label{eq:Mi3}
M_i(\lambda) &=  \dfrac{e^{a_i(\lambda)}}{b_i(\lambda)} \begin{pmatrix} -\phi_{a_i}(\lambda)\sinh{b_i(\lambda)}+b_i(\lambda)\cosh{b_i(\lambda)} & \dfrac{RP\sinh{b_i(\lambda)}}{v_i} \\
                                        -RP\sinh{b_i(\lambda)} & \phi_{a_i}(\lambda)\sinh{b_i(\lambda)}+b_i(\lambda)\cosh{b_i(\lambda)}
                      \end{pmatrix}
\end{align}
where $\phi_{a_i}(\lambda)=\lambda+R-a_i(\lambda)$.
\end{enumerate}
Observe that, in all cases, $M_i(\lambda)$ is a real matrix when $\lambda\in\mathbb{R}$.

\subsection{Computation of the constants $\X{C}{j}{i}$ in \eqref{eq:fupA} and $\Xe{C}{j}{i}$ in \eqref{eq:fupAadj} } \label{sec:Cji}

In order to find the explicit values of the coefficients $\X{C}{j}{i}$, $j=1,2$, $i=1,2,3,4$ of the eigenfunction in \eqref{eq:fupA}, we need to impose the eight boundary conditions \eqref{bcn0}. With this, we obtain the following 8-by-8 homogeneous linear system:
\begin{equation}\label{eq:sistCij}
MC=N
\end{equation}
where $C =\left( \X{C}{1}{1},\, \X{C}{2}{1},\, \X{C}{1}{2},\, \X{C}{2}{2},\, \X{C}{1}{3},\, \X{C}{2}{3},\, \X{C}{1}{4},\, \X{C}{2}{4} \right)^T$, $N =\left( 0,0,0,0,0,0,0,0 \right)^T$
and
\tiny
$$M=\begin{pmatrix}
e^{-\X{\nu}{1}{1}} & e^{-\X{\nu}{2}{1}} & -e^{-\X{\nu}{1}{2}} & -e^{-\X{\nu}{2}{2}} & 0 & 0 & 0 & 0 \vspace{0.15cm} \\
0 & 0 & 1 & 1 & -1 & -1 & 0 & 0 \vspace{0.15cm}\\
0 & 0 & 0 & 0 & e^{\X{\nu}{1}{3}} & e^{\X{\nu}{2}{3}} & -e^{\X{\nu}{1}{4}} & -e^{\X{\nu}{2}{4}} \vspace{0.15cm} \\
-e^{-2\X{\nu}{1}{1}} & -e^{-2\X{\nu}{2}{1}} &0 & 0 & 0 & 0 & e^{2\X{\nu}{1}{4}} & e^{2\X{\nu}{2}{4}} \vspace{0.15cm} \\
\X{\phi}{1}{1}e^{-\X{\nu}{1}{1}} & \X{\phi}{2}{1}e^{-\X{\nu}{2}{1}} & -\X{\phi}{1}{2}e^{-\X{\nu}{1}{2}} & -\X{\phi}{2}{2}e^{-\X{\nu}{2}{2}} & 0 & 0 & 0 & 0 \vspace{0.15cm}\\
0 & 0 & 0 & 0 & \X{\phi}{1}{3}e^{\X{\nu}{1}{3}} & \X{\phi}{2}{3}e^{\X{\nu}{2}{3}} & -\X{\phi}{1}{4}e^{\X{\nu}{1}{4}} & -\X{\phi}{2}{4}e^{\X{\nu}{2}{4}} \vspace{0.15cm}\\
v_1\X{\phi}{1}{1}e^{-2\X{\nu}{1}{1}} & v_1\X{\phi}{2}{1}e^{-2\X{\nu}{2}{1}} &0 & 0 & 0 & 0 & -v_4\X{\phi}{1}{4}e^{2\X{\nu}{1}{4}} & -v_4\X{\phi}{2}{4}e^{2\X{\nu}{2}{4}}\vspace{0.15cm} \\
0 & 0 & v_2\X{\phi}{1}{2} & v_2\X{\phi}{2}{2} & -v_3\X{\phi}{1}{3} & -v_3\X{\phi}{2}{3} & 0 & 0
\end{pmatrix}$$
\normalsize
where $\X{\nu}{j}{i}=\X{\nu}{j}{i}(\lambda)$ is defined in \eqref{eq:nu}, and $\X{\phi}{j}{i}=\X{\phi}{j}{i}(\lambda)$ is defined in \eqref{eq:phiji}.

As these are the coefficients of an eigenfunction, this happens to be a consistent undetermined linear system with one degree of freedom. So, stating $\X{C}{1}{1} =1$ (for instance), we can obtain explicit and determined values of the rest of the variables.

In the case study, we have implemented it in \verb"Matlab" to compute the coefficients of the eigenfunction corresponding to the dominant eigenvalue $\lambda_0$, with the following result:
\begin{align*}
\X{C}{1}{1} &= 1  & \X{C}{2}{1} &= 0.026142  \nonumber  \\
\X{C}{1}{2} &= 0.008778 + 0.021136\,i  & \X{C}{2}{2} &= 0.008778 - 0.021136\,i \nonumber  \\
\X{C}{1}{3} &= 1.054202\cdot 10^{-4}   & \X{C}{2}{3} &= 0.017451 \nonumber  \\
\X{C}{1}{4} &= -0.032257 - 0.018820\,i  & \X{C}{2}{4} &= -0.032257 + 0.018820\,i
\end{align*}
The plot of this eigenfunction is given in Fig. \ref{first} (left figure).

Analogously, we can find the explicit values of the coefficients $\Xe{C}{j}{i}$, $j=1,2$, $i=1,2,3,4$ of the eigenfunction of the adjoint problem \eqref{eqnadj}-\eqref{bcnadj}, which, analogously to the obtention of \eqref{eq:fupA}, we can see that has the form
\begin{equation}\label{eq:fupAadj}
\begin{pmatrix} c_i^*(\lambda,x) \\ q_i^*(\lambda,x) \end{pmatrix} = \Xe{C}{1}{i}\begin{pmatrix} \lambda + R - \X{\nu}{1}{i}(\lambda) \\ RP \end{pmatrix} e^{-\X{\nu}{1}{i}(\lambda)x} + \Xe{C}{2}{i}\begin{pmatrix} \lambda + R - \X{\nu}{2}{i}(\lambda) \\ RP \end{pmatrix} e^{-\X{\nu}{2}{i}(\lambda)x}
\end{equation}
for $i = 1, 2, 3, 4$. In this case, we need to impose the eight boundary conditions \eqref{bcnadj}, obtaining an 8-by-8 homogeneous linear system again:
\begin{equation}\label{eq:sistCijadj}
M^*\, C^*=N.
\end{equation}
Now,
$$M^*=$$
\tiny
$$
\begin{pmatrix}
e^{\X{\nu}{1}{1}} & e^{\X{\nu}{2}{1}} & -e^{\X{\nu}{1}{2}} & -e^{\X{\nu}{2}{2}} & 0 & 0 & 0 & 0 \vspace{0.15cm}\\
0 & 0 & 1 & 1 & -1 & -1 & 0 & 0\vspace{0.15cm}\\
0 & 0 & 0 & 0 & e^{-\X{\nu}{1}{3}} & e^{-\X{\nu}{2}{3}} & -e^{-\X{\nu}{1}{4}} & -e^{-\X{\nu}{2}{4}}\vspace{0.15cm} \\
-e^{2\X{\nu}{1}{1}} & -e^{2\X{\nu}{2}{1}} &0 & 0 & 0 & 0 & e^{-2\X{\nu}{1}{4}} & e^{-2\X{\nu}{2}{4}}\vspace{0.15cm}\\
0 & 0 & \X{\phi}{1}{2} & \X{\phi}{2}{2} & -\X{\phi}{1}{3} & -\X{\phi}{2}{3} & 0 & 0\vspace{0.15cm}\\
\X{\phi}{1}{1}e^{2\X{\nu}{1}{1}} & \X{\phi}{2}{1}e^{2\X{\nu}{2}{1}} &0 & 0 & 0 & 0 & -\X{\phi}{1}{4}e^{-2\X{\nu}{1}{4}} & -\X{\phi}{2}{4}e^{-2\X{\nu}{2}{4}}\vspace{0.15cm}\\
v_1\X{\phi}{1}{1}e^{\X{\nu}{1}{1}} & v_1\X{\phi}{2}{1}e^{\X{\nu}{2}{1}} & -v_2\X{\phi}{1}{2}e^{\X{\nu}{1}{2}} & -v_2\X{\phi}{2}{2}e^{\X{\nu}{2}{2}} & 0 & 0 & 0 & 0\vspace{0.15cm}\\
0 & 0 & 0 & 0 & v_3\X{\phi}{1}{3}e^{-\X{\nu}{1}{3}} & v_3\X{\phi}{2}{3}e^{-\X{\nu}{2}{3}} & -v_4\X{\phi}{1}{4}e^{-\X{\nu}{1}{4}} & -v_4\X{\phi}{2}{4}e^{-\X{\nu}{2}{4}}
\end{pmatrix}
$$
\normalsize
with $C^*=\left( \Xe{C}{1}{1},\, \Xe{C}{2}{1},\, \Xe{C}{1}{2},\, \Xe{C}{2}{2},\, \Xe{C}{1}{3},\, \Xe{C}{2}{3},\,\Xe{C}{1}{4},\, \Xe{C}{2}{4} \right)^T$, the same $N=\left( 0,0,0,0,0,0,0,0 \right)^T$, and where $\X{\nu}{j}{i}=\X{\nu}{j}{i}(\lambda)$ and $\X{\phi}{j}{i}=\X{\phi}{j}{i}(\lambda)$, and are defined in \eqref{eq:nu} and \eqref{eq:phiji}, respectively.

As before, this is a consistent undetermined linear system with one degree of freedom. So, we can impose, for instance, $\Xe{C}{1}{1} =1$ and then obtain explicit and determined values of the rest of the variables.

In the case study and for the eigenfunction of the adjoint problem corresponding to the dominant eigenvalue $\lambda_0$, we have the following result:
\begin{align*}
\Xe{C}{1}{1} &= 1, &\Xe{C}{2}{1} &= 2.765583\cdot 10^4 \nonumber \\
\Xe{C}{1}{2} &= (4.431220 - 2.822988\,i)\cdot 10^4, & \Xe{C}{2}{2} &= (4.431220 + 2.822988\,i)\cdot 10^4\nonumber \\
\Xe{C}{1}{3} &= 2.572854\cdot 10^4,  &\Xe{C}{2}{3} &= 6.289587\cdot 10^4\nonumber \\
\Xe{C}{1}{4} &= (-3.273263 - 0.214582\,i)\cdot 10^4,&\Xe{C}{2}{4} &= (-3.273263 + 0.214582\,i)\cdot 10^4
\end{align*}
The plot of this eigenfunction is given in Fig. \ref{first} (right figure).

\subsection{Sensitivity analysis: explicit calculations}\label{sec:derivcasestudy}
In this section, we are going to give the explicit formulas and key points to implement the explicit calculation of the derivatives of an eigenvalue $\lambda$ with respect to the six parameters of the present problem. That is, formulas \eqref{lapri1}, \eqref{lapriR} and \eqref{lapriP}. Recall that $(c,q)$, $(c^*,q^*)$ are the eigenfunction of $A$ and $A^*$, respectively, corresponding to the eigenvalue $\lambda$ and $\overline{\lambda}$. Their formulas are given in \eqref{eq:fupA} and \eqref{eq:fupAadj}, and their coefficients can be computed solving systems \eqref{eq:sistCij} and \eqref{eq:sistCijadj}, (see Section \ref{sec:Cji} for more details). More compactly, we can write these eigenfunctions for $j=1,2$, $i=1,2,3,4$ as:
\begin{align*}
c_i(\lambda,x) &= \X{D}{1}{i}(\lambda) e^{\X{\nu}{1}{i}(\lambda) x} + \X{D}{2}{i}(\lambda) e^{\X{\nu}{2}{i}(\lambda) x} \qquad &c_i^*(\lambda,x) &= \Xe{D}{1}{i}(\lambda) e^{\X{\nu}{1}{i}(\lambda) x} + \Xe{D}{2}{i}(\lambda) e^{\X{\nu}{2}{i}(\lambda) x}\\
q_i(\lambda,x) &= \X{E}{1}{i}(\lambda) e^{\X{\nu}{1}{i}(\lambda) x} + \X{E}{2}{i}(\lambda) e^{\X{\nu}{2}{i}(\lambda) x} \qquad &q_i^*(\lambda,x) &= \Xe{E}{1}{i}(\lambda) e^{\X{\nu}{1}{i}(\lambda) x} + \Xe{E}{2}{i}(\lambda) e^{\X{\nu}{2}{i}(\lambda) x}
\end{align*}
where
\begin{align*}
\X{D}{j}{i}(\lambda) &= \X{C}{j}{i} \left(\lambda+R-\X{\nu}{j}{i}(\lambda)\right) \qquad &\Xe{D}{j}{i}(\lambda) &= \Xe{C}{j}{i} \left(\lambda+R-\X{\nu}{j}{i}(\lambda)\right)\\
\X{E}{j}{i}(\lambda) &= \X{C}{j}{i} RP \qquad &\Xe{E}{j}{i}(\lambda) &= \Xe{C}{j}{i}RP.
\end{align*}
For the rest of the section, we are going to drop the dependence on $\lambda$ to simplify the notation.

The key point to write the derivative formulas in an even more explicit way (easier to implement and calculate) is that everything is explicit, and the integrals that have to be computed are integrals of exponential functions, which can be computed explicitly too. This is what we are going to do in the present section.

We start with $\partial \lambda/\partial v_k$, for a fixed $k\in\{1,2,3,4\}$, whose formula is given in \eqref{lapri1}.
\begin{enumerate}
\item Numerator's first term of \eqref{lapri1}:
\begin{enumerate}
\item If $k=1,3$:
    \begin{align*}
    &-c(x_k^+)\overline{c^*}(x_k^+) = \\
    &-\left( \X{D}{1}{k} \overline{\Xe{D}{1}{k}} e^{\left( \X{\nu}{1}{k}-\overline{\X{\nu}{1}{k}}\right) x_k} + \X{D}{2}{k} \overline{\Xe{D}{2}{k}} e^{\left( \X{\nu}{2}{k}-\overline{\X{\nu}{2}{k}}\right) x_k} +\X{D}{1}{k} \overline{\Xe{D}{2}{k}} e^{\left( \X{\nu}{1}{k}-\overline{\X{\nu}{2}{k}}\right) x_k} +\X{D}{2}{k} \overline{\Xe{D}{1}{k}} e^{\left( \X{\nu}{2}{k}-\overline{\X{\nu}{1}{k}}\right) x_k}  \right)
    \end{align*}
\item If $j=2,4$:
    \begin{align*}
     &c(x_{k+1}^-)\overline{c^*}(x_{k+1}^-) =  \\
     &\X{D}{1}{k} \overline{\Xe{D}{1}{k}} e^{\left( \X{\nu}{1}{k}-\overline{\X{\nu}{1}{k}}\right) x_{k+1}} + \X{D}{2}{k} \overline{\Xe{D}{2}{k}} e^{\left( \X{\nu}{2}{k}-\overline{\X{\nu}{2}{k}}\right) x_{k+1}} +\X{D}{1}{k} \overline{\Xe{D}{2}{k}} e^{\left( \X{\nu}{1}{k}-\overline{\X{\nu}{2}{k}}\right) x_{k+1}} +\X{D}{2}{k} \overline{\Xe{D}{1}{k}} e^{\left( \X{\nu}{2}{k}-\overline{\X{\nu}{1}{k}}\right) x_{k+1}}
     \end{align*}
\end{enumerate}
\item Numerator's second term of \eqref{lapri1}:
\small
    \begin{align*}
    & - \int_{x_k}^{x_{k+1}}c_x(s) \bar{c}^*(s)ds  =\\
    & -\int_{x_k}^{x_{k+1}}\left( \X{D}{1}{k} \overline{\Xe{D}{1}{k}}\X{\nu}{1}{k} e^{\left( \X{\nu}{1}{k}-\overline{\X{\nu}{1}{k}}\right) s} + \X{D}{2}{k} \overline{\Xe{D}{2}{k}}\X{\nu}{2}{k} e^{\left( \X{\nu}{2}{k}-\overline{\X{\nu}{2}{k}}\right) s} +\X{D}{1}{k} \overline{\Xe{D}{2}{k}}\X{\nu}{1}{k} e^{\left( \X{\nu}{1}{k}-\overline{\X{\nu}{2}{k}}\right) s} +\X{D}{2}{k} \overline{\Xe{D}{1}{k}}\X{\nu}{2}{k} e^{\left( \X{\nu}{2}{k}-\overline{\X{\nu}{1}{k}}\right) s}  \right)\,ds
    \end{align*}
\normalsize
\item Denominator of \eqref{lapri1}:
    \begin{align*}
    & \int_{-2}^{2} (c(s) \overline{c^*}(s)+q(s) \overline{q^*}(s)) ds =\\
    & =\sum_{i=1}^4 \int_{x_i}^{x_{i+1}} (c_i(s) \overline{c_i^{*}}(s)+q_i(s) \overline{q_i^{*}}(s)) ds \\
    & = \sum_{i=1}^4 \int_{x_i}^{x_{i+1}}\left( \X{D}{1}{i} \overline{\Xe{D}{1}{i}} e^{\left( \X{\nu}{1}{i}-\overline{\X{\nu}{1}{i}}\right) s} + \X{D}{2}{i} \overline{\Xe{D}{2}{i}} e^{\left( \X{\nu}{2}{i}-\overline{\X{\nu}{2}{i}}\right) s} +\X{D}{1}{i} \overline{\Xe{D}{2}{i}} e^{\left( \X{\nu}{1}{i}-\overline{\X{\nu}{2}{i}}\right) s} +\X{D}{2}{i} \overline{\Xe{D}{1}{i}} e^{\left( \X{\nu}{2}{i}-\overline{\X{\nu}{1}{i}}\right) s}  \right)  \\
    & + \left( \X{E}{1}{i} \overline{\Xe{E}{1}{i}} e^{\left( \X{\nu}{1}{i}-\overline{\X{\nu}{1}{i}}\right) s} + \X{E}{2}{i} \overline{\Xe{E}{2}{i}} e^{\left( \X{\nu}{2}{i}-\overline{\X{\nu}{2}{i}}\right) s} +\X{E}{1}{i} \overline{\Xe{E}{2}{i}} e^{\left( \X{\nu}{1}{i}-\overline{\X{\nu}{2}{i}}\right) s} +\X{E}{2}{i} \overline{\Xe{E}{1}{i}} e^{\left( \X{\nu}{2}{i}-\overline{\X{\nu}{1}{i}}\right) s}  \right)\,ds    \end{align*}
\end{enumerate}
We have not calculated the results of the integrals because, as we will see, it depends on whether the difference of the $\nu$'s in the exponents is zero or not. For instance, in the particular situation of the case study given in Section \ref{case}, for $\lambda=\lambda_0$ (the dominant eigenvalue) we have
\begin{align*}
  &\X{\nu}{1}{1} = 4.940517817243650 + 0.000000000000000i,\  &\X{\nu}{2}{1} &= 0.540070499574931 + 0.000000000000000i\\
  &\X{\nu}{1}{2} = 0.468997654117160 + 1.850683964017590i,\  &\X{\nu}{2}{2} &=  0.468997654117160 - 1.850683964017590i \\
  &\X{\nu}{1}{3} = 3.876353945879901 + 0.000000000000000i,\  &\X{\nu}{2}{3} &=  0.736469716390774 + 0.000000000000000i \\
  &\X{\nu}{1}{4} = -0.361964481599474 + 1.967567941047238i,\ &\X{\nu}{2}{4} &=  -0.361964481599474 - 1.967567941047238i. \\
\end{align*}
This means that
\[
\X{\nu}{1}{k}-\overline{\X{\nu}{1}{k}} =
\begin{cases}
0, & \text{if } k=1,3, \\
2 \Im(\X{\nu}{1}{k}) i,  & \text{if } k=2,4.
\end{cases},\qquad \X{\nu}{1}{k}-\overline{\X{\nu}{2}{k}} =
\begin{cases}
\X{\nu}{1}{k}-\X{\nu}{2}{k} \neq 0, & \text{if } k=1,3, \\
0,  & \text{if } k=2,4.
\end{cases}
\]
So, in the case study, the above formulas for the three terms in $\partial \lambda/\partial v_k$ (for a fixed $k$) turn into:
\begin{enumerate}
\item Numerator's first term:
    \begin{enumerate}
    \item If $k=1,3$:
    $$-c(x_k^+)\overline{c^*}(x_k^+) = -\left( \X{D}{1}{k} \overline{\Xe{D}{1}{k}}  + \X{D}{2}{k} \overline{\Xe{D}{2}{k}}  +\X{D}{1}{k} \overline{\Xe{D}{2}{k}} e^{\left( \X{\nu}{1}{k}-\overline{\X{\nu}{2}{k}}\right) x_k} +\X{D}{2}{k} \overline{\Xe{D}{1}{k}} e^{\left( \X{\nu}{2}{k}-\overline{\X{\nu}{1}{k}}\right) x_k}  \right)$$
    \item If $k=2,4$:
     $$c(x_{k+1}^-)\overline{c^*}(x_{k+1}^-) =  \X{D}{1}{k} \overline{\Xe{D}{1}{k}} e^{ 2\Im(\X{\nu}{1}{k}) i\, x_{k+1}} + \X{D}{2}{k} \overline{\Xe{D}{2}{k}} e^{2\Im(\X{\nu}{2}{k}) i\, x_{k+1}} +\X{D}{1}{k} \overline{\Xe{D}{2}{k}}  +\X{D}{2}{k} \overline{\Xe{D}{1}{k}}   $$
   \end{enumerate}
\item Numerator's second term:
    \begin{enumerate}
    \item If $k=1,3$:
    \begin{align*}
     &- \int_{x_k}^{x_{k+1}}c_x(s) \bar{c}^*(s)ds =
     - \X{D}{1}{k} \overline{\Xe{D}{1}{k}}\X{\nu}{1}{k}  - \X{D}{2}{k} \overline{\Xe{D}{2}{k}}\X{\nu}{2}{k}\\
    & -\dfrac{\X{D}{1}{k} \overline{\Xe{D}{2}{k}}\X{\nu}{1}{k}}{\X{\nu}{1}{k}-\overline{\X{\nu}{2}{k}}} \left(e^{\left( \X{\nu}{1}{k}-\overline{\X{\nu}{2}{k}}\right) x_{k+1}} -e^{\left( \X{\nu}{1}{k}-\overline{\X{\nu}{2}{k}}\right) x_{k}}\right)
    -\dfrac{\X{D}{2}{k} \overline{\Xe{D}{1}{k}}\X{\nu}{2}{k}}{\X{\nu}{2}{k}-\overline{\X{\nu}{1}{k}}} \left(e^{\left( \X{\nu}{2}{k}-\overline{\X{\nu}{1}{k}}\right) x_{k+1}}-e^{\left( \X{\nu}{2}{k}-\overline{\X{\nu}{1}{k}}\right) x_k}\right)
    \end{align*}
    \item If $k=2,4$:
    \begin{align*}
    & - \int_{x_k}^{x_{k+1}}c_x(s) \bar{c}^*(s)ds =
      - \dfrac{\X{D}{1}{k} \overline{\Xe{D}{1}{k}}\X{\nu}{1}{k}}{2\Im(\X{\nu}{1}{k}) i} \left(e^{2\Im(\X{\nu}{1}{k}) i\,x_{k+1}}-e^{2\Im(\X{\nu}{1}{k}) i\,x_{k}}  \right)\\
        &-\dfrac{\X{D}{2}{k} \overline{\Xe{D}{2}{k}}\X{\nu}{2}{k}}{2\Im(\X{\nu}{2}{k}) i}\left( e^{2\Im(\X{\nu}{1}{k}) i\, x_{k+1}}-e^{2\Im(\X{\nu}{1}{k}) i\, x_{k}} \right)
        -\X{D}{1}{k} \overline{\Xe{D}{2}{k}}\X{\nu}{1}{k}  -\X{D}{2}{k} \overline{\Xe{D}{1}{k}}\X{\nu}{2}{k}
    \end{align*}
   \end{enumerate}
\item Denominator. We have the sum of eight terms: we give the explicit formula for each of them, depending on the parity of the subindex.
   \begin{enumerate}
   \item When $i=1,3$:
    \begin{align*}
     \int_{x_i}^{x_{i+1}} (c_i(s) \overline{c_i^{*}}(s)) ds =
     &\  \X{D}{1}{i} \overline{\Xe{D}{1}{i}} + \X{D}{2}{i} \overline{\Xe{D}{2}{i}} +
     \dfrac{\X{D}{1}{i} \overline{\Xe{D}{2}{i}}}{\X{\nu}{1}{i}-\overline{\X{\nu}{2}{i}}}\left( e^{\left( \X{\nu}{1}{i}-\overline{\X{\nu}{2}{i}}\right) x_{i+1}}  -e^{\left( \X{\nu}{1}{i}-\overline{\X{\nu}{2}{i}}\right) x_{i}}  \right) \\
     &+    \dfrac{\X{D}{2}{i} \overline{\Xe{D}{1}{i}}}{\X{\nu}{2}{i}-\overline{\X{\nu}{1}{i}}}\left( e^{\left( \X{\nu}{2}{i}-\overline{\X{\nu}{1}{i}}\right) x_{i+1}}  -e^{\left( \X{\nu}{2}{i}-\overline{\X{\nu}{1}{i}}\right) x_{i}}  \right)
    \end{align*}
    and the same formula for $\int_{x_i}^{x_{i+1}} (q_i(s) \overline{q_i^{*}}(s)) ds$, but with $\X{E}{j}{i}, \Xe{E}{j}{i}$ instead of $\X{D}{j}{i}, \Xe{D}{j}{i}$
   \item When $i=2,4$:
   \begin{align*}
     \int_{x_i}^{x_{i+1}} (c_i(s) \overline{c^{i,*}}(s)) ds =
    &\ \ \dfrac{\X{D}{1}{i} \overline{\Xe{D}{1}{i}}}{2\Im(\X{\nu}{1}{i})i} \left( e^{ 2\Im(\X{\nu}{1}{i})i\, x_{i+1}}  -e^{2\Im(\X{\nu}{1}{i})i\, x_{i}}\right) \\
    &+\dfrac{\X{D}{2}{i} \overline{\Xe{D}{2}{i}}}{2\Im(\X{\nu}{2}{i})i} \left( e^{ 2\Im(\X{\nu}{2}{i})i\, x_{i+1}}  -e^{2\Im(\X{\nu}{2}{i})i\, x_{i}}\right)
    + \X{D}{1}{i} \overline{\Xe{D}{2}{i}} +\X{D}{2}{i} \overline{\Xe{D}{1}{i}}
   \end{align*}
     and the same formula for $\int_{x_i}^{x_{i+1}} (q_i(s) \overline{q_i^{*}}(s)) ds$, but with $\X{E}{j}{i}, \Xe{E}{j}{i}$ instead of $\X{D}{j}{i}, \Xe{D}{j}{i}$
  \end{enumerate}
  \end{enumerate}
Implementing the previous formulas in \verb"Matlab", we can compute the values of the derivatives of $\lambda_0$ (the dominant eigenvalue) in the case study given in formula \eqref{eq:dlamdvi}.
 If we are in other situations other than the case study, the same procedure can be applied and, hence, similar formulas can be derived, depending on the values of $\X{\nu}{j}{i}$ in each interval.

The same ideas can be applied in the rest of the derivatives $\partial \lambda/\partial R$, and $\partial \lambda/\partial P$.

\section{Appendix: asymptotic behaviour of $\Delta(\lambda)$}\label{sec:appendix2}
In this section, we present the computations used in Section~\ref{sec:delta} to determine the asymptotic behaviour of $\Delta(\lambda)$ for $\lambda\in\mathbb{R}$ as $|\lambda|\to\infty$.

In the definition of $\Delta(\lambda)$ (see \eqref{eq:f}) we need both $\det(C(\lambda))$ and ${\rm trace}(C(\lambda))$. Recall that $C(\lambda)$ is defined in \eqref{eq:C} and that the relevant expressions for $M_i(\lambda)$ to compute it when $|\lambda|\to\infty$ correspond to case~3 of Section~\ref{eq:Mi}, namely formula~\eqref{eq:Mi3}. In formula \eqref{eq:detC2}, we have an explicit expression for $\det(C(\lambda))$. However, we do not have an explicit formula for ${\rm trace}(C(\lambda))$, since the trace does not interact in a simple way with matrix products. Therefore, we need to derive an asymptotic expression for ${\rm trace}(C(\lambda))$ to compare it with formula \eqref{eq:detC2}.

Using the definition of each of the previous functions, we are interested in the dominant term (and its sign) of each of them, in terms of $\lambda\in\mathbb{R}$ as $|\lambda|\to\infty$. We are going to use the notation $O(1)$ and $O^+(1)$ given in Remark \ref{rem:Ogran}, that can be generalized to $O(\lambda^k)$ and $O^+(\lambda^k)$.

We start with $a_i(\lambda),b_i(\lambda)$, given in \eqref{eq:Mi3}. Using the asymptotic expansion of $\sqrt{1+x}$ for $x=|\lambda|^{-1}$ small, and taking the dominant terms, we can see that:
$$a_i(\lambda)=\dfrac{v_i-1}{2v_i}\lambda+O(1),\ \ b_i(\lambda)=\dfrac{v_i+1}{2v_i}|\lambda|+\dfrac{R(v_i+P^2)}{2v_i}\textrm{sign}(\lambda) -\dfrac{P^2R^2}{v_i+1}\textrm{sign}(\lambda)\,|\lambda|^{-1}+O\left(|\lambda|^{-2}\right).$$
Now, we use the exponential function properties, the asymptotic expansion of $1/(1+x)$ for $x=|\lambda|^{-1}$ small, and take the dominant terms, and we obtain:
$$\dfrac{e^{a_i(\lambda)}}{b_i(\lambda)}=O^+(1)\exp\left(\dfrac{v_i-1}{2v_i}\lambda\right)\,|\lambda|^{-1}.$$
Again, using exponential properties and taking the dominant terms of each part, we arrive at
$$
\sinh b_i(\lambda)=O^+(1)\left( \exp\left( \dfrac{v_i+1}{2v_i}|\lambda| \right) +o(1)  \right)
, \ \cosh b_i(\lambda) = O^+(1)\left( \exp\left( \dfrac{v_i+1}{2v_i}|\lambda| \right) +o(1)  \right)$$
Also, we have $\phi_{a_i}(\lambda)$ in \eqref{eq:Mi3}, that we can write as
$$\phi_{a_i}(\lambda)=\dfrac{v_i+1}{2v_i}\lambda+O^+(1).$$

Now, we want to compute the dominant terms of $M_i$, which can be written as
 $$M_i(\lambda)=O^+(1)\exp\left(\dfrac{v_i-1}{2v_i}\lambda\right)\left( \exp\left( \dfrac{v_i+1}{2v_i}|\lambda| \right) +o(1)  \right)
 \begin{pmatrix} \mu_{11}^i&\mu_{12}^i\\\mu_{21}^i&\mu_{22}^i\end{pmatrix}|\lambda|^{-1}$$
with
\[
\begin{array}{ccc}
\mu_{11} =
\begin{cases}
-\dfrac{v_i+1}{v_i}\lambda - O^+(1), \ \textrm{ if }\lambda<0\\[0.75em]
-\dfrac{P^2R^2}{v_i+1}\lambda^{-1} + O\left( |\lambda|^{-2}\right), \ \textrm{ if }  \lambda>0
\end{cases}\hspace{-0.3cm},
&
\begin{array}{c}
\mu_{12}=O^+(1)\\[0.7em]
\mu_{21}=-O^+(1)
\end{array},
&
\mu_{22} =
\begin{cases}
\dfrac{P^2R^2}{v_i+1}\lambda^{-1} + O\left( |\lambda^{-2}|\right), \ \textrm{ if } \lambda<0\\[0.75em]
\dfrac{v_i+1}{v_i}\lambda + O^+(1), \ \textrm{ if }  \lambda>0.
\end{cases}
\end{array}
\]
This can be summarised as:
\[
M_i(\lambda) =
O^+(1)\exp\left(\dfrac{v_i-1}{2v_i}\lambda\right)\exp\left(\dfrac{v_i+1}{2v_i}|\lambda| + o(1)\right)|\lambda|^{-1}\,M_i'(\lambda)
\]
where
\[
M_i'(\lambda) =
\begin{cases}
 \qquad\begin{pmatrix} a_i|\lambda|-O^+(1)& O^+(1)\\ -O^+(1) & -b_i|\lambda|^{-1}+ O(|\lambda|^{-2})\end{pmatrix} &\textrm{if }\lambda<0,\\[1.25em]
 \qquad\begin{pmatrix} -c_i |\lambda|^{-1} +O(|\lambda|^{-2})& O^+(1)\\ -O^+(1) & d_i |\lambda| +O^+(1)\end{pmatrix}  &\textrm{if }\lambda>0
\end{cases}
\]
with $a_i,b_i, c_i,d_i>0$ in both cases.
Then one can compute the matrix $C(\lambda)$ as in \eqref{simpleC}, which is the product of the scalar factor
$$\Pi_{i=1}^4 O^+(1)\exp\left(\dfrac{v_i-1}{2v_i}\lambda\right) \exp\left(\dfrac{v_i+1}{2v_i}|\lambda| + o(1)\right) |\lambda|^{-1}$$
times the matrix product
$$
M_1'(\lambda)\cdot
D_{4,1}\cdot
M_2'(\lambda)\cdot M_3'(\lambda)\cdot D_{2,3}\cdot
M_4'(\lambda),$$
where the matrices $D_{i,j}$ are simply
$$
D_{i,j}=\begin{pmatrix} v_i/v_j& 0 \\0&1\end{pmatrix}.
 $$

With the help of a symbolic algebraic manipulator, we obtain the dominant terms of $C(\lambda)$, when $|\lambda|\to\infty$:
$$
 C(\lambda)=
 \begin{cases}
  O^+(1) \exp\left(\left(\sum_{i=1}^4\dfrac{1}{v_i}\right)|\lambda| +o(1)\right)|\lambda|^{-4}
 \begin{pmatrix} (\prod_{i=1}^{4}a_i) \frac{v_2 v_4}{v_1 v_3} |\lambda|^4+ O(|\lambda|^3)& O(|\lambda|^3)\\  O(|\lambda|^3) & O(|\lambda|^2)\end{pmatrix},\ \ &\textrm{ if } \lambda<0\\[2.75em]
 O^+(1)\exp\left(4\lambda +o(1)\right)|\lambda|^{-4}
 \begin{pmatrix} O(|\lambda|^2)& O(|\lambda|^3)\\ O(|\lambda|^3) & (\prod_{i=1}^{4}d_i) \frac{v_2 v_4}{v_1 v_3} |\lambda|^4+O(|\lambda|^3)\end{pmatrix},\ \  &\textrm{ if } \lambda>0
 \end{cases}
$$
with $a_i= d_i=\dfrac{v_i+1}{v_i}$. So,
 \begin{equation}\label{eq:traceinfinity}
 {\rm trace}\left(C(\lambda)\right) = \begin{cases} O^+(1) \exp\left(\left(\sum_{i=1}^4\dfrac{1}{v_i}\right)|\lambda|+o(1)\right)\left(1+O(|\lambda|^{-1})\right) &\textrm{ if } \lambda<0 \\[10pt]
  O^+(1) \exp\left(4\lambda +o(1)\right)\left(1+O(|\lambda|^{-2})\right) &\textrm{ if } \lambda>0\end{cases}
 \end{equation}
This formula is used in Section~\ref{sec:delta} to give the asymptotic behaviour of $\Delta(\lambda)$ for $\lambda\in\mathbb{R}$ as $|\lambda|\to\infty$.

\section{Appendix: the numerical method for the evolution problem}\label{sec:appendix3}
The numerical method used for simulating the countercurrent chromatography, that is, the evolution problem given in \eqref{eqn0t}-\eqref{bcn0t}, is inspired by the one used in \cite{Menacho2011, Menacho2013}. More concretely, it is a sort of volume-of-fluid method combined with the Operator Splitting Method (see W.H. Press et al. \cite{Pressetal}): after each of the four zones has been partitioned in $N_x$ elements of length $\Delta x$, the two simultaneous phenomena involved (advection and interphase mass transfer) are split into two successive calculations, for each time-step $\Delta t$, according to the following scheme.

\begin{enumerate}
\item \textbf{Advection.} First, we calculate the guess-values of the concentration $(\hat{c}, \hat{q})^T$ as the solution of the advection part of problem \eqref{eqn0t}-\eqref{bcn0t}, that is
    \begin{equation*}
    \begin{cases}
        \hat{c}_t+v_i \hat{c}_x=0 \\
        \hat{q}_t-\hat{q}_x=0.
    \end{cases}
    \end{equation*}
    So, we compute $(\hat{c}, \hat{q})^T$ in every element after just a displacement 
    as
    \begin{equation}\label{metnum01}
    \begin{cases}
        \hat{c}_{i,j,k+1}=c_{i,j,k} -v_i p\left(c_{i,j,k}-c_{i,j-1,k}\right) \\
        \hat{q}_{i,j,k+1}=q_{i,j,k} + p\left(q_{i,j+1,k}-q_{i,j,k}\right)
    \end{cases}
    \end{equation}
where the subscripts mean the zone $(i=1,2,3,4)$, the element in each zone $(j=0,1,2 ... N_x)$ and the time-step $(k=0,1,2...N_t)$. The velocity of the solid phase is $v_s=1$, and that of the liquid in the $i$-th zone is $v_i$. The parameter $p$ controls the time-step duration, and must be chosen in the interval
\begin{equation*}
p\in \left(0,\ (\max_i\left\{v_i,v_{s}\right\})^{-1}\right)=\left(0,\ (\max_i\left\{v_i,1\right\})^{-1}\right)
\end{equation*}
implying the corresponding time-step to be
\begin{equation*}
\Delta t=\frac{p}{v_s} \Delta x =p \Delta x,
\end{equation*}
in order to satisfy the Courant-Friedrichs-Lewy condition.
Note that, to apply \eqref{metnum01}, one must take into account the following equivalences, based on the boundary (inter-zone) conditions \eqref{bcn0t}:
\begin{equation*}
\begin{cases}c_{i+1,0,k}:=c_{i,N_x,k},\ \  i=1,3 \\
c_{3,0,k}:=\frac{v_2}{v_3} c_{2,N_x,k}\\
c_{1,0,k}:= \frac{v_4}{v_1} c_{4,N_x,k} \\
q_{i,N_x+1,k}:=q_{i+1,1,k},\ \  i=1,2,3 \\
q_{4,N_x+1,k}:=q_{1,1,k}.
\end{cases}
\end{equation*}
\item \textbf{Interphase mass transfer.} Second, using the guess values $(\hat{c}_{i,j,k+1}, \hat{q}_{i,j,k+1})^T$ obtained in the previous step, we compute the mass transfer in each element to obtain the corrected values $(c_{i,j,k+1}, q_{i,j,k+1})^T$. This calculation only takes into account the mass transfer phenomenon of problem \eqref{eqn0t}-\eqref{bcn0t}, that is:
\begin{equation*}
\begin{cases}
c_t=-RP(Pc-q)\\
q_t=R(Pc-q)
\end{cases}
\end{equation*}
We can solve the previous ODE system explicitly. Taking the guess-value  $(\hat{c}_{i,j,k+1}, \hat{q}_{i,j,k+1})^T$ calculated with \eqref{metnum01} as the initial value, one gets the value for $t=t_{k+1}$ as
\begin{equation*}
\begin{pmatrix}
 c_{i,j,k+1} \\ q_{i,j,k+1}\end{pmatrix} =\frac{1}{1+P^2} \begin{pmatrix}1+P^2 e^{-R(P^2+1)\Delta t} &P (1+ e^{-R(P^2+1)\Delta t}) \\
P(1-e^{-R(P^2+1)\Delta t}) & P^2+e^{-R(P^2+1)\Delta t}  \end{pmatrix}
\begin{pmatrix}\hat{c}_{i,j,k+1} \\ \hat{q}_{i,j,k+1} \end{pmatrix}
\end{equation*}
\end{enumerate}

\section{Acknowledgements}
M. Pellicer and J. Solà-Morales are supported by the Spanish grant PID2021-123903NB-I00 funded by \\ MCIN/AEI/10.13039/501100011033 and by ERDF ``A way of making Europe''.

\end{document}